\newtheorem{theorem}{Theorem}[section]
\newtheorem{lemma}[theorem]{Lemma}
\newtheorem{prop}[theorem]{Proposition}
\newtheorem{defn}[theorem]{Definition}
\newtheorem{ass}[theorem]{Assumption}
\newtheorem{remark}[theorem]{Remark}
\newtheorem{cor}[theorem]{Corollary}
\newcommand{\norm}[1]{\left\lVert #1 \right \rVert}
\newcommand{\inner}[2]{\left\langle #1 , #2  \right\rangle}
\title{Mean-field optimal control with stochastic leaders}
\author[a,b]{Sebastian Zimper}
\author[c,b]{Ana Djurdjevac}
\author[d]{Carsten Hartmann}
\author[a,b]{Christof Schütte}
\author[a]{Nata\v sa Djurdjevac Conrad}
\affil[a]{Zuse Institute Berlin, Germany}
\affil[b]{Institut f\"ur Mathematik und Informatik, Freie Universit\"at Berlin, Berlin, Germany}
\affil[c]{University of Oxford, Mathematical Institute, Woodstock Road, Oxford OX2 6GG, UK}
\affil[d]{Brandenburgische Technische Universit\"at Cottbus-Senftenberg, Cottbus, Germany}
\begin{document}
\maketitle

\begin{abstract}
We consider interacting agent systems with a large number of stochastic agents influenced by a fixed number of external stochastic lead agents. Such settings arise, for example in models of opinion dynamics, where a small number of leaders can steer the behaviour of a large population of followers. In this context, we study a partial mean-field limit where the number of followers tends to infinity, while the number of leaders stays constant. The partial mean-field limit dynamics is then given by a McKean-Vlasov stochastic differential equation (SDE) for the followers, coupled to a controlled It\^o-SDE governing the dynamics of the lead agents. For a given cost functional that the lead agents seek to minimise, we show that the unique optimal control of the finite agent system converges to the optimal control of the limiting system. This establishes that the low-dimensional control of the partial (mean-field) system provides an effective approximation for controlling the high-dimensional finite agent system. In addition, we propose a stochastic gradient descent algorithm that can efficiently approximate the mean-field control. Our theoretical results are illustrated on opinion dynamics model with lead agents, where the control objective is to drive the followers to reach consensus.       
\end{abstract}

\section[]{Introduction}
\label{sec:Introduction}

Interacting agent and interacting particle models appear in a large variety of contexts, from classical examples in statistical physics such as the voter model \cite{holley1975ergodic} to more recent applications that describe collective motion in biological systems \cite{Vicsek2012}, 
systemic risk in  financial systems \cite{Garnier2013} or human behaviour in social systems \cite{Naldi2010,Djurdjevac2022}. From a mathematical perspective, interacting agent (or particle) systems are often studied in the context of mean-field theory, assuming that the agents (or particles) are indistinguishable. Under this assumption, the interaction between any agent (or particle) with the rest of the population can be described by an average interaction (``mean-field''), which is realised in the limit of infinitely many agents (particles). For an overview of this topic, we refer to the seminal work by Sznitman \cite{SznitmanTopics}. In many applications, it is desirable to control these systems such that some given objective is reached, e.g. in crowd control \cite{hughes2003flow} or in consensus-based optimisation \cite{pinnau2017consensus}. When the control is acting on only a few selected agents (or particles), which in the language of control theory corresponds to an underactuated control system, the assumption that all agents (particles) are indistinguishable is no longer valid. Nonetheless, it is still possible to consider the collection of uncontrolled agents (particles) as indistinguishable and treat them within a partial mean-field framework \cite{Fornasier2014,Helfmann2023}.

In this paper, we will follow the strategy of such partial mean-field limits, that give rise to sparse optimal controls by keeping the number of controlled agents (or particles)  constant, while letting the number of uncontrolled particles tend to infinity.  Specifically, we consider agent-based systems that consist of weakly interacting diffusion processes driven by uncorrelated Gaussian white noise acting on all agents, where only some of the agents are controlled. 
Throughout this paper, we will refer to the agents on which a control is exerted as \textit{leaders} and the remaining agents as \textit{followers}. 

The goal of our paper is two-fold: Firstly, we want to characterise the limit dynamics (including the optimal cost and control) as the number of followers goes to infinity, but the number of lead agents is kept constant, adopting results from \cite{Ascione2023,CarmonaZhu2016}. Secondly, we want to devise efficient numerical strategies to solve the corresponding optimal control problem for both finite and infinite agent populations, based on a stochastic gradient descent algorithm that exploits ideas put forward in \cite{Hartmann_2012,Lie2021}.  
Though closely related, our case differs from the situation of differential games in which all agents are controlled, but with possibly competing control objectives  \cite{Carmona2018}. Here the limiting problem is either a mean-field game or a feedback-controlled McKean-Vlasov SDE, depending on whether the agents compete against one another, or cooperate with each other \cite{Carmona2012}. While we consider stochastic control problems rather than games, our situation resembles mean-field games that comprise major and minor players \cite{CarmonaZhu2016}, with a few major players having the role of the lead agents and a large population of minor players being the followers. 

This research touches on ethically sensitive questions of influence in social systems, particularly regarding manipulation in digital communication spaces. Its results aim to provide a foundation for detecting, understanding, and counteracting attempts to manipulate public opinion and to support responsible governance and ethical use of data-driven social modelling.

\paragraph{Literature review and our contribution}

Results on propagation of chaos and mean-field limits for controlled or uncontrolled stochastic particle systems in the absence of leaders are by now classical; see e.g.  \cite{SznitmanTopics,Chaintron2022a,Chaintron2022b} and references therein. The limiting system is a so called McKean-Vlasov SDE, an SDE with coefficients which depend on the law of the process; for optimal control problems, the resulting McKean-Vlasov SDE is usually of feedback form where the control policy in general depends on the law of the process. Optimisation tasks of this form are sometimes referred to as mean-field optimal control problems, and we refer to  \cite{Bensoussan2013,Lauriere2014} for  formulations based on a dynamic programming principle or to \cite{Andersson2010,Carmona2015FBSDE} for the dual formulations in terms of a stochastic maximum principle. For such problems, it is not obvious that the optimisation step commutes with the mean-field limit, and to our knowledge the first paper that provides sufficient conditions under which the sequence of optimal controls for the finite particle system converges to the solution of the controlled McKean-Vlasov SDE is \cite{Lacker2017};  in \cite{Djete2022}, this analysis was later on extended to the case when common noise is present in the system dynamics.    

The majority of the literature on this topic is in the case when the control acts on every particle, i.e., in the absence of leaders. When leaders are present it was shown in \cite{Ascione2023} that if there is no noise in the equation for the leaders, then the sequence of optimal controls for the $N$ particle system $\Gamma$-converges to the optimal control of the limiting system as $N \to \infty$. A related setting involving infinitely many deterministic leaders was studied in \cite{Almi2023}. Propagation of chaos for stochastic lead dynamics under the assumptions that the drift coefficients are Lipschitz and bounded and the diffusion coefficients are constant was proved in \cite{CarmonaZhu2016}. Specifically, the results therein show that the optimal control for the limiting system gets arbitrarily close to minimising the cost functional of the finite system as $N \to \infty$. The proof is carried out for mean-field games with major and minor players, but the differential game set-up simplifies to a mean-field control problem for the lead agent when the minor players' strategies do not impact the dynamics; cf.~ \cite[Section 6]{CarmonaZhu2016}.  

In this work we extend the $\Gamma$-convergence result of \cite{Ascione2023} to the case of stochastic leaders when the state space of the particles is compact, exploiting ideas from \cite{CarmonaZhu2016} for bounded Lipschitz coefficients (with leaders). From this partial mean-field limit, we obtain a McKean-Vlasov SDE with coefficients that depend on the law of the process conditioned on the noise driving the leaders and that is coupled to an It\^o SDE for the lead agents.  An equivalent characterisation of the limit dynamics is in the form of a non-linear Fokker-Planck equation that governs the conditional law of the limiting system of followers and that is coupled to the SDE that governs the state of the lead agents. By adapting arguments in \cite{Kurtz1999} we show that the non-linear Fokker-Planck equation has a unique solution which has an $L^2$-density with respect to the Lebesgue measure. The partial mean-field limit of an interacting agent system with lead agents and followers is the first key contribution of this paper.

With regard to the numerical solution of the resulting high-dimensional optimal control problem, discretising the dynamic programming equation, a typically non-linear parabolic or elliptic partial differential equation (PDE), is usually out of question, even for a moderate number of agents; see \cite{achdou2020mean,kushnerBook} and the references therein. 
The same goes for the formulation of the optimal control problem based on the stochastic maximum principle, which leads to a pair of fully coupled forward-backward SDEs (FBSDEs) for the stochastic dynamics of the agents and the adjoint variable (which is typically the gradient of the associated value function), the numerical discretisation of which requires fully implicit time stepping schemes that can be challenging  \cite{chessari2023numerical}. We should mention that there are situations, in which the dynamic programming equation has a special form and can be represented in terms of a \textit{decoupled} FBSDE for the uncontrolled forward dynamics and the scalar value function (e.g.~\cite{hartmann2019variational,kebiri2019adaptive}), which can be discretised by explicit time stepping schemes, but these are rare cases. A viable alternative is the class of stochastic optimisation methods, such as the Robins-Monro algorithm \cite{harold2003stochastic}, least-squares Monte Carlo \cite{belomestny2010regression}, cross-entropy minimisation \cite{zhang2014applications}, or methods that are inspired by machine learning techniques and can handle potentially high-dimensional problems \cite{nusken2021solving}. 

In this work we adapt the results from \cite{Hartmann_2012,Lie2021} to give a numerical approximation to the optimal controls using a gradient descent algorithm for finding an optimal  control, based on explicit gradient computations and a representation of the feedback control in terms of finitely many ansatz functions. For finitely many agents, this scheme is straightforward to implement and not too computationally demanding. Moreover, and this is the second key contribution of this paper, we show that the algorithm generalises  to the partial mean-field framework and apply it to the case when the controlled SDE for the lead agents is coupled to a non-linear McKean-Vlasov-Fokker-Planck equation for the mean-field of the  uncontrolled agents. The algorithm belongs to the class of policy gradient algorithms \cite{peters2011policy}, with the specific that in our case the optimal control depends on both the state of the lead agents and the conditional law of the followers. We follow a first-discretise-then optimise strategy to apply the policy gradient algorithm to the semi-discretised SDE-PDE system for the leader and the mean-field of the followers (which then becomes an SDE-ODE system), for which the gradient can be explicitly computed, using Girsanov's theorem. 

As a demonstration we study the optimal control of the noisy Hegselmann-Krause model \cite{Wang2017} for a finite number of particles and in the mean-field limit. The Hegselmann-Krause model is a prototypical model from opinion dynamics, describing how the opinions of individuals evolve as they interact with each other \cite{Hegselmann2002}. We aim to bring the population to a consensus which has also been studied in \cite{Wongkaew2014} for the finite, deterministic model with leaders, and in \cite{Albi2015} and \cite{Bicego2025} for the Fokker Planck equation associated with the limiting systems of the deterministic and stochastic models without leaders, respectively. To the best of our knowledge this is the first time that the optimal control of the noisy Hegselmann-Krause model has been studied in the presence of leaders. The introduction of stochastic noise substantially alters both the analytical framework and the numerical methodology compared to the deterministic setting considered in \cite{Wongkaew2014}, as one must additionally account for fluctuations appearing in the dynamics.

\paragraph{Outline of the paper}

The rest of this paper is organised as follows. In Section \ref{sec:Ncontrol} we describe the stochastic optimal control for the finite particle system as well as the assumptions used throughout this paper. Section \ref{sec:MFLderivation} is devoted to the derivation of the mean-field limit. Next, in Section \ref{sec:MFLcontrol} we study the optimal control of the limiting system proving that the optimal controls of the finite system converge to it. The gradient descent algorithm to find an approximate solution of the optimal control for both the finite, and limiting system is discussed in \ref{sec:NumAlg}. This section also contains the derivation of the non-linear Fokker-Planck equation associated to the mean-field limit, and a result on the uniqueness of its solutions. Various numerical results on the optimal control of the finite and limiting systems for the Hegselmann-Krause model are presented in Section \ref{sec:Application}. Lastly, our conclusions and outlook are given in Section \ref{sec:Conclusion}.

\section[]{Optimal control for the $N$ particle system with leaders}
\label{sec:Ncontrol}

Let us consider a particle system consisting of $N \in \mathbb{N}$ identical followers and a leader which aims to minimise some given criteria. The state space of each follower and the leader is $\mathbb{R}^d$. For notational simplicity, the results are presented for a single leader. The extension to a fixed number of cooperating leaders who minimise a common cost is immediate, requiring only a corresponding enlargement of the leader state space. The dynamics of the followers, $X^N = \left( X^{1,N} , \dots, X^{N,N}  \right)$, and of the leader, $Y^N$ are given by
\begin{equation}
\label{eq:UncSys}
\begin{aligned}
    dX^{i,N}_t & = b(t,X_t^{i,N},Y_t^N,\mu_{X_t^N}) dt  + \sigma_X(t,X_t^{i,N},Y_t^N,\mu_{X_t^N}) dB_{t}^i , \quad i = 1, \dots , N   \\
    dY^N_t & = (c+u)(t,Y_t^N,\mu_{X_t^N}) dt  + \sigma_Y(t,Y_t^N,\mu_{X_t^N}) dB_{t}^Y ,
\end{aligned}
\end{equation}
where 
\begin{equation*}
    \mu_{X_t^N}  := \frac{1}{N} \sum_{j=1}^N \delta_{X_t^{j,N}} ,
\end{equation*}
is the empirical measure and $(B^i)_{i \leq N}$ and $B^Y$ are independent, $m$-dimensional Brownian motions. The coefficients $b$ and $\sigma_X$ describe the effect the other followers and the leader have on the dynamics of one follower, while $c$ and $\sigma_Y$ encode how the followers influence the leader. The control term $u$ is in feedback form and is chosen by the leader to minimise the cost functional 
\begin{equation}
    \label{eq:Cost}
    J^N(u) := \mathbb{E} \left[ \int_0^T r (Y_t^N,\mu_{X_t^N}) dt +  \frac{\lambda}{2} \int_0^T |u(t,Y_t^N,\mu_{X_t^N})|^2 dt  \right] ,
\end{equation}
consisting of a running cost $r$ and a `cost of control', the second term. Here $\lambda$ is a positive constant. The stochastic optimal control problem for a given class $U$ of admissible controls  is then to find the solution to the minimisation problem
\begin{equation}
    \label{eq:OC_problem}
    \min_{u \in U} J^N(u),
\end{equation}
such that the state dynamics \eqref{eq:UncSys} hold. If there exists a unique minimiser solving the optimal control problem, we shall denote it by $u_N^* \in U$.

Let us now give some of the mathematical formulation and assumptions used throughout this paper. For a Polish metric space $(\mathcal{E},d)$ we let $\mathcal{P}(\mathcal{E})$ denote the set of Borel probability measures on $\mathcal{E}$. For $p \geq 1$, we let $\mathcal{P}_p (\mathcal{E})$ be the subset of $\mathcal{P} (\mathcal{E})$ such that for all $\mu \in \mathcal{P}_p(\mathcal{E}) $ 
\begin{equation*}
    \int_\mathcal{E}  d(x,x_0)^p \mu(dx) < \infty ,
\end{equation*}
for any $x_0 \in \mathcal{E}$. The Wasserstein-$p$ distance, $W_{d , p}$,  between two elements $\mu$ and $v$ of $\mathcal{P}_p (\mathcal{E}) $ is defined by
\begin{equation*}
    W_{d , p} (\mu, v) := \inf_{\pi \in  \Pi(\mu,v)} \left( \int_{\mathcal{E}\times \mathcal{E}}  d(x,y)^p \pi(dx,dy)  \right)^{\frac{1}{p}} = \inf_{ \substack{ X \sim \mu \\ Y \sim v} } \mathbb{E} \left[ d(X,Y)^p   \right]^{\frac{1}{p}} ,
\end{equation*}
where $\Pi(\mu,v)$ is the set of all couplings of $\mu$ and $v$. We equip $\mathbb{R}^{d}$ with the Euclidean distance $|\cdot|$.
We work on a standard probability space $\left( \Omega, \mathcal{F}, \mathbb{P}  \right)$ on which we consider four, independent sources of randomness: the Brownian motions acting on the followers $(B^i)_{i \leq N}$ and leader $B^Y$, as well as the initial conditions $X_0^N = \left( X_0^{1,N} , \dots, X_0^{N,N}  \right)$ and $Y_0$. 

We shall make the following assumptions on the coefficients and cost functional.
\begin{ass}
    \label{ass:Lipschitz}
    For any $u \in U$ the functions
    \begin{equation*}
        \begin{aligned}
            (b, \sigma_X) &: [0,T] \times \mathbb{R}^{d} \times \mathbb{R}^{d} \times \mathcal{P}_2(\mathbb{R}^{d})  \to (\mathbb{R}^{d}, \mathbb{R}^{d \times m})  \\
            (c, u, \sigma_Y) &: [0,T] \times \mathbb{R}^{d} \times  \mathcal{P}_2(\mathbb{R}^{d})  \to (\mathbb{R}^{d}, \mathbb{R}^{d} ,\mathbb{R}^{d \times m})
        \end{aligned}
    \end{equation*}
    are measurable, uniformly continuous in $t \in [0,T]$ and globally Lipschitz continuous for the remaining variables, $\mathbb{R}^d$ being equipped with the Euclidean norm and $\mathcal{P}_2(\mathbb{R}^d)$ with the Wasserstein-$2$ distance.
\end{ass}
\begin{ass}
    \label{ass:Coeff_square_bound}
    For all $u \in U$, there exists $q \geq 2$, such that
    \begin{equation*}
        \mathbb{E} \left[  \int_0^T |b(t,0,0,\delta_0)|^q + |\sigma_X(t,0,0,\delta_0)|^q + |(c + u)(t,0,\delta_0)|^q    +|\sigma_Y(t,0,\delta_0)|^q dt \right]  < \infty .
    \end{equation*}
\end{ass}
\begin{ass}
    \label{ass:SDE_IC}
    The initial data $X^{N}_0  \in \mathbb{R}^{N \times d}$, and $Y_0\in \mathbb{R}^d$ are independent of each other and the $\sigma$-algebra generated by the Brownian motions $B^i$, $1, \dots, N$ and $B^Y$. Moreover, $(X_0^{i,N})_{i \leq N}$ are independent and identically distributed (i.i.d.) with common distribution $g_0 \in \mathcal{P}_2(\mathbb{R}^{d})$ and $\mathbb{E} \left[ |Y_0|^2  \right] < \infty$. 
\end{ass}

\begin{ass}
    \label{ass:costLipshtiz}
    There exists a constant $C\geq 0$, such that for any $y,y' \in \mathbb{R}^d$ and $\mu, v \in \mathcal{P}_2(\mathbb{R}^d)$, it holds
    \begin{equation*}
        \begin{aligned}
        & |r(y,\mu) - r(y',v)| \leq C \left( |y| + |y'| + M_2(\mu ) + M_2(v) \right) \times (   |y-y'| + W_2(\mu,v)   ),     
        \end{aligned}        
    \end{equation*}
    where $M_2(\mu) := \int |x|^2 \mu(dx)$.
\end{ass}
Assumptions \ref{ass:Lipschitz}, \ref{ass:Coeff_square_bound} and \ref{ass:SDE_IC} guarantee that the system \eqref{eq:UncSys} is well-posed for any $u\in U$, while Assumption \ref{ass:costLipshtiz} is required so that the cost functional \eqref{eq:Cost} is well-defined, see \cite{Carmona2018a}.

\section{Derivation of the mean-field limit}
\label{sec:MFLderivation}

The study of interacting particle systems becomes difficult as the number of particles in the system grows large. To circumvent this problem it is typical to derive a mean-field limit as $N \to \infty$, by proving a propagation of chaos result. An overview of the various methods to prove propagation of chaos can be found in \cite{SznitmanTopics,Chaintron2022a,Chaintron2022b} and the references therein.

As mentioned in the introduction the system \eqref{eq:UncSys} is non-standard, as the dynamics of the followers are coupled to those of the leader (for which we do not take a mean-field limit as we consider a total of one leader). It was shown in \cite{CarmonaZhu2016} that for the particle system \eqref{eq:UncSys} with constant diffusion coefficients the limiting system is a conditional McKean-Vlasov SDE. In this section we extend this result also to the case when the diffusion coefficient is non-constant.

We first establish some notation and recall essential  preliminaries. For $k \in \mathbb{N}$, the state space of $k$ followers $(\mathbb{R}^{d \times k}, d)$ is a Polish space, with the distance $d( \mathbf{x}^k, \mathbf{y}^k) := \frac{1}{k} \sum_{i=1}^k \left| x^i - y^i \right| $, where $ \mathbf{x}^k = (x^1 , \cdots , x^k)$. To simplify the notation we set $W_p := W_{d,p}$ and $\mathcal{W}_p := W_{W_p,p}$.

To derive the propagation of chaos result we first identify the limiting system. On the probability space $(\Omega,\mathcal{F},\mathbb{P})$ we consider another independent Brownian motion $B^X$, and initial conditions $(\bar{X}_0,\bar{Y}_0)$ and equip the probability space with the filtration $\mathbb{F} = (\mathcal{F}_t)_{t\geq 0}$, generated by $(B^X,B^Y,\bar{X}_0,\bar{Y}_0)$. Moreover, let $ (\mathcal{F}^Y_{t})_{t\geq0}$ be the filtration generated by the Brownian motion $B^Y$ and initial condition $Y_0$. For some final time $T > 0 $ the limiting system as $N \to \infty$ is identified as (see \cite{CarmonaZhu2016} and Theorem \ref{thm:PropChaos}) 
\begin{equation}
    \label{eq:BasicMcKeanSDE}
    \begin{aligned}
        d \bar{X}_t &= b(t,\bar{X}_t,\bar{Y}_t,g_t) dt + \sigma_X(t,\bar{X}_t,\bar{Y}_t,g_t) dB^X_{t} \\
        d \bar{Y}_t &= (c+u)(t,\bar{Y}_t,g_t) dt + \sigma_Y(t,\bar{Y}_t,g_t)  dB^Y_{t} \\
        g_t &= Law(\bar{X}_t | \mathcal{F}_{t}^Y)
    \end{aligned} 
\end{equation}  
where $B^X$ and $B^Y$ are independent $m$-dimensional Brownian motions. These types of systems are commonly referred to as conditional McKean-Vlasov systems \cite{Carmona2018a}. For square integrable initial data and under Assumptions \ref{ass:Lipschitz} and \ref{ass:Coeff_square_bound}, \eqref{eq:BasicMcKeanSDE} is the limiting system of \eqref{eq:UncSys}, in the sense that uniformly in $t$ the empirical measure $\mu_{X_t^N}$ converges to $g_t$ as $N \to \infty$ in the metric space $\mathcal{W}_2$. For a precise statement see Theorem \ref{thm:PropChaos}.

\begin{remark}
    The regular conditional distribution of $\bar{X}_t$ given $\mathcal{F}_{t}^Y$ exists, since the state space of the particles is Polish, see \cite[Theorem 1.1]{Carmona2018a}. This allows us to define $Law(\bar{X}_t | \mathcal{F}_{t}^Y)$ appropriately.
\end{remark}

\begin{defn}\label{def:sol}
    For any square integrable initial condition $(X_0,Y_0)$ taking values in $\mathbb{R}^d \times \mathbb{R}^d$, by a (strong) solution of \eqref{eq:BasicMcKeanSDE} we mean a continuous, $\mathbb{F}$-adapted process $ (\bar{X}_t,\bar{Y}_t)_{t \geq 0}$, such that for all $t \in [0,T]$, 
    \begin{equation*}
        \begin{aligned}
            \bar{X}_t &= X_0 + \int_0^t b(s,\bar{X}_s,\bar{Y}_s,g_s) ds + \int_0^t  \sigma_X(s,\bar{X}_s,\bar{Y}_s,g_s) dB^X_{s} \\
            \bar{Y}_t &= Y_0 + \int_0^t (c+u)(s,\bar{Y}_s,g_s) ds + \int_0^t \sigma_Y(s,\bar{Y}_s,g_s)  dB^Y_{s} \\
            g_s &= Law(\bar{X}_s | \mathcal{F}_{s}^Y) ,
        \end{aligned}
    \end{equation*}
    holds $\mathbb{P}$-a.s.
\end{defn}

We then have the following well-posedness result.

\begin{prop} \label{prop:well_posedness}
    If $\mathbb{E}[|\bar{X}_0 |^q + |\bar{Y}_0 |^q] < \infty$ for some $q \geq 2$ and Assumptions \ref{ass:Lipschitz} and \ref{ass:Coeff_square_bound} hold for the same $q$, then \eqref{eq:BasicMcKeanSDE} has a unique  strong solution $(\bar{X},\bar{Y})$ in the sense of Definition \ref{def:sol} such that
    \begin{equation*}
        \mathbb{E} \left[ \sup_{t\leq T} |\bar{X}_t |^q + \sup_{t\leq T} |\bar{Y}_t |^q \right] \leq C \mathbb{E} \left[  |\bar{X}_0 |^q + |\bar{Y}_0 |^q \right] <\infty ,
    \end{equation*}
    where C is some constant depending on $T$, $q$ and the Lipschitz constants from the assumptions.
\end{prop}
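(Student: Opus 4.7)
The plan is to prove well-posedness by a Banach fixed-point argument on the space of $\mathcal{F}^Y$-adapted, $\mathcal{P}_2(\mathbb{R}^d)$-valued processes, tailored to the conditional McKean-Vlasov structure. Concretely, I would work on
\[
\mathcal{M} := \left\{ \mu = (\mu_t)_{t\le T} : \mu \text{ is } \mathcal{F}^Y\text{-progressive, } \mu_t\in\mathcal{P}_2(\mathbb{R}^d), \ \sup_{t\le T}\mathbb{E}[M_2(\mu_t)]<\infty\right\},
\]
equipped with the weighted metric $\rho_\beta(\mu,\nu)^2 := \sup_{t\le T} e^{-\beta t}\,\mathbb{E}\bigl[W_2(\mu_t,\nu_t)^2\bigr]$, with $\beta$ chosen large at the end.

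For fixed $\mu \in \mathcal{M}$, I would first solve the $\bar Y^\mu$-equation. Since its coefficients are Lipschitz in $y$ and in the measure argument (Assumption \ref{ass:Lipschitz}) and $\mu_t$ is $\mathcal{F}^Y_t$-measurable, this is a classical It\^o SDE in the filtration $\mathcal{F}^Y$ with random Lipschitz coefficients driven by $B^Y$, admitting a unique strong solution with $\mathbb{E}[\sup_{t\le T}|\bar Y_t^\mu|^q]<\infty$ by BDG and Gr\"onwall. With $\bar Y^\mu$ in hand, the $\bar X^\mu$-equation becomes a standard It\^o SDE in $\mathbb{F}$ driven by $B^X$, yielding a unique strong solution and the analogous moment bound. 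I would then define $\Phi:\mathcal{M}\to\mathcal{M}$ by $\Phi(\mu)_t := \text{Law}(\bar X_t^\mu\mid \mathcal{F}_t^Y)$; that $\Phi(\mu)\in\mathcal{M}$ uses the existence of regular conditional distributions on the Polish state space together with the moment bound just established.

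For the contraction, I would take $\mu,\nu\in\mathcal{M}$ and couple the solutions through the common Brownian motions $B^X,B^Y$ and initial data $(\bar X_0,\bar Y_0)$. This in particular yields, on $\mathcal{F}^Y_t$, a coupling of $\Phi(\mu)_t$ and $\Phi(\nu)_t$, so that by the definition of $W_2$ as an infimum over couplings,
\[
\mathbb{E}\bigl[W_2(\Phi(\mu)_t,\Phi(\nu)_t)^2\bigr] \le \mathbb{E}\bigl[|\bar X_t^\mu - \bar X_t^\nu|^2\bigr].
\]
Applying It\^o's isometry, BDG, Assumption \ref{ass:Lipschitz} and Gr\"onwall to the pair $(\bar X^\mu-\bar X^\nu,\bar Y^\mu-\bar Y^\nu)$ gives
\[
\mathbb{E}\bigl[|\bar X_t^\mu - \bar X_t^\nu|^2 + |\bar Y_t^\mu - \bar Y_t^\nu|^2\bigr] \le C\int_0^t \mathbb{E}\bigl[W_2(\mu_s,\nu_s)^2\bigr]\,ds,
\]
and choosing $\beta$ sufficiently large makes $\Phi$ a strict contraction on $(\mathcal{M},\rho_\beta)$, yielding a unique fixed point $g$ and thereby a unique strong solution $(\bar X,\bar Y)$. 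The $q$-th moment estimate follows by applying It\^o's formula to $|\bar X_t|^q + |\bar Y_t|^q$, using the linear-growth consequence of Assumptions \ref{ass:Lipschitz}--\ref{ass:Coeff_square_bound}, together with the bound $M_2(g_t)\le \mathbb{E}[|\bar X_t|^2\mid \mathcal{F}_t^Y]$ and the tower property, and closing with Gr\"onwall.

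The main obstacle I anticipate is the conditional-law bookkeeping: one must verify that $t\mapsto\text{Law}(\bar X_t^\mu\mid\mathcal{F}_t^Y)$ admits an $\mathcal{F}^Y$-progressive version (not merely a pointwise one), and that the \emph{synchronous} coupling through the common $B^X$ indeed descends to a coupling of the conditional laws, since an optimal transport plan built pointwise in $\omega$ between $\Phi(\mu)_t(\omega)$ and $\Phi(\nu)_t(\omega)$ need not be jointly measurable. Once these measurability points are handled, the remainder is a direct conditional analogue of Sznitman's classical argument \cite{SznitmanTopics}, with the leader dynamics contributing only an additional Lipschitz term in the Gr\"onwall estimate.
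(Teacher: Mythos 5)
Your proposal is correct and follows essentially the same route as the paper, whose own proof is only a sketch deferring to the Banach fixed-point/Gr\"onwall argument of \cite[Proposition 7.2]{CarmonaZhu2016}: a contraction on a weighted space of $\mathcal{F}^Y$-adapted measure-valued flows, with the conditional law of the synchronously coupled solutions providing the coupling needed to bound the Wasserstein distance. Your closing remark about measurability is the right caveat, and is handled in the cited reference exactly by working with progressive versions of $\mathcal{L}^1(\bar X_t)$ and using the joint conditional law (rather than an optimal plan) as the coupling.
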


\begin{proof}
    The assumptions on the initial condition and Assumption \ref{ass:Coeff_square_bound} guarantee that all quantities are in the appropriate Banach space, while Assumption \ref{ass:Lipschitz} allows us to set up a Gr\"onwall inequality from which the mapping is shown to be a contraction. 
    Therefore, the lemma follows from an application of Banach's fixed point theorem to an appropriately defined solution mapping, for details see \cite[Proposition 7.2]{CarmonaZhu2016}. 
\end{proof}

Let us now setup our propagation of chaos result which we shall prove using a so-called synchronous coupling method, see Section 4.1.2 in \cite{Chaintron2022a}.
For some $N \in \mathbb{N}$ let $X^N := (X^{1,N}, \dots, X^{N,N}), Y^N$ be the solution of the finite particle system \eqref{eq:UncSys} and $\bar{X}^N = (\bar{X}^{1,N}, \dots, \bar{X}^{N,N}), \bar{Y}$ be the solution of
\begin{equation}
    \label{eq:McKeanSDE}
    \begin{aligned}
         \bar{X}^{i,N}_t &= X_0^{i,N} + \int_0^t b(t,\bar{X}^{i,N}_s,\bar{Y}_s,g_s) ds + \int_0^t \sigma_X(t,\bar{X}^{i,N}_s,\bar{Y}_s,g_s) dB^i_{s} \\
         \bar{Y}_t & =  Y_0 +  \int_0^t (c+u)(t,\bar{Y}_s,g_s) ds + \int_0^t \sigma_Y(t,\bar{Y}_s,g_s) dB_{s}^Y .
    \end{aligned} 
\end{equation} 

Note that for every $i=1,\dots, N$, $g_t = Law(\bar{X}_t | \mathcal{F}_{t}^Y) = Law(\bar{X}^{i,N}_t | \mathcal{F}^Y_{t}) = Law(\bar{X}^{1,N}_t | \mathcal{F}^Y_{t}) $, since $(\bar{X}^{i,N}_t)_t$ are $\mathcal{F}^Y_{t}$ conditionally i.i.d., for details see \cite[Proposition $7.4$]{CarmonaZhu2016}. Due to the conditional independence of $\bar{X}_t^{i,N}$, we have that $Law(\bar{X}_t^N | \mathcal{F}^Y_{t}) = g_t^{\otimes N}$ and we denote the conditional law of $X_t^N$, defined by \eqref{eq:UncSys}, by $g_t^N := Law(X_t^N | \mathcal{F}^Y_{t}) $. We aim to prove a propagation of chaos result showing that the difference between  $g_t^N$ and $g_t^{\otimes N}$, in a suitable metric space, convergence to zero as $N \to \infty$. Since $g_t^N$ and $g_t^{\otimes N}$ are both conditional laws, they are random measures and to prove their convergence we need to consider the space $\mathcal{P}(\mathcal{P}(\mathbb{R}^{d \times N}))$  which we equip with the Wasserstein-$p$ metric $\mathcal{W}_p$. Note that from the definition of the Wasserstein-$p$ metric, Jensen's inequality, and property of the conditional expectation we have
\begin{align*}
     \sup_{t \leq T} \mathcal{W}_p^p (Law(g_t^N), Law(g_t^{\otimes N}) ) \leq \mathbb{E} \left[ \frac{1}{N} \sum_{i=1}^N \sup_{t \leq T} |X_t^{i,N} - \bar{X}^{i,N}_t |^p \right] .
\end{align*}

Following the proof of \cite[Proposition $2.11$]{Carmona2018a} the processes $(X^{i,N}, \bar{X}^{i,N})$ are identically distributed and all terms in the sum of the last inequality have the same expectation. For the propagation of chaos results, it is therefore sufficient to prove that 
\begin{equation*}
    \mathbb{E} \left[ \sup_{t \leq T} |X_t^{1,N} - \bar{X}^{1,N}_t |^p \right] \xrightarrow[]{N \to \infty} 0 .
\end{equation*}
This formulation will allow us to use the SDE representations of systems \eqref{eq:UncSys} and \eqref{eq:McKeanSDE} in the synchronous coupling method. Proving the previous limit would therefore lead to the following propagation of chaos result.

\begin{theorem}\label{thm:PropChaos}
Given that Assumptions \ref{ass:Lipschitz}, \ref{ass:Coeff_square_bound} and \ref{ass:SDE_IC} are satisfied, let $X_t^{i,N}$, $Y_t^N$ be the solutions of \eqref{eq:UncSys} and $\bar{X}_t^{i,N}$, $\bar{Y}_t$ be the solutions of \eqref{eq:McKeanSDE}. Then, for each $T \geq 0$
\begin{align}
     \lim_{N \to \infty} \left( \mathbb{E} \left[ \sup_{t \leq T} |X_t^{1,N} - \bar{X}^{1,N}_t |^2 \right] +  \mathbb{E} \left[ \sup_{t \leq T} |Y_t^N - \bar{Y}_t|^2   \right]    \right) = 0  ,
\end{align}
and, therefore,
\begin{align}
    \lim_{N \to \infty} \left( \sup_{t \leq T} \mathcal{W}_2^2 (Law(g_t^N), Law(g_t^{\otimes N}) ) +  \mathbb{E} \left[ \sup_{t \leq T} |Y_t^N - \bar{Y}_t|^2   \right]    \right) = 0 .
\end{align}
\end{theorem}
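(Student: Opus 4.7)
The strategy is a synchronous coupling: the finite $N$-particle system \eqref{eq:UncSys} and the intermediate non-interacting system \eqref{eq:McKeanSDE} share the same Brownian motions $(B^i)_{i\leq N},B^Y$ and the same initial conditions, so the two processes can be compared pathwise. Setting
\begin{equation*}
v_N(t) := \mathbb{E}\Bigl[\sup_{s\leq t} |X_s^{1,N}-\bar X_s^{1,N}|^2\Bigr], \qquad w_N(t) := \mathbb{E}\Bigl[\sup_{s\leq t} |Y_s^N-\bar Y_s|^2\Bigr],
\end{equation*}
the first claim of the theorem reduces to $v_N(T)+w_N(T)\to 0$, and the second then follows at once from the bound $\sup_{t\leq T}\mathcal{W}_2^2(\mathrm{Law}(g_t^N),\mathrm{Law}(g_t^{\otimes N}))\leq \mathbb{E}\bigl[N^{-1}\sum_i\sup_{t\leq T}|X_t^{i,N}-\bar X_t^{i,N}|^2\bigr]$ already recorded before the statement, together with exchangeability of the couples $(X^{i,N},\bar X^{i,N})$.

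The plan is to write the SDEs for the differences $X^{i,N}-\bar X^{i,N}$ and $Y^N-\bar Y$, apply Doob's maximal inequality together with the Burkholder-Davis-Gundy inequality to the stochastic integrals and Cauchy-Schwarz to the drift terms, and then invoke the global Lipschitz bounds of Assumption \ref{ass:Lipschitz} on $b,\sigma_X,c+u,\sigma_Y$ to arrive at
\begin{equation*}
v_N(t)+w_N(t)\leq C\!\int_0^t\Bigl(v_N(s)+w_N(s)+\mathbb{E}\bigl[W_2^2(\mu_{X_s^N},g_s)\bigr]\Bigr)\,ds.
\end{equation*}
The empirical-measure term is split by the triangle inequality combined with the coupling bound $W_2^2(\mu_{X_s^N},\mu_{\bar X_s^N})\leq N^{-1}\sum_i|X_s^{i,N}-\bar X_s^{i,N}|^2$ induced by the synchronous construction, and exchangeability turns the resulting empirical sum into a term proportional to $v_N(s)$. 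Writing $\varepsilon_N(s):=\mathbb{E}[W_2^2(\mu_{\bar X_s^N},g_s)]$ and invoking Grönwall's lemma then yields
\begin{equation*}
v_N(T)+w_N(T)\leq C\,e^{CT}\!\int_0^T\varepsilon_N(s)\,ds.
\end{equation*}

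The main obstacle is showing that $\int_0^T\varepsilon_N(s)\,ds\to 0$. Unlike the classical Sznitman setting, the $\bar X_s^{i,N}$ are not i.i.d.\ but only \emph{conditionally} i.i.d.\ given the leader $\sigma$-algebra $\mathcal{F}_s^Y$, with common conditional law $g_s$; see \cite[Proposition 7.4]{CarmonaZhu2016}. I would address this by working conditionally on $\mathcal{F}_s^Y$: since, in this conditioning, the $\bar X_s^{i,N}$ form an i.i.d.\ sample from the random measure $g_s$, a Fournier-Guillin type empirical-measure estimate applied conditionally gives $\mathbb{E}\bigl[W_2^2(\mu_{\bar X_s^N},g_s)\,\big|\,\mathcal{F}_s^Y\bigr]\leq \psi_N\,M_q(g_s)^{2/q}$ for a suitable $q>2$, with $\psi_N\to 0$ depending only on $N,d,q$. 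The higher-moment bound of Proposition \ref{prop:well_posedness}, applied with this $q$ (available under the extra moment conditions on $g_0$ that the argument requires), then supplies a uniform-in-$s$ bound on $\mathbb{E}[M_q(g_s)^{2/q}]$, so $\varepsilon_N(s)\leq C\psi_N$ uniformly in $s$ and dominated convergence finishes the argument. If only square-integrable initial data are available, one instead combines the almost-sure convergence of the conditional empirical measures in $W_2$ with the uniform integrability of $(|\bar X_s^{i,N}|^2)_{i,N}$ provided by Proposition \ref{prop:well_posedness} to obtain the same conclusion without a rate.
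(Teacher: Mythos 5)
Your proposal is correct and follows essentially the same route as the paper: synchronous coupling, Lipschitz/BDG estimates leading to a Gr\"onwall inequality with the residual term $\mathbb{E}[W_2^2(\mu_{\bar X_s^N},g_s)]$, exchangeability to absorb $W_2^2(\mu_{X_s^N},\mu_{\bar X_s^N})$ into $v_N$, and then a conditional law-of-large-numbers for the conditionally i.i.d.\ particles combined with dominated convergence (the paper conditions on the leader's randomness via a product-space construction and applies a Glivenko--Cantelli/Wasserstein convergence theorem, exactly your fallback argument for merely square-integrable data). The only minor imprecision is the quantitative branch: the Fournier--Guillin rate for $W_2$ as used in the paper requires $q>4$ moments rather than $q>2$, but since the theorem's hypotheses only assume $\mathcal{P}_2$ initial data, your qualitative dominated-convergence argument is the one that carries the proof, as in the paper.
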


The proof of this theorem is given in the Appendix \ref{sec:Appendix1}. The proof is similar to the one performed in \cite[Section 7]{CarmonaZhu2016} which we have extended to include a non-constant diffusion coefficient and random $Y_0$. Obtaining a result with more general conditions on the coefficients could be possible, if, instead of using a pathwise coupling method to prove convergence of the empirical measures, we use compactness methods, i.e. proving tightness of the laws, convergence to the solutions of the limiting system and then uniqueness of the limit. We would expect that this methodology would allow us to relax some of the assumptions on the coefficients. There is a close analogy with systems in which, instead of a leader, a common noise term acts on the dynamics of the followers, since in that case the mean--field limit also takes the form of a conditional McKean--Vlasov SDE. The literature on propagation of chaos for systems with common noise is relatively well-developed, see e.g. \cite{Hammersley2021, Nikolaev2025, CrowellThesis}, in contrast to the case of systems with leaders considered here. Consequently, one could potentially adapt these existing results to our setting. As this goes beyond the scope of the present work, we leave it for future investigation.

\section{Optimal control for the McKean-Vlasov system}
\label{sec:MFLcontrol}

Let us now consider the optimal control of the limiting system \eqref{eq:BasicMcKeanSDE} introduced in the previous section. As for the finite system, we consider the control $u$ to only act on a leader, where the aim is to find the minimizer of the cost functional
\begin{equation}
    \label{eq:McKeanCost}
    J(u) := \mathbb{E} \left[ \int_0^T r (\bar{Y}_t,g_t) dt +  \frac{\lambda}{2} \int_0^T |u(t,\bar{Y}_t,g_t)|^2 dt  \right] ,
\end{equation}
where $\mu_{X_t^N} $ has been replaced by $g_t$ in \eqref{eq:Cost}. The space of admissible controls $U$ for the mean-field optimal control is the same as that for the finite control problem \eqref{eq:OC_problem}, and we denote an optimal mean-field control of \eqref{eq:McKeanCost} by $u^*$.

\begin{remark}
    A necessary and sufficient maximum principle for the mean-field optimal control problem was established in \cite[pp. 557--560]{Carmona2018a}, where the optimal dynamics are characterised by a system of conditional McKean-Vlasov forward-backward SDEs \cite[eq. (7.20)]{Carmona2018a}. Thus, uniqueness of an optimal control could in principle be obtained from a suitable well-posedness result for this forward-backward system, together with uniqueness of the Hamiltonian minimiser. However, as far as we are aware, no such well-posedness result is currently available under the assumptions considered here.
\end{remark}

For a case of a constant diffusion coefficient and deterministic $Y_0$, in \cite{CarmonaZhu2016} it was shown that $u^*$ was an $\epsilon_N$ optimal control for the $N$-particle system, meaning that for all $u \in U$, $J^N(u^*) \leq J^N(u) + \epsilon_N$, with $\epsilon_N \to 0$ as $N \to \infty$. This result was actually proved for the more general setting of a mean-field game with major and minor players of which the stochastic optimal control problem of leaders and followers is a specific case. A stronger $\Gamma$-convergence result of $J^N$ to $J$ was proved in \cite{Ascione2023} for the case of $\sigma_Y = 0$. Because $g$ and $\bar{Y}$ are deterministic for $\sigma_Y = 0$ they can be shown to exist in compact spaces, so this $\Gamma$-convergence  result guarantees the convergence of the minimizers of the finite problem to the minimizer of the limiting problem. We adapt the results from \cite{Ascione2023} to show the  convergence of the minimizers when $\sigma_Y \neq 0$, the state space is compact and the admissible controls have the following form. 

\begin{ass}
    \label{ass:AdControl}
    The set of admissible controls $U$ consists of functions $u(t,\bar{Y}_t,g_t) = h(t) f(\bar{Y}_t,g_t) $ where we assume that the family of functions $h \in C ( [0,T]; K )$ is uniformly bounded and equicontinuous, with values in a compact set $K\subset \mathbb{R}^{d \times l}$, and $f \in C(\mathbb{R}^d \times \mathcal{P}_2(\mathbb{R}^d) ; \mathbb{R}^d)$ is Lipschitz and bounded in all arguments. Additionally, we assume that the running cost $r: \mathbb{R}^d \times  \mathcal{P}_2(\mathbb{R}^d) \to \mathbb{R}$ is uniformly continuous and bounded. 
\end{ass}

Let us note that although the specification that the dynamics occur on a compact state space is restrictive, this assumption is often made when studying the mean-field limit of interacting particle systems, see e.g. \cite{Bicego2025}. In particular for the Hegselmann-Krause model which we consider in Section \ref{sec:Application}, it is sensible to consider a bounded domain as opinions cannot have an arbitrarily large magnitude. We then obtain the following lemma which is proved in the Appendix \ref{sec:Appendix2}.

\begin{lemma}  
    \label{lemma:ConvMcKean}
    Let Assumptions \ref{ass:Lipschitz} and \ref{ass:Coeff_square_bound} hold and Assumption \ref{ass:AdControl} be fulfilled for a sequence of controls $(u_N)_{N \in \mathbb{N}} = (h_N f_N)_{N \in \mathbb{N}}$, where all of the elements of the sequences $(h_N)_{N \in \mathbb{N}}$ and $(f_N)_{N \in \mathbb{N}}$ are bounded above by the same constant and the elements of the latter sequence all have the same Lipschitz constant. Furthermore, suppose that $h_N \to h$ and $f_N \to f$ uniformly in $C ( [0,T]; \mathbb{R}^{d \times l} )$ and $C(\mathbb{R}^d \times \mathcal{P}_2(\mathbb{R}^d) ; \mathbb{R}^d)$, respectively, such that Assumption \ref{ass:AdControl} holds. 
    Let $(\bar{X}_N,\bar{Y}_N)$ correspond to the solution of the conditional McKean-Vlasov SDE \eqref{eq:BasicMcKeanSDE} with $u_N = h_N f_N$ and $(\bar{X},\bar{Y})$ to the solution with $u = h f$ with the same square-integrable initial data.\footnote{Note that the dependence on $N$ enters through the sequence of control $(u_N)_{N \in \mathbb{N}}$ and not through the number of agents.} Then 
    \begin{equation*}
        \lim_{N \to \infty} \mathbb{E} \left[ \sup_{t \leq T} W_2^2 (g_{N,t}, g_t) +  \sup_{t \leq T} \left| \bar{Y}_{N,t} - \bar{Y}_{t} \right|^2    \right] =  0 ,
    \end{equation*}
    where $g_{N,t}:= Law(\bar{X}_{N,t} | \mathcal{F}_t^Y)$ and $g_{t}:= Law(\bar{X}_{t} | \mathcal{F}_t^Y)$.
\end{lemma}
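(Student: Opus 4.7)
The plan is to use a synchronous coupling: drive both $(\bar{X}_N, \bar{Y}_N)$ and $(\bar{X}, \bar{Y})$ by the \emph{same} Brownian motions $B^X, B^Y$ and the \emph{same} square-integrable initial data, so that every difference arises from the control perturbation $u_N - u$ and from Lipschitz fluctuations of the coefficients. The control perturbation splits as $u_N - u = (h_N - h) f_N + h(f_N - f)$; by the assumed uniform convergence $h_N \to h$ and $f_N \to f$ together with the uniform boundedness of $(h_N)$ and $(f_N)$, one obtains a deterministic error $\epsilon_N := \sup_{t,y,\mu} |u_N(t,y,\mu) - u(t,y,\mu)| \to 0$.

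For the integrated estimate I would subtract the two SDEs for the $\bar{Y}$-component, apply It\^o isometry and Doob's $L^2$ maximal inequality on the martingale part, and invoke the Lipschitz continuity of $c$ and $\sigma_Y$ from Assumption \ref{ass:Lipschitz}, the Lipschitz/boundedness properties of $f$ from Assumption \ref{ass:AdControl}, and the uniform bound $\epsilon_N$. The Wasserstein term produced by the Lipschitz bound on the coefficients is handled by the synchronous-coupling inequality $W_2^2(g_{N,s}, g_s) \le \mathbb{E}[|\bar{X}_{N,s} - \bar{X}_s|^2 \mid \mathcal{F}^Y_s]$, whose unconditional expectation is simply $\mathbb{E}|\bar{X}_{N,s} - \bar{X}_s|^2$. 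Repeating the same analysis for the $\bar{X}$-component produces a coupled Gr\"onwall inequality in $\mathbb{E}|\bar{X}_{N,t} - \bar{X}_t|^2$ and $\mathbb{E}|\bar{Y}_{N,t} - \bar{Y}_t|^2$, whose solution obeys $\mathbb{E}|\bar{X}_{N,t} - \bar{X}_t|^2 + \mathbb{E}|\bar{Y}_{N,t} - \bar{Y}_t|^2 \le C\epsilon_N^2$ uniformly on $[0,T]$.

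To upgrade to the statement of the lemma, I would take the supremum in $t$ inside the expectation and absorb it via BDG on the stochastic integrals; since the Wasserstein terms on the right-hand side are already controlled pointwise by the integrated estimate above, this yields $\mathbb{E}\sup_{t \le T} |\bar{Y}_{N,t} - \bar{Y}_t|^2 + \mathbb{E}\sup_{t \le T} |\bar{X}_{N,t} - \bar{X}_t|^2 \to 0$. The term $\mathbb{E}\sup_{t \le T} W_2^2(g_{N,t}, g_t)$ is more delicate, because the synchronous-coupling bound places the supremum outside a \emph{conditional} expectation and $\sup_t$ does not commute with $\mathbb{E}[\,\cdot \mid \mathcal{F}^Y_t]$. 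I would circumvent this by writing
\begin{equation*}
W_2^2(g_{N,t}, g_t) \;\le\; \mathbb{E}\!\left[ \sup_{s \le T} |\bar{X}_{N,s} - \bar{X}_s|^2 \,\Big|\, \mathcal{F}^Y_t \right] \;=:\; M_t^N,
\end{equation*}
noting that $(M_t^N)_{t \in [0,T]}$ is a nonnegative $(\mathcal{F}^Y_t)$-martingale, and applying Doob's $L^2$ maximal inequality together with Cauchy--Schwarz to obtain $\mathbb{E}\sup_t M_t^N \le 2 ( \mathbb{E}\sup_s |\bar{X}_{N,s} - \bar{X}_s|^4 )^{1/2}$. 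The compactness of the state space imposed in this section makes $|\bar{X}_{N,s} - \bar{X}_s|$ deterministically bounded, so the fourth-moment sup is dominated by the second-moment sup, which has just been shown to vanish.

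The main technical obstacle is precisely this last step: transferring $L^1$ convergence of $\sup_t |\bar{X}_{N,t} - \bar{X}_t|^2$ to $L^1$ convergence of $\sup_t W_2^2(g_{N,t}, g_t)$, since the conditional-expectation form of the synchronous-coupling bound and the non-commutativity of $\sup_t$ with $\mathbb{E}[\,\cdot \mid \mathcal{F}^Y_t]$ force the Doob-martingale / higher-moment detour above. All other pieces are routine given Proposition \ref{prop:well_posedness}, the Lipschitz structure of Assumption \ref{ass:Lipschitz}, and the boundedness provided by Assumption \ref{ass:AdControl}.
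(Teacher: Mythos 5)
Your overall strategy coincides with the paper's: both proofs use the synchronous coupling (same Brownian motions and initial data for the two systems), split the control error into a Lipschitz part controlled by the uniform Lipschitz constant of $f_N$ and a remainder coming from $(h_N-h)$ and $(f_N-f)$ that vanishes by uniform convergence and dominated convergence, and close the estimate with BDG and Gr\"onwall. The one place you genuinely diverge is the transfer from $\mathbb{E}\bigl[\sup_t |\bar{X}_{N,t}-\bar{X}_t|^2\bigr]\to 0$ to $\mathbb{E}\bigl[\sup_t W_2^2(g_{N,t},g_t)\bigr]\to 0$. You are right that $\sup_t$ does not commute with $\mathbb{E}[\,\cdot\mid\mathcal{F}_t^Y]$, but your Doob-martingale detour pays a real price: the $L^2$ maximal inequality forces you to a fourth moment of $\sup_s|\bar{X}_{N,s}-\bar{X}_s|$, which you then dominate by invoking compactness of the state space. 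The lemma as stated does not assume a compact state space (that hypothesis enters only in the subsequent corollary), nor does it assume $q\geq 4$ moments of the initial data, so as written your last step uses hypotheses the lemma does not grant. The cleaner route, which is what the paper's one-line inequality $\mathbb{E}[\sup_t W_2^2(g_{N,t},g_t)]\leq \mathbb{E}[\sup_t|\bar{X}_{N,t}-\bar{X}_t|^2]$ implicitly relies on, is to observe that since $(\bar{X}_{N,t},\bar{X}_t)$ is measurable with respect to the information up to time $t$ and the increments of $B^Y$ after time $t$ are independent of it, $g_{N,t}$ and $g_t$ are also versions of the conditional laws given the \emph{terminal} $\sigma$-algebra $\mathcal{F}_T^Y$ (equivalently, given the full leader randomness $\mathcal{F}^0$ in the product-space setup of Appendix A.1). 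Then $W_2^2(g_{N,t},g_t)\leq \mathbb{E}\bigl[\sup_{s\leq T}|\bar{X}_{N,s}-\bar{X}_s|^2\,\big|\,\mathcal{F}_T^Y\bigr]$ holds pathwise with a right-hand side independent of $t$, the supremum over $t$ costs nothing, and the tower property gives the desired bound with no maximal inequality and no extra moment or compactness assumptions. With that substitution your argument is complete and matches the paper's.
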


The above lemma yields a liminf inequality, which is essential for establishing $\Gamma$-convergence. A detailed proof is provided in Appendix  \ref{sec:Appendix2}.

\begin{lemma} \label{lem:liminf}
    If the same conditions as in Lemma \ref{lemma:ConvMcKean} are fulfilled, then 
    \begin{equation*}
        \liminf_{N \to \infty} J(u_N) \geq J(u) .
    \end{equation*}
\end{lemma}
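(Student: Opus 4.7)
The plan is to establish in fact the stronger equality $\lim_{N\to\infty} J(u_N) = J(u)$, from which the liminf inequality is immediate. Writing
\begin{equation*}
J(u_N) - J(u) = \mathbb{E}\!\int_0^T \!\bigl(r(\bar Y_{N,t},g_{N,t}) - r(\bar Y_t,g_t)\bigr)\,dt + \frac{\lambda}{2}\,\mathbb{E}\!\int_0^T\!\bigl(|u_N(t,\bar Y_{N,t},g_{N,t})|^2 - |u(t,\bar Y_t,g_t)|^2\bigr)\,dt,
\end{equation*}
I would show that each of the two terms vanishes in the limit, using Lemma \ref{lemma:ConvMcKean} to control the arguments and Assumption \ref{ass:AdControl} to control the integrands.

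For the running-cost term, since $r$ is uniformly continuous and bounded, the convergence
\begin{equation*}
\lim_{N\to\infty} \mathbb{E}\bigl[\sup_{t\leq T}\,|\bar Y_{N,t}-\bar Y_t|^2 + \sup_{t\leq T}\,W_2^2(g_{N,t},g_t)\bigr] = 0
\end{equation*}
from Lemma \ref{lemma:ConvMcKean} implies that $(\bar Y_{N,t},g_{N,t})\to(\bar Y_t,g_t)$ in probability with respect to $\mathbb{P}\otimes dt$ on $[0,T]\times\Omega$. Along any subsequence I would extract a further sub-subsequence converging $\mathbb{P}\otimes dt$-a.e.; by uniform continuity of $r$ the integrand converges pointwise, and by boundedness of $r$ I would apply dominated convergence to conclude. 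The usual subsequence argument then shows the full sequence of expectations converges to zero.

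For the control-cost term I would use the product structure $u_N = h_N f_N$ and the identity $|u_N|^2 - |u|^2 = (u_N - u)(u_N + u)$, where the second factor is uniformly bounded by the joint bound on $(h_N)$ and $(f_N)$ stated in the hypotheses. The first factor I would decompose as
\begin{equation*}
u_N(t,\bar Y_{N,t},g_{N,t}) - u(t,\bar Y_t,g_t) = (h_N(t)-h(t))f_N(\bar Y_{N,t},g_{N,t}) + h(t)(f_N-f)(\bar Y_{N,t},g_{N,t}) + h(t)\bigl(f(\bar Y_{N,t},g_{N,t}) - f(\bar Y_t,g_t)\bigr).
\end{equation*}
The first two terms tend to zero uniformly in $(t,\omega)$ by the hypothesized uniform convergences $h_N\to h$ and $f_N\to f$ together with the uniform bounds on $f_N$ and $h$. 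The third term is bounded, via the common Lipschitz constant of the $f_N$ and continuity of $f$, by a constant multiple of $|\bar Y_{N,t}-\bar Y_t| + W_2(g_{N,t},g_t)$, which tends to zero in $L^2(\mathbb{P}\otimes dt)$ by Lemma \ref{lemma:ConvMcKean}. Taking expectations, the Cauchy--Schwarz inequality (exploiting the uniform bound on the second factor $u_N+u$) yields the desired convergence.

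The main technical obstacle is that the cost is evaluated along different processes \emph{and} with different functionals simultaneously, so neither the uniform convergence $u_N\to u$ nor the process convergence of Lemma \ref{lemma:ConvMcKean} suffices in isolation: it is precisely the telescoping decomposition above, combined with the uniform Lipschitz estimate on the limit $f$ (inherited via the common Lipschitz constant of the $f_N$), that allows the two modes of convergence to interact. A secondary subtlety is that $r$ is only uniformly continuous (not Lipschitz), so for the running cost one cannot directly use Cauchy--Schwarz and must instead pass through the subsequence/dominated-convergence route; boundedness of $r$ is essential here to provide an integrable dominating function.
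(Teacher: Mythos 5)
Your argument is correct under the hypotheses as stated, and it actually establishes the stronger conclusion $\lim_{N\to\infty}J(u_N)=J(u)$, of which the liminf inequality is a trivial consequence. The paper takes a genuinely different route for the control-cost term: it only replaces $f_N$ by $f$ and the perturbed processes by the limiting ones (showing $\mathbb{E}\int_0^T |(h_N f_N(\bar Y_{N,t},g_{N,t}))^2-(h_N f(\bar Y_t,g_t))^2|\,dt\to 0$), and then handles the remaining passage $h_N\to h$ via lower semicontinuity of the convex integrand $h\mapsto\int_0^T (h(t)f(\bar Y_t,g_t))^2\,dt$ with respect to weak $L^1$ convergence, followed by Fatou's lemma --- which is why it only obtains a liminf. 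Your telescoping decomposition instead exploits the \emph{uniform} convergence $h_N\to h$ directly, and together with the uniform bounds and the inherited Lipschitz constant of $f$ this upgrades the conclusion to an equality. What the paper's weaker argument buys is robustness: the lower-semicontinuity-plus-Fatou step survives if $h_N\to h$ only weakly (say weak-$*$ in $L^\infty$), which is the mode of convergence that compactness of $K$ actually delivers in the Corollary (uniform convergence of a subsequence of $C([0,T];K)$-valued functions requires equicontinuity, which is not assumed there); under merely weak convergence of $h_N$ your full-limit claim would fail and only the liminf inequality would remain true. For the running-cost term the two proofs are essentially equivalent --- the paper's concave modulus of continuity combined with Jensen's inequality is a more quantitative packaging of your subsequence/dominated-convergence argument --- so no issue there.
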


For the optimal control problem of the particle system introduced in Section \ref{sec:Ncontrol}, we obtain the following theorem concerning the $\Gamma$-convergence of $J^N$ to $J$.

\begin{theorem}
    \label{thm:GammaConvergence}
    Under the conditions of Lemma \ref{lemma:ConvMcKean}, it holds that $J^N \xrightarrow{\Gamma} J$.
\end{theorem}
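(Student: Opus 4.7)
The plan is to verify the two defining conditions of $\Gamma$-convergence of $J^N$ to $J$: the liminf inequality, $\liminf_{N\to\infty} J^N(u_N) \geq J(u)$ for every sequence $u_N \to u$ satisfying the hypotheses of Lemma \ref{lemma:ConvMcKean}, and the existence of a recovery sequence, i.e.\ some $u_N \to u$ with $\limsup_{N\to\infty} J^N(u_N) \leq J(u)$. Since Lemma \ref{lem:liminf} already provides the liminf inequality for $J$ rather than $J^N$, the central bridge to prove is
\begin{equation*}
    \lim_{N\to\infty} \bigl| J^N(u_N) - J(u_N) \bigr| = 0
\end{equation*}
along every admissible sequence $(u_N)$; combined with Lemma \ref{lem:liminf} this yields $\Gamma$-liminf, while the constant choice $u_N \equiv u$ furnishes the recovery sequence.

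To establish the bridge I would decompose $J^N(u_N) - J(u_N)$ into the running-cost and quadratic-control contributions. For the running cost, Assumption \ref{ass:AdControl} provides uniform continuity and boundedness of $r$, so $|r(Y^N_t, \mu_{X^N_t}) - r(\bar Y_t, g_t)|$ is bounded and vanishes in $L^1$ as soon as $|Y^N_t - \bar Y_t| \to 0$ and $W_2(\mu_{X^N_t}, g_t) \to 0$ in a second-moment sense, so that dominated convergence applies to the time integral. For the control cost, the factorisation $u_N = h_N f_N$ with $h_N, f_N$ uniformly bounded and $f_N$ uniformly Lipschitz makes $|u_N(t,y,\mu)|^2$ bounded with a modulus of continuity controlled uniformly in $N$, so the same state convergences suffice. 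The required convergence of the states themselves follows from Theorem \ref{thm:PropChaos}, applied along the sequence $u_N$: its proof (Appendix \ref{sec:Appendix1}) uses only the uniform Lipschitz and growth bounds of the coefficients and the control, which are uniform in $N$ by hypothesis, so the synchronous coupling conclusion extends with an $o(1)$ rate independent of $N$. For the empirical measure one can split
\begin{equation*}
    W_2(\mu_{X^N_t}, g_t) \leq W_2(\mu_{X^N_t}, \bar\mu_{X^N_t}) + W_2(\bar\mu_{X^N_t}, g_t),
\end{equation*}
with $\bar\mu_{X^N_t} = \tfrac{1}{N}\sum_{i=1}^N \delta_{\bar X^{i,N}_t}$; the first term is bounded by $\tfrac{1}{N}\sum_i |X^{i,N}_t - \bar X^{i,N}_t|^2$ (again from the coupling in Theorem \ref{thm:PropChaos}), while the second vanishes in $L^2$ by the conditional law of large numbers for the $\mathcal{F}^Y_t$-conditionally i.i.d.\ samples $\bar X^{i,N}_t$, where compactness of the state space gives the uniform integrability needed.

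With the bridge in hand the two ingredients combine via
\begin{equation*}
    \liminf_{N\to\infty} J^N(u_N) \geq \liminf_{N\to\infty} J(u_N) - \limsup_{N\to\infty} \bigl| J^N(u_N) - J(u_N) \bigr| \geq J(u),
\end{equation*}
using Lemma \ref{lem:liminf} for the last step, which is the $\Gamma$-liminf. For the recovery sequence the constant choice $u_N \equiv u$ clearly satisfies the hypotheses of Lemma \ref{lemma:ConvMcKean}, and the bridge applied to it gives $J^N(u) \to J(u)$, hence $\limsup_{N\to\infty} J^N(u_N) \leq J(u)$, completing the $\Gamma$-convergence.

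The main obstacle I anticipate is making the propagation-of-chaos estimate uniform along the sequence $(u_N)$: one must track how the constants in Theorem \ref{thm:PropChaos} depend on the Lipschitz constants and bounds of the control, and verify that the Grönwall/coupling argument of Appendix \ref{sec:Appendix1} goes through unchanged when these data are uniform in $N$. Compactness of the state space together with Assumption \ref{ass:AdControl} (which renders $r$ and the controls bounded) is what permits the use of dominated convergence throughout; without compactness, additional moment estimates on $Y^N_t$, $\bar Y_t$ and on the second moments of $\mu_{X^N_t}$, $g_t$ would be needed to justify the exchange of limit and time integration.
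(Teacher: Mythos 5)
Your proposal follows essentially the same route as the paper's proof: establish the bridge $\lim_{N\to\infty}|J^N(u_N)-J(u_N)|=0$ via the propagation-of-chaos coupling of Theorem \ref{thm:PropChaos} together with the (uniform) moduli of continuity of $r$ and of the bounded controls $h_N f_N$, combine this with Lemma \ref{lem:liminf} to get the $\Gamma$-liminf, and take the constant sequence $u_N\equiv u$ as the recovery sequence. If anything, you are more explicit than the paper about the one genuinely delicate point, namely that the coupling estimate must hold along a sequence of controls varying with $N$ (requiring the Gr\"onwall constants to depend only on the common Lipschitz and boundedness constants), which the paper's proof invokes without comment.
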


We defer the proof of Theorem \ref{thm:GammaConvergence} to the Appendix \ref{sec:Appendix2}. For a compact state space  $D \subset \mathbb{R}^d$, the space $\mathcal{P}_p(D)$ is also compact, leading to the following corollary. 

\begin{cor}
Let the state spaces of the followers and leader $D \subset \mathbb{R}^d$ be compact, and suppose that Assumptions \ref{ass:Lipschitz}, \ref{ass:Coeff_square_bound}, \ref{ass:SDE_IC}, \ref{ass:costLipshtiz}, and \ref{ass:AdControl} are satisfied. Then, a solution $u^*$ of the optimal control problem for the mean-field limit \eqref{eq:BasicMcKeanSDE} exists a.s. Moreover, the limit of any convergent subsequence of optimal controls for the finite problem \eqref{eq:OC_problem}, is an optimal control for the mean-field limit.
\end{cor}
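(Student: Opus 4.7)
The strategy is to invoke the Fundamental Theorem of $\Gamma$-convergence: given the $\Gamma$-limit established in Theorem \ref{thm:GammaConvergence}, it suffices to verify that the sequence $(u_N^*)$ of optimal controls for the finite-particle problems is relatively compact in the topology underlying that $\Gamma$-convergence; any cluster point will then inherit optimality for the mean-field cost $J$.

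First I would extract a convergent subsequence from $(u_N^*) = (h_N^* f_N^*)$. Compactness of the state space $K \subset \mathbb{R}^d$ renders $\mathcal{P}_2(K)$ compact and metrizable by $W_2$, so the feedback factor $f$ of every admissible control is defined on the compact Polish space $K \times \mathcal{P}_2(K)$. Under the standing assumption (implicit in the hypothesis of Lemma \ref{lemma:ConvMcKean}) that the elements of $U$ share a common bound and that the factors $f$ share a common Lipschitz constant, the admissible class sits inside an equibounded, equicontinuous subset of $C([0,T];K') \times C(K \times \mathcal{P}_2(K);\mathbb{R}^d)$ and is therefore precompact by Arzel\`a--Ascoli. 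Hence along some subsequence $h_{N_k}^* \to h^*$ and $f_{N_k}^* \to f^*$ uniformly, and $u^* := h^* f^*$ lies in $U$.

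Having identified the candidate $u^*$, I apply the two halves of the $\Gamma$-convergence from Theorem \ref{thm:GammaConvergence}. For any competitor $u \in U$, the recovery (limsup) part furnishes a sequence $u_{N_k} \to u$ with $\limsup_k J^{N_k}(u_{N_k}) \leq J(u)$, while the liminf part applied to $u_{N_k}^* \to u^*$ yields $J(u^*) \leq \liminf_k J^{N_k}(u_{N_k}^*)$. Chaining these through the finite-level optimality $J^{N_k}(u_{N_k}^*) \leq J^{N_k}(u_{N_k})$ gives $J(u^*) \leq J(u)$ for every $u \in U$, so that $u^*$ is indeed a minimiser of the mean-field cost. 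The almost sure qualification in the statement comes from the conditional-law construction of $g_t$ in \eqref{eq:BasicMcKeanSDE}, which is defined only $\mathbb{P}$-a.s.

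The main obstacle I anticipate is precisely the compactness step. Unlike the deterministic leader case of \cite{Ascione2023}, in which both $\bar{Y}$ and $g$ live in compact trajectory spaces automatically, here one must justify uniform bounds and a uniform Lipschitz constant across the sequence of optimisers, which either requires restricting $U$ a priori to a subclass with such constants or an ex post argument showing that the optimisers inherit these bounds. The measure-valued argument of $f$ in the Arzel\`a--Ascoli application demands some care, but the compactness of $(\mathcal{P}_2(K),W_2)$ coming from compactness of $K$ keeps this step within reach.
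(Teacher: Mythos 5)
Your proposal is correct and follows essentially the same route as the paper: extract a uniformly convergent subsequence of $(h_N^*, f_N^*)$ via compactness of $K$, $\mathcal{P}_2(K)$ and Arzel\`a--Ascoli, then combine the liminf inequality and the recovery sequence from Theorem \ref{thm:GammaConvergence} with the finite-level optimality of $u_N^*$ to conclude $J(u^*) \leq J(u)$ for all $u \in U$. Your closing caveat about needing a uniform bound and a common Lipschitz constant across the sequence of optimisers is well taken — the paper relies on the same implicit uniformity in Assumption \ref{ass:AdControl} when invoking Arzel\`a--Ascoli.
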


\begin{proof}
    This proof consists of two steps. Firstly, we show that there exists a limit $u^*$ for a subsequence of finite-dimensional optimal controls $u_N^*$. Secondly, we prove that this $u^*$ is indeed an optimal control for the mean-field limit. 
    
    Note that because $K$ is a compact subset of $\mathbb{R}^{d \times l}$, any sequence $(h_i)_{i\in \mathbb{N}}$, $h_i \in C([0,T];K)$, has a subsequence  $h_{i,n}$ converging uniformly to some $h^*$. From the Assumption \ref{ass:AdControl} and the Arzela-Ascoli theorem we have that any sequence $(f_i)_{i\in \mathbb{N}}$, for $f_i \in C(D \times \mathcal{P}_2(D); \mathbb{R}^l)$ has a uniformly converging subsequence to $f^*$. Therefore, there exists $u^* \in U$ (where $U$ is as defined in Assumption \ref{ass:AdControl}), such that the sequence of minimizers $u_N^* = h_N^*f_N^*$ of the finite optimal control problem  has a subsequence (which we do not relabel) converging to $u^*$, $u_N^* \to u^*$. This proves the first step. Next we show that the second step holds due to the properties of $\Gamma$-convergence. Theorem \ref{thm:GammaConvergence} yields the liminf inequality, i.e., for every
    subsequence $(u_N)_{N \in \mathbb{N}}$ which converges to $u^*$ in $U$ we have that $\liminf_{N \to \infty} J^N(u_N) \geq J(u^*)$; as well as the fact that $ J(u) = \lim_{N \to \infty} J^N(u)$ for all $u$. Consequently, for any $u \in U$, we obtain that
    \begin{align*}
        J(u^*) \leq \liminf_{N \to \infty} J^N(u_N^*) \leq \limsup_{N \to \infty} J^N(u_N^*) \leq \lim_{N \to \infty} J^N(u) = J(u) ,
    \end{align*}
    where the last inequality holds because $u_N^*$ is, by definition, a minimiser of $J^N$. From this it follows that the second step of the proof holds and we can conclude.
\end{proof}

While Theorem \ref{thm:GammaConvergence} holds on a general state space, the stochastic nature of the limiting dynamics means that convergence of the optimal controls is only guaranteed when the state space is compact. One possible way to relax this assumption would be to work on $\mathbb{R}^d$ and replace compactness by suitable coercivity and moment estimates. This would amount to proving equicoercivity of the family of optimization problems in a topology on controls and measures compatible with the Wasserstein distance. Such an extension is technically nontrivial, since the Brownian noise in the leader dynamics prevents pathwise boundedness. Alternative approaches, which have successfully established convergence results for optimal controls of McKean–Vlasov SDEs with common noise, include the use of relaxed controls \cite{Djete2022}, and the analysis of convergence of the associated value functions, characterised by Hamilton–Jacobi–Bellman equations \cite{Cardaliaguet2023}. Extending these results to the setting considered in this work is, however, beyond the scope of this manuscript.

\section{Numerical method for approximating the optimal control}
\label{sec:NumAlg}

In practise it is often not feasible to solve the finite particle or mean-field stochastic optimal control problem when the space of admissible controls $U$ is infinite-dimensional, as in Assumption \ref{ass:AdControl}. Instead, the optimal control is approximated by choosing a set of permissible controls $U$ which is spanned by a finite collection $(e_k(\cdot))_{k=1}^n$ of linearly independent elements. This reduces the stochastic optimal control problems to finite-dimensional (stochastic) optimisation problems. In this section we will discuss how to solve this reduced problem first for the finite-particle system and then for the mean-field limit.

\subsection[]{Approximating the $N$-particle control}

For the optimal control problem \eqref{eq:OC_problem} restricting $U$ to elements spanned by some linearly independent $(e_k)_{k=1}^n$, for $e_k : [0,T] \times \mathbb{R}^{d} \times  \mathcal{P}_2(\mathbb{R}^{d}) \to \mathbb{R}^d$ such that Assumption \ref{ass:AdControl} holds,
reduces the problem to the finite-dimensional optimisation problem 
\begin{equation}
    \label{eq:ResOC}
    \min_{a\in \mathbb{R}^n} J^N(u^a), \quad u^a(\cdot) := \sum_{k=1}^n a_k e_k(\cdot)
\end{equation}
subject to the system dynamics \eqref{eq:UncSys}. A gradient descent based algorithm for finding the solution to the reduced optimisation method was proposed in \cite{Hartmann_2012} for a deterministic and finite stopping time. This method was extended in \cite{Lie2021} to random stopping times. We recall the following result from \cite{Lie2021} in which the change of measure from Girsanov's theorem was used to derive formulas for the ($n$-th order) Fr\'echet derivatives of the cost functionals when the control acts on each agent (i.e. $N=0$). 
\begin{ass}
    \label{ass:Invertible_sigma}
    The coefficient $\sigma_Y$ is invertible and admits a constant $\alpha >0$ such that for all $t \in [0,T]$, $ x, y \in \mathbb{R}^d$ and $\mu \in \mathcal{P}_2(\mathbb{R}^d)$,
    \begin{equation*}
         x^T \left(\sigma_Y \sigma_Y^T \right)^{-1}(t,y,\mu) x \leq \alpha^{-2} |x|^2  .
    \end{equation*} 
\end{ass}

\begin{theorem} \label{thm:LieOriginal} \textnormal{\textbf{\cite[Corollary 5.2.]{Lie2021}}}
    Consider the optimal control problem \eqref{eq:OC_problem} when $N=0$ in \eqref{eq:UncSys} and the coefficients $c, u$ and $\sigma_Y$ are independent of their third argument, i.e.,
    \begin{equation}
        \label{eq:NoFollowers}
        dY_t^N  = (c+u)(t,Y_t^N) dt   + \sigma_Y(t,Y_t^N) dB_{t}^Y ,
    \end{equation}
    and similarly for the cost
    \begin{equation}
    J^N(u) = \mathbb{E} \left[ \int_0^T r (Y_t^N) dt +  \frac{\lambda}{2} \int_0^T |u(t,Y_t^N)|^2 dt  \right].
    \end{equation}
    Let Assumptions \ref{ass:Lipschitz}, \ref{ass:Coeff_square_bound}, \ref{ass:SDE_IC}, \ref{ass:costLipshtiz} and \ref{ass:Invertible_sigma} hold for a set of permissible controls $U$ spanned by linearly independent $(e_k(\cdot))_{k=1}^n$, and assume that $\phi_T\left(Y\right) := \int_0^T r (Y_t^N) dt \in L^2\left(\mathbb{P}\right)$ and is not a.s. constant. Then $J^N$ is strictly convex and there exists at most one $a^* \in \mathbb{R}^n$ such that the first Fr\'echet derivative of $J^N$ vanishes at $u_N^{a^*} \in U$.  Moreover, the first and second order partial derivatives of $a \mapsto J^N(u^a)$ are given by 
    \begin{align}
        D_{a_k} J^N\left(u^a\right) & = \mathbb{E}^{u^a} \left[ \left(\phi_T + \lambda \sigma_Y^2 \left( \frac{1}{2} \left(M_T^{u^a}\right)^2 + M_T^{u^a} \right) \right) M_T^{e_k} \right] \label{eq:FCDER} \\
        D_{a_k a_l} J^N\left(u^a\right) & = \mathbb{E}^{u^a} \left[ \left(\phi_T + \lambda \sigma_Y^2 \left( \frac{1}{2} \left(M_T^{u^a}\right)^2 + 2 M_T^{u^a} +1 \right) \right) M_T^{e_k} M_T^{e_l} \right] , \label{eq:SCDER}
    \end{align}
    where superscript $u^a$ of $\mathbb{E}$ indicates that the expectation is taken when the control $u^a$ is applied to the dynamics \eqref{eq:NoFollowers} and
    \begin{equation*}
         M_T^{e_k} := \int_0^T \sigma_Y^{-1} e_k\left(Y_t^N\right)  dB_{t}^Y .
    \end{equation*}    
\end{theorem}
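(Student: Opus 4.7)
The claim is a restatement of \cite[Corollary 5.2]{Lie2021}; the natural plan is to reproduce their argument. The key idea is to push all the $a$-dependence out of the state process via Girsanov's theorem and then differentiate under the expectation. I would let $\mathbb{P}^0$ denote the uncontrolled reference measure, under which $dY_t = c(t,Y_t)\,dt + \sigma_Y(t,Y_t)\,dB^0_t$ for a $\mathbb{P}^0$-Brownian motion $B^0$ (so the path of $Y$ is independent of $a$), and for each admissible $u^a = \sum_i a_i e_i$ set
\[
L^a := \frac{d\mathbb{P}^{u^a}}{d\mathbb{P}^0}\bigg|_{\mathcal{F}_T} = \exp\!\left(\tilde{M}_T^{u^a} - \tfrac{1}{2}\langle \tilde{M}^{u^a}\rangle_T\right), \qquad \tilde{M}_t^{v} := \int_0^t \sigma_Y^{-1} v(s,Y_s)\, dB^0_s .
\]
Novikov's criterion, applicable by Assumption \ref{ass:Invertible_sigma} and the boundedness of admissible controls in Assumption \ref{ass:AdControl}, yields that $L^a$ is a true martingale, so that $J^N(u^a) = \mathbb{E}^0\bigl[L^a\bigl(\phi_T + \tfrac{\lambda}{2}\int_0^T |u^a|^2\,dt\bigr)\bigr]$ with the $a$-dependence made fully explicit.

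Since $\tilde{M}^{u^a}$ is linear and $\langle \tilde{M}^{u^a}\rangle_T$ quadratic in $a$, logarithmic differentiation gives
\[
\partial_{a_k}\log L^a \;=\; \tilde{M}_T^{e_k} - \langle \tilde{M}^{u^a}, \tilde{M}^{e_k}\rangle_T \;=\; \int_0^T \sigma_Y^{-1} e_k\,dB^{u^a}_t \;=:\; M_T^{e_k},
\]
after identifying $B^{u^a} = B^0 - \int_0^\cdot \sigma_Y^{-1} u^a\,ds$ with the $B^Y$ of the statement. Applying the product rule, using $\int_0^T |u^a|^2\,dt = \sigma_Y^2\langle \tilde{M}^{u^a}\rangle_T$, converting back to $\mathbb{P}^{u^a}$ via $\mathbb{E}^{u^a}[\,\cdot\,] = \mathbb{E}^0[L^a\,\cdot\,]$, and rewriting quadratic-variation terms through the Itô identity $\mathbb{E}^{u^a}[\langle M^{u^a}, M^{e_k}\rangle_T] = \mathbb{E}^{u^a}[M_T^{u^a} M_T^{e_k}]$, then produces formula \eqref{eq:FCDER}. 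Differentiating a second time yields \eqref{eq:SCDER}, where the additional ``$+1$'' arises from the second mixed partial $\partial_{a_k a_l}\langle \tilde{M}^{u^a}\rangle_T = 2\int_0^T \sigma_Y^{-2} e_k e_l\,dt$. In both steps the interchange of derivative and expectation is justified by dominated convergence, with domination provided by $\phi_T\in L^2(\mathbb{P})$, the uniform exponential moments of $L^a$ from Novikov, and boundedness of $u^a$ and $e_k$.

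For strict convexity and uniqueness, I would combine the explicit Hessian \eqref{eq:SCDER} with the equivalent variational representation $J^N(u) = \mathbb{E}^{u}[\phi_T] + \lambda\sigma_Y^2\,\mathrm{KL}(\mathbb{P}^{u}\|\mathbb{P}^0)$, which follows from the Girsanov identity $\mathbb{E}^{u}\bigl[\int_0^T |u|^2\,dt\bigr] = 2\sigma_Y^2\,\mathrm{KL}(\mathbb{P}^{u}\|\mathbb{P}^0)$. Strict convexity of relative entropy, together with injectivity of $a\mapsto u^a$ (ensured by linear independence of the $(e_k)$), then force the Hessian to be strictly positive definite whenever $\phi_T$ is not $\mathbb{P}$-a.s.\ constant, so $J^N$ is strictly convex on $U$ and admits at most one critical point. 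The main technical obstacle sits in the derivative computation: keeping meticulous track of which stochastic integrals are $\mathbb{P}^0$- versus $\mathbb{P}^{u^a}$-martingales, and deploying the Itô cross-variation identities in the right order so that the formulas collapse exactly to \eqref{eq:FCDER}-\eqref{eq:SCDER} without residual drift-type corrections.
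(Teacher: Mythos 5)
The paper offers no proof of this statement---it is imported verbatim from \cite[Corollary 5.2]{Lie2021}---and your Girsanov-reweighting argument (express $J^N(u^a)$ under a fixed reference measure via the exponential density $L^a$, differentiate $\log L^a$ in $a$, and convert back to $\mathbb{P}^{u^a}$ with It\^o identities) is precisely the technique the paper attributes to that source, so your approach coincides with the intended one. The one step you should expand is the replacement of $\tfrac{1}{2}\langle M^{u^a}\rangle_T$ by $\tfrac{1}{2}\left(M_T^{u^a}\right)^2$ inside the expectation against $M_T^{e_k}$: unlike the cross term, this is a third-moment identity, and since the martingale $\int_0^{\cdot} M_s^{u^a}\,dM_s^{u^a}$ has nonzero covariation with $M^{e_k}$ when $e_k$ depends on $Y$, it does not follow from $\mathbb{E}\left[\langle M,N\rangle_T\right]=\mathbb{E}\left[M_T N_T\right]$ alone and needs the more careful bookkeeping you yourself flag as the main technical obstacle.
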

The explicit representations of the derivatives \eqref{eq:FCDER} and \eqref{eq:SCDER} can then be used in gradient-based method for finding $a^* \in \mathbb{R}^n$ such that the finite-dimensional optimisation problem \eqref{eq:ResOC} is solved for a given set of admissible controls \cite{Hartmann_2012, Lie2021}. This result can be adapted in a straightforward manner when $N \neq 0$, giving the following corollary.
\begin{cor}
    \label{Cor:LieAdapt}
    Consider the optimal control problem \eqref{eq:OC_problem} when $N \neq 0$ and the coefficients $b, \sigma_X, c, u, \sigma_Y$ and running cost $r$ depend on both the state of the leader $Y_t^N$ and the empirical measure of the followers $\mu_{X_t^N}$. Then the results of Theorem \ref{thm:LieOriginal} still hold when including $\mu_{X_t^N}$ as the second argument of $b, \sigma_X, c, u, \sigma_Y$ and $r$.
\end{cor}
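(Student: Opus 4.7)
The strategy is to repeat the Girsanov-based derivation of Theorem~\ref{thm:LieOriginal} verbatim, treating the empirical measure $\mu_{X_t^N}$ as an exogenous, $\mathcal{F}_t^X$-adapted stochastic coefficient that feeds into $c$, $u^a$, $\sigma_Y$ and $r$. The key structural observation is that the proof of \cite[Corollary 5.2]{Lie2021} only uses Girsanov's theorem on the leader's Brownian motion $B^Y$: since the follower noises $(B^i)_{i\le N}$ are independent of $B^Y$ (Assumption \ref{ass:SDE_IC}), they remain Brownian motions under any change of measure that shifts the drift of $B^Y$ alone, and in particular the follower SDEs — hence also the process $t\mapsto \mu_{X_t^N}$ — retain their joint dynamics.

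The first step is to set up the measure change. Let $\mathbb{P}^0$ denote the reference measure under which $Y^N$ evolves with $u\equiv 0$. Assumption \ref{ass:Invertible_sigma} together with the boundedness of $u^a$ guaranteed by Assumption \ref{ass:AdControl} ensures that Novikov's condition is satisfied with a bounded exponent, so the Doléans exponential
\begin{equation*}
Z_T^{u^a} \;=\; \exp\!\left(\int_0^T \sigma_Y^{-1}(t,Y_t^N,\mu_{X_t^N})\, u^a(t,Y_t^N,\mu_{X_t^N})\cdot dB_t^Y \;-\; \tfrac12\int_0^T \bigl|\sigma_Y^{-1} u^a\bigr|^2\, dt\right)
\end{equation*}
is a true martingale with moments of every order. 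Defining $d\mathbb{P}^{u^a}/d\mathbb{P}^0 = Z_T^{u^a}$, Girsanov shifts the drift of $B^Y$ precisely by $\sigma_Y^{-1} u^a$, so that under $\mathbb{P}^{u^a}$ the leader satisfies the controlled SDE \eqref{eq:UncSys} while the coupled follower system is unchanged in law.

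The second step is to rewrite the cost functional against the reference measure,
\begin{equation*}
J^N(u^a) \;=\; \mathbb{E}^{0}\!\left[Z_T^{u^a}\Bigl(\phi_T \;+\; \tfrac{\lambda}{2}\int_0^T |u^a(t,Y_t^N,\mu_{X_t^N})|^2\,dt\Bigr)\right], \qquad \phi_T := \int_0^T r(Y_t^N,\mu_{X_t^N})\,dt,
\end{equation*}
where now $Y^N$ and $\mu_{X^N}$ are the \emph{uncontrolled} processes. The dependence on $a\in\mathbb{R}^n$ is concentrated entirely in $u^a = \sum_i a_i e_i$, which enters linearly in the exponent of $Z_T^{u^a}$ and in the running quadratic penalty. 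Differentiating under the expectation (justified by dominated convergence using the $L^p$-bounds on $Z_T^{u^a}$ and on the $e_k$'s) and computing $\partial_{a_k} Z_T^{u^a}$, $\partial_{a_k a_l}^2 Z_T^{u^a}$ via the chain rule reproduces exactly the formulas \eqref{eq:FCDER}--\eqref{eq:SCDER}, now with
\begin{equation*}
M_T^{e_k} \;=\; \int_0^T \sigma_Y^{-1}(t,Y_t^N,\mu_{X_t^N})\, e_k(t,Y_t^N,\mu_{X_t^N})\, dB_t^Y ,
\end{equation*}
and all coefficients evaluated at the enlarged argument list. Strict convexity and uniqueness of the critical point $a^*$ follow verbatim from \cite{Lie2021}: the quadratic penalty contributes a strictly convex term in $a$, and the hypothesis that $\phi_T$ is not $\mathbb{P}^0$-a.s.\ constant rules out the degenerate case.

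The only step where anything genuinely new must be checked is the integrability package. The boundedness of $r$ and of the $e_k$'s (Assumption \ref{ass:AdControl}) gives $\phi_T\in L^2(\mathbb{P}^0)$ and $M_T^{e_k}\in L^p(\mathbb{P}^0)$ for all $p$, while $Z_T^{u^a}\in L^p(\mathbb{P}^0)$ for all $p$ thanks to Novikov with a bounded integrand. Once these moment bounds are in place, differentiation under the expectation and the Hölder estimates needed in \cite[Corollary 5.2]{Lie2021} go through unchanged. The main conceptual hurdle — making sure that the follower-dynamics do not interact badly with the Girsanov shift — is resolved by the independence of $B^Y$ from $(B^i)_{i\le N}$, so the proof reduces to routine bookkeeping on top of the reference result.
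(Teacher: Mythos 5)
Your proposal is correct and fills in exactly the adaptation the paper leaves implicit (the paper gives no proof of Corollary~\ref{Cor:LieAdapt}, asserting only that the extension is straightforward): the essential point is that the Girsanov density only tilts $B^Y$, so by independence the $(B^i)_{i\le N}$ remain Brownian motions, the follower SDEs retain their pathwise form, and the reweighting formula for $J^N$ against the uncontrolled reference dynamics carries over with $\mu_{X_t^N}$ simply appended to the argument lists. One phrase to tighten: the follower system is not literally ``unchanged in law'' under $\mathbb{P}^{u^a}$ (its law changes because the law of $Y^N$ does); what matters, and what you state correctly earlier, is that the structural equations and the Brownianity of the $B^i$ are preserved, so that under $\mathbb{P}^0$ the joint pair $(X^N,Y^N)$ has the law of the uncontrolled coupled system.
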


\begin{proof}
    The result follows formally by a perturbation argument based on Girsanov's theorem. For given admissible controls $u$ and $w$, the perturbed control $u+\epsilon w$ with $\epsilon \geq 0$ is considered. Girsanov's theorem allows one to rewrite the cost functional under the perturbed dynamics as the expectation under the original control $u$, weighted by the stochastic exponential from the Radon-Nikodym derivative. Differentiating with respect to $\epsilon$ and evaluating at $\epsilon=0$ yields the desired variational identity.
    This argument can be made rigorous by following the approach in \cite{Lie2021}. To this end, rewrite the dynamics \eqref{eq:UncSys} as the single SDE for $Z_t = (X^N_t,Y_t^N)$,
    \begin{equation*}
        dZ_t = (A+u)(t,Z_t) dt + \sigma(t,Z_t) dB^Z_t ,
    \end{equation*}
    where $A(t,Z_t)$ collects the uncontrolled drift components of the followers and leader, the control $u$ takes values in $\left\{0 \right\}^{N\times d} \times \mathbb{R}^{d}$, $B^Z_t = (B^1_t, \dots, B^N_t, B_t^Y)$, and \begin{equation}
    \sigma = 
    \begin{pmatrix}
      \sigma_X & 0_{(Nd)d} \\
      0_{d (Nd)} & \sigma_Y
    \end{pmatrix} ,
    \end{equation}
    where $0_{(Nd)d}$ is the zero matrix with $Nd$ rows and $d$ columns. With this reformulation, the computations in \cite{Lie2021} apply directly, with the only modification that the inversion of diffusion coefficients is required solely for the controlled components, namely $\sigma_Y$.
\end{proof}

\begin{remark}
    \label{rem:NoInvert}
     Since there are no assumptions on the invertibility of $\sigma_X$, it could be identically zero, i.e. the dynamics of $X^{i,N}$ are governed by ordinary (not stochastic) differential equations. This observation will be relevant in the following subsection where we numerically approximate the optimal control of the non-linear Fokker-Planck equation coupled to an SDE which arises in the mean-field limit.
\end{remark}

\subsection{Approximating the mean-field control}

For the mean-field optimal control problem we would ideally like to apply a similar methodology as for the finite particle system. 
Such a derivation is indeed possible at the level of the conditional McKean–Vlasov system, as explained in Remark \ref{rem:Girsanov4Vlasov} below. Nevertheless, since the coupled PDE/SDE formulation is particularly well suited to the first-discretise-then-optimise strategy used below, we will stick to PDE/SDE formulation and study the optimal control problem for the coupled system  
\begin{equation}
\label{eq:PDE_SDE}
\begin{aligned}
        \partial_t g_t &= - \nabla_x \cdot ( b(t,x, \bar{Y}_t,g_t)g_t) + \frac{1}{2} \sum_{i,j = 1}^d \partial_{x_i} \partial_{x_j} (a_{ij}(t,x,\bar{Y}_t,g_t) g_t ) ,  \\[5pt]
        d\bar{Y}_t & = (c+u)(t,\bar{Y}_t,g_t) dt + \sigma_Y(t,\bar{Y}_t,g_t)dB^Y_{t} ,
\end{aligned}
\end{equation}
instead of the McKean-Vlasov system \eqref{eq:BasicMcKeanSDE}. Here $a := \sigma_X \sigma_X^T$. The PDE for $g$ is commonly known as a non-linear Fokker-Planck equation. Let us note that although $g$ is a solution to a PDE, it is still a stochastic process due to the coupling with $\bar{Y}$. In this subsection we show that for the solutions $(\bar{X},\bar{Y})$ of \eqref{eq:BasicMcKeanSDE}, the conditional law $Law(\bar{X}_t | \mathcal{F}_{t}^Y)$ and $\bar{Y}$ are the unique solution of \eqref{eq:PDE_SDE} (see Theorem \ref{thm:PDE_McKean_Equiv}). Since the optimal control problems we consider for the mean-field limit depend only on $g$ and $\overline{Y}$ we can therefore solve them by considering the PDE/SDE system \eqref{eq:PDE_SDE} instead of the McKean-Vlasov SDE \eqref{eq:BasicMcKeanSDE}. As such we will also discuss our numerical algorithm to solve the finitely based optimal control problem under the dynamics of \eqref{eq:PDE_SDE}. 
Throughout this section we denote the collection of all finite signed Borel measures on $\mathbb{R}^d$ by $\mathcal{M}(\mathbb{R}^d)$, and for a measurable function $\varphi$ and $\mu \in \mathcal{M}(\mathbb{R}^d)$ let $\inner{\varphi}{\mu}$ be the integral of a $\varphi$ with respect to $\mu$.

\begin{defn}
    \label{def:PDE_sol}
    A continuous, $(\mathcal{M}(\mathbb{R}^d),\mathbb{R}^d)$-valued, $\mathbb{F}$-adapted process $(g,\bar{Y})$ is said to be an analytically weak solution of \eqref{eq:PDE_SDE} for the initial data $(g_0,Y_0)$, where $g_0$ is fixed and $Y_0$ may be random, if, for any $t \in [0,T]$ and $\varphi \in C_b^2 (\mathbb{R}^d)$, a.s.,
    \begin{equation}
    \label{eq:WeakPDE}
    \begin{aligned}
        \int_{\mathbb{R}^d} \varphi(x) g_t(dx) = & \int_{\mathbb{R}^d} \varphi(x) g_0(dx) + \int_0^t \int_{\mathbb{R}^d} \nabla_x \varphi(x) \cdot b(s,x,\bar{Y}_s,g_s) g_s(dx) ds \\ 
        &+ \int_0^t \int_{\mathbb{R}^d} \frac{1}{2} \sum_{i,j = 1}^d a_{ij}(s,x,\bar{Y}_s,g_s)  \partial_{x_i} \partial_{x_j} \varphi(x) g_s(dx) ds ,
    \end{aligned}
    \end{equation} 
    and the SDE for $Y$ in \eqref{eq:PDE_SDE} holds with the given initial conditions.
\end{defn}

To begin let us show that for a solution to the conditional McKean-Vlasov system \eqref{eq:BasicMcKeanSDE}, $(\bar{X}, \bar{Y}) $, the conditional law $g_t = Law(\bar{X}_t | \mathcal{F}_{t}^Y)$ and dynamics of the leader $\bar{Y}$ are an analytically weak solution to \eqref{eq:PDE_SDE}. We recall the following technical lemma (which can be found in \cite[Lemma 5.1.]{CarmonaZhu2016} or \cite[Lemma B.3.1.]{HammersleyThesis}) which allows us to exchange conditional expectations and integration.

\begin{lemma}
    \label{lem:Cond_Fubini} \textnormal{\textbf{\cite[Lemma 5.1.]{CarmonaZhu2016}}}
    If $H$ is an $(\mathcal{F}_t)_{t\geq 0}$-progressively measurable, square integrable process, and $B^X$ and $B^Y$ are independent Brownian motions, then 
    \begin{align*}
        & \mathbb{E} \left[  \int_0^t H_s dB^Y_s \middle| \mathcal{F}_t^Y  \right] = \int_0^t \mathbb{E} \left[  H_s  \middle| \mathcal{F}^Y_s  \right] dB^Y_s , \\
        & \mathbb{E} \left[  \int_0^t H_s dB^X_s \middle| \mathcal{F}_t^Y  \right] = 0 , \\
        & \mathbb{E} \left[  \int_0^t H_s ds \middle| \mathcal{F}_t^Y  \right] = \int_0^t \mathbb{E} \left[  H_s  \middle| \mathcal{F}^Y_s  \right] ds .
    \end{align*}
\end{lemma}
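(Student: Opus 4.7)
The plan is to prove all three identities by first establishing them for piecewise-constant elementary processes $H$ and then extending by a standard density/isometry argument to general $\mathbb{F}$-progressively measurable square-integrable $H$. The central technical ingredient, from which everything else follows routinely, is a \emph{conditional-independence} observation that exploits the independence of $B^X$ and $B^Y$.

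Specifically, I would first prove the following auxiliary fact: if $Z \in L^1$ is $\mathcal{F}_s$-measurable, then
\begin{equation*}
    \mathbb{E}[Z \mid \mathcal{F}_t^Y] = \mathbb{E}[Z \mid \mathcal{F}_s^Y], \quad 0 \leq s \leq t.
\end{equation*}
To see this, decompose $\mathcal{F}_t^Y = \mathcal{F}_s^Y \vee \mathcal{H}_{s,t}$, where $\mathcal{H}_{s,t} := \sigma(B^Y_r - B^Y_s : s \leq r \leq t)$. Because $B^X, B^Y$ are independent Brownian motions and the initial data are independent of both, $\mathcal{H}_{s,t}$ is independent of the full past $\mathcal{F}_s$, and in particular of $\sigma(Z) \vee \mathcal{F}_s^Y$. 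A monotone-class argument against test sets $A \cap B$ with $A \in \mathcal{F}_s^Y$ and $B \in \mathcal{H}_{s,t}$ then shows that $\mathbb{E}[Z \mid \mathcal{F}_s^Y]$ already satisfies the defining property of $\mathbb{E}[Z \mid \mathcal{F}_t^Y]$.

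Granted this, the three identities reduce to routine calculations on elementary processes $H_s = \sum_{i=0}^{n-1} \xi_i \mathbf{1}_{(t_i, t_{i+1}]}(s)$ with bounded $\xi_i \in \mathcal{F}_{t_i}$. For the first identity, the increment $B^Y_{t_{i+1}} - B^Y_{t_i}$ is $\mathcal{F}_t^Y$-measurable so pulls out of the conditional expectation, leaving $\mathbb{E}[\xi_i \mid \mathcal{F}_t^Y] = \mathbb{E}[\xi_i \mid \mathcal{F}_{t_i}^Y]$ by the auxiliary fact, which is exactly the $i$-th summand of the simple-process approximation to $\int_0^t \mathbb{E}[H_s \mid \mathcal{F}_s^Y]\, dB^Y_s$. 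For the second identity, condition first on the larger $\sigma$-algebra $\mathcal{F}_{t_i} \vee \mathcal{F}_t^Y$; the increment $B^X_{t_{i+1}} - B^X_{t_i}$ is independent of this $\sigma$-algebra (Brownian increments are independent of the past, and $B^X \perp B^Y$), so its conditional mean vanishes, and the tower property gives zero. The third identity is a direct Fubini exchange, justified by the $L^2$ assumption, followed by another application of the auxiliary fact to replace $\mathbb{E}[H_s \mid \mathcal{F}_t^Y]$ by $\mathbb{E}[H_s \mid \mathcal{F}_s^Y]$.

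Finally, I would extend from elementary to general $H$ by approximating in $L^2(\Omega \times [0,T])$. For the two stochastic-integral identities, Itô's isometry delivers $L^2$-convergence on the left, and on the right Itô's isometry is applied again to the $\mathbb{F}^Y$-adapted integrand $\mathbb{E}[H_s \mid \mathcal{F}_s^Y]$ (whose $L^2$-norm is controlled by that of $H$ via the $L^2$-contractivity of conditional expectation), so passing to the limit preserves the equalities. For the Lebesgue-integral identity, Cauchy--Schwarz together with $L^1$-contractivity of conditional expectation suffices. The only non-routine obstacle is the auxiliary fact at the start: making precise that the ``future $B^Y$-increments'' carry no information about $\mathcal{F}_s$-measurable quantities given $\mathcal{F}_s^Y$. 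Once this is cleanly isolated via $\mathcal{H}_{s,t} \perp \mathcal{F}_s$ — which is where the hypothesis $B^X \perp B^Y$ is genuinely used — the rest is standard stochastic-analysis bookkeeping.
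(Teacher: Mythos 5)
Your proof is correct, but note that the paper does not actually prove this lemma: it is imported verbatim from \cite[Lemma 5.1]{CarmonaZhu2016}, so there is no in-paper argument to compare against. Your blind reconstruction follows the standard (and, as far as the cited reference goes, essentially the intended) route: the auxiliary fact $\mathbb{E}[Z\mid\mathcal{F}_t^Y]=\mathbb{E}[Z\mid\mathcal{F}_s^Y]$ for $\mathcal{F}_s$-measurable $Z$, which rests on the independence of the future $B^Y$-increments $\mathcal{H}_{s,t}$ from $\mathcal{F}_s$ (this is exactly where $B^X\perp B^Y$ and the independence of the initial data enter), followed by verification on elementary processes and an $L^2$-isometry passage to the limit. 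All three identities are handled correctly: the pull-out of the $\mathcal{F}_t^Y$-measurable increment in the first, the conditioning on the enlarged $\sigma$-algebra $\mathcal{F}_{t_i}\vee\mathcal{F}_t^Y$ followed by the tower property in the second, and conditional Fubini in the third. The only point worth making explicit in a full write-up is that the right-hand sides require a jointly measurable, $\mathbb{F}^Y$-progressively measurable version of $s\mapsto\mathbb{E}[H_s\mid\mathcal{F}_s^Y]$ (the optional projection of $H$ onto $\mathbb{F}^Y$) for the stochastic and Lebesgue integrals to be well defined; this is standard but is the one piece of ``bookkeeping'' that is not entirely automatic.
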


\begin{lemma}
    \label{lem:SolPDE}
    Let Assumptions \ref{ass:Lipschitz}, \ref{ass:Coeff_square_bound} and \ref{ass:SDE_IC} hold, such that \eqref{eq:BasicMcKeanSDE} has a unique solution $(\bar{X},\bar{Y})$. Then $(g,\bar{Y})$, where $g_t = Law(\bar{X}_t | \mathcal{F}_{t}^Y)$, is a solution of \eqref{eq:PDE_SDE}.
\end{lemma}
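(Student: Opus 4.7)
The proof reduces to verifying the weak form \eqref{eq:WeakPDE} for $g_t := Law(\bar{X}_t | \mathcal{F}_t^Y)$, since the SDE for $\bar{Y}$ in \eqref{eq:PDE_SDE} is satisfied by construction, and the continuity in $t$ together with $\mathbb{F}$-adaptedness of $(g_t,\bar{Y}_t)$ follow from the path continuity of $\bar{X}$, the moment bounds of Proposition \ref{prop:well_posedness}, and standard measurability properties of regular conditional distributions. The plan is to apply It\^o's formula to $\varphi(\bar{X}_t)$ for an arbitrary $\varphi \in C_b^2(\mathbb{R}^d)$ and then take the conditional expectation given $\mathcal{F}_t^Y$.

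Concretely, It\^o's formula applied to the $\bar{X}$-equation in \eqref{eq:BasicMcKeanSDE} expresses $\varphi(\bar{X}_t) - \varphi(\bar{X}_0)$ as the sum of a drift integral involving $\nabla \varphi \cdot b$, a stochastic integral against $dB^X$, and a second-order integral involving $\tfrac12 \sum_{i,j} a_{ij}\,\partial_{x_i}\partial_{x_j}\varphi$. Conditioning on $\mathcal{F}_t^Y$ and applying Lemma \ref{lem:Cond_Fubini}, the $dB^X$-martingale vanishes, while the conditional expectation can be passed inside the two Lebesgue integrals.

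The crux is then to identify each resulting conditional integrand with an integral against $g_s$. For $s \leq t$, I would use the standard independence argument: $\bar{X}_s$ is measurable with respect to $\sigma(B_r^X,\,r\leq s) \vee \mathcal{F}_s^Y \vee \sigma(\bar{X}_0)$, and by Assumption \ref{ass:SDE_IC} the increments of $B^Y$ after time $s$ are independent of this $\sigma$-algebra, so $\mathbb{E}[F(\bar{X}_s)\mid\mathcal{F}_t^Y] = \mathbb{E}[F(\bar{X}_s)\mid\mathcal{F}_s^Y]$ for any bounded measurable $F$. Since $(\bar{Y}_s, g_s)$ is $\mathcal{F}_s^Y$-measurable, the maps $x \mapsto \nabla\varphi(x)\cdot b(s,x,\bar{Y}_s,g_s)$ and $x \mapsto \tfrac12 \sum_{i,j} a_{ij}(s,x,\bar{Y}_s,g_s)\partial_{x_i}\partial_{x_j}\varphi(x)$ integrate against $g_s(dx)$ to give precisely the integrands appearing in \eqref{eq:WeakPDE}. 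The endpoint contributions are handled in the same way: $\mathbb{E}[\varphi(\bar{X}_t)\mid\mathcal{F}_t^Y] = \inner{\varphi}{g_t}$ by definition of $g_t$, and $\mathbb{E}[\varphi(\bar{X}_0)\mid\mathcal{F}_t^Y] = \inner{\varphi}{g_0}$ by independence of $\bar{X}_0$ from $\mathcal{F}_t^Y$ under Assumption \ref{ass:SDE_IC}.

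The main technical obstacle is the careful bookkeeping required to justify these exchanges of conditional expectation and integration — especially because the coefficients depend on the $\mathcal{F}_s^Y$-measurable random measure $g_s$, so one has to verify joint measurability of the integrand as a function of $(s,x,\omega)$ before invoking the disintegration formula for $g_s$. The required integrability is supplied by the boundedness of $\varphi$ and its first two derivatives combined with the growth and Lipschitz hypotheses of Assumptions \ref{ass:Lipschitz} and \ref{ass:Coeff_square_bound} and the moment estimates of Proposition \ref{prop:well_posedness}; the conditional Fubini statement of Lemma \ref{lem:Cond_Fubini} then handles the conditioning inside the time integrals.
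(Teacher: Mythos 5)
Your proposal is correct and follows essentially the same route as the paper's proof: It\^o's formula applied to $\varphi(\bar{X}_t)$, conditioning on $\mathcal{F}_t^Y$ via Lemma \ref{lem:Cond_Fubini} to kill the $dB^X$ martingale and push the conditional expectation inside the time integrals, and then identifying the conditional expectations with integrals against $g_s$. The extra measurability and independence bookkeeping you supply (e.g.\ $\mathbb{E}[F(\bar{X}_s)\mid\mathcal{F}_t^Y]=\mathbb{E}[F(\bar{X}_s)\mid\mathcal{F}_s^Y]$ and the disintegration against the $\mathcal{F}_s^Y$-measurable pair $(\bar{Y}_s,g_s)$) is exactly what the paper leaves implicit, so there is no substantive difference.
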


\begin{proof}
    Given $(\bar{X},\bar{Y})$ we have by It\^{o}'s formula that for any $\varphi \in C_b^2 (\mathbb{R}^d)$
    \begin{equation*}
        \begin{aligned}
        \varphi(\bar{X}_t) = & \varphi(\bar{X}_0) + \int_0^t \nabla_x \varphi(\bar{X}_s) \cdot b(s,\bar{X}_s,\bar{Y}_s,g_s) + \frac{1}{2} \sum_{i,j = 1}^d a_{ij}(s,\bar{X}_s,\bar{Y}_s,g_s)  \partial_{x_i} \partial_{x_j} \varphi(\bar{X}_s) ds \\
        & + \int_0^t  \nabla_x \varphi(\bar{X}_s) \cdot \sigma_X(s,\bar{X}_s,\bar{Y}_s,g_s)  dB^X_{s} .
        \end{aligned}
    \end{equation*}
    Taking expectations conditional on $\mathcal{F}_t^Y$, applying Lemma \ref{lem:Cond_Fubini} and using that $B^X$ is independent of $Y$ gives
    \begin{equation*}
    \begin{aligned}
        \mathbb{E} \left[ \varphi(\bar{X}_t) \middle\vert\ \mathcal{F}_t^Y  \right]  = & \mathbb{E} \left[ \varphi(\bar{X}_0)  \right]  + \int_0^t \mathbb{E} \left[ \nabla_x \varphi(\bar{X}_s) \cdot b(s,\bar{X}_s,\bar{Y}_s,g_s)  \middle\vert\ \mathcal{F}^Y_{s}  \right] ds \\
         & + \int_0^t \mathbb{E} \left[ \frac{1}{2} \sum_{i,j = 1}^d a_{ij}(s,\bar{X}_s,\bar{Y}_s,g_s)  \partial_{x_i} \partial_{x_j} \varphi(\bar{X}_s) \middle\vert\ \mathcal{F}^Y_{s}  \right] ds .  
    \end{aligned}
    \end{equation*} 
    Since $g_t = Law(\bar{X}_t | \mathcal{F}_t^Y)$, we obtain \eqref{eq:WeakPDE}. It is then clear that $\bar{Y}$ is also a solution of the SDE component of \eqref{eq:PDE_SDE}.
\end{proof}

Although we have shown that a solution to the McKean-Vlasov SDE \eqref{eq:BasicMcKeanSDE} yields a solution to the associated PDE/SDE system, it remains to establish uniqueness of this solution. We prove uniqueness in Theorem \ref{thm:PDE_McKean_Equiv} by following the work of \cite{Kurtz1999}. In that work there is no leader present, but there is a source of common noise in the dynamics of the followers, and instead of a coupled PDE/SDE system, a stochastic PDE is studied. The proof strategy is to first freeze the non-linear arguments in \eqref{eq:PDE_SDE} for a given solution to obtain a linear PDE. By proving uniqueness for the linear PDE and using the uniqueness result for the McKean-Vlasov SDE the uniqueness of the non-linear PDE/SDE system \eqref{eq:PDE_SDE} then follows. 

For fixed $\mathcal{M}(\mathbb{R}^d)$ and $\mathbb{R}^d$ valued process $V$ and $Y$ we consider the linear PDE for $U(t)$ given by 
\begin{equation}
    \label{eq:FrozenPDE}
    \inner{\varphi}{U(t)} = \inner{\varphi}{U(0)} + \int_0^t \inner{\nabla_x \varphi(x) \cdot b_s + \frac{1}{2} \sum_{i,j = 1}^d a_{ij,s}  \partial_{x_i} \partial_{x_j} \varphi(x) }{U(s)} ds ,
\end{equation}
where
\begin{equation*}
    \begin{aligned}
        b_s = b(s,x,Y_s,V_s), \quad a_{ij,s} = a_{ij}(s,x,Y_s,V_s) .
    \end{aligned}
\end{equation*}
To show that this linear PDE has a unique solution we will transform a $\mathcal{M}(\mathbb{R}^d)$ valued process into a $L^2(\mathbb{R}^d)$ valued process. Taking some $v \in \mathcal{M}(\mathbb{R}^d)$ and $\delta >0$ define 
\begin{equation}
    (T_\delta v) := \int_{\mathbb{R}^d} G_\delta(x-y) v(dy),
\end{equation}
where $G_\delta(x) = (2 \pi \delta)^{-\frac{d}{2}} \exp\left( - |x|^2/2 \delta \right)$
is the heat kernel. The following assumption is necessary to derive uniform bounds on the frozen PDE \eqref{eq:FrozenPDE}.
\begin{ass}
    \label{ass:Bound}
    The coefficients $b$ and $\sigma_X$ are bounded.
\end{ass}

The following lemma, which characterizes solutions of the linear PDE, is proved in Appendix \ref{sec:Appendix3}.
\begin{lemma}
    \label{lem:UniBoundsLinear}
    Under Assumptions \ref{ass:Lipschitz}, \ref{ass:Coeff_square_bound} and \ref{ass:Bound} if $U$ is an $\mathcal{M}_+(\mathbb{R}^d)$ valued solution of \eqref{eq:FrozenPDE} for fixed $U(0) \in L^2(\mathbb{R}^d)$, then $U(t) \in L^2(\mathbb{R}^d)$ almost surely and for all $t \geq 0$
    \begin{equation}
        \label{eq:LinUniBound}
        \mathbb{E}[\norm{U(t)}_{L^2}^2  ] \leq C \norm{U(0)}_{L^2}^2   <  \infty   .    
    \end{equation}
    
\end{lemma}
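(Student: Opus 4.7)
The plan is to follow the regularization approach of \cite{Kurtz1999}: we smooth the measure-valued process $U(t)$ by the heat kernel to produce an $L^2$-valued process, derive an evolution equation for its $L^2$-norm, obtain a Grönwall estimate whose constants are uniform in the smoothing parameter $\delta$, and then send $\delta\to 0$ to transfer the bound to $U$ itself.

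Concretely, for each $\delta > 0$ set $u_\delta(t,x) := (T_\delta U(t))(x) = \int G_\delta(x-y)\,U(t,dy)$. Since $G_\delta(x-\cdot) \in C_b^2(\mathbb{R}^d)$ for every fixed $x$, I can substitute $\varphi(y)=G_\delta(x-y)$ into the weak equation \eqref{eq:FrozenPDE} to obtain an explicit integral equation for $u_\delta(t,x)$ involving $T_\delta(b_s U(s))$ and $T_\delta(a_{ij,s} U(s))$. Because $G_\delta$ is smooth and the coefficients $b$, $a$ are bounded by Assumption \ref{ass:Bound} and Lipschitz by Assumption \ref{ass:Lipschitz}, $u_\delta(t,\cdot)$ lies in $L^2(\mathbb{R}^d)$ for each $t$ (it is in fact in every Sobolev space) and the map $t\mapsto u_\delta(t,\cdot)$ is differentiable in $L^2$ with
\begin{equation*}
\partial_t u_\delta(t,x) = -\nabla_x \cdot T_\delta(b_t U(t))(x) + \tfrac{1}{2}\sum_{i,j}\partial_{x_i}\partial_{x_j} T_\delta(a_{ij,t} U(t))(x).
\end{equation*}

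Next I would compute $\tfrac{d}{dt}\|u_\delta(t)\|_{L^2}^2 = 2\langle u_\delta(t), \partial_t u_\delta(t)\rangle_{L^2}$, integrate by parts, and split each term into the "commutator" part plus the part where the coefficient is pulled outside $T_\delta$. The drift contribution is controlled by $\|b\|_\infty \|u_\delta\|_{L^2} \|\nabla u_\delta\|_{L^2}$-type quantities, which by Young's inequality absorb into a diffusive term; the diffusion contribution yields a nonpositive leading order piece $-\tfrac{1}{2}\int a_{ij} \partial_i u_\delta \partial_j u_\delta$ modulo commutator remainders. Using the Lipschitz continuity of $a$ and $b$ together with standard mollifier commutator estimates (of the form $\|[T_\delta,f]g\|_{L^2}\leq C\|f\|_{\mathrm{Lip}}\|g\|_{L^1\cap L^2}$), I would control the remainders by $C(1+\|u_\delta(t)\|_{L^2}^2)$ with a constant independent of $\delta$. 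This yields
\begin{equation*}
\mathbb{E}\bigl[\|u_\delta(t)\|_{L^2}^2\bigr] \le \mathbb{E}\bigl[\|T_\delta U(0)\|_{L^2}^2\bigr] + C\int_0^t \mathbb{E}\bigl[\|u_\delta(s)\|_{L^2}^2\bigr]\,ds,
\end{equation*}
and Grönwall gives $\mathbb{E}[\|u_\delta(t)\|_{L^2}^2]\le e^{Ct}\mathbb{E}[\|T_\delta U(0)\|_{L^2}^2] \le e^{Ct}\mathbb{E}[\|U(0)\|_{L^2}^2]$, the last step because $T_\delta$ is a contraction on $L^2$.

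Finally I would pass to the limit $\delta\to 0$. Since $U(0)\in L^2$, $T_\delta U(0)\to U(0)$ in $L^2$ a.s. For $t>0$, by Fatou's lemma applied to the nonnegative functionals $\|u_\delta(t)\|_{L^2}^2$ (or by weak-$*$ compactness in $L^2$ combined with lower semicontinuity of the norm), we obtain $U(t)\in L^2(\mathbb{R}^d)$ almost surely and $\mathbb{E}[\|U(t)\|_{L^2}^2]\le e^{Ct}\mathbb{E}[\|U(0)\|_{L^2}^2]$, which is \eqref{eq:LinUniBound}. The main obstacle I expect is establishing the uniform-in-$\delta$ commutator bounds for the second-order term with non-constant, merely Lipschitz diffusion coefficient $a$: the worst case is the double derivative hitting $T_\delta(a_{ij}U)$, and the argument requires carefully rewriting this as $\partial_i\partial_j T_\delta(a_{ij}U) = a_{ij}\partial_i\partial_j u_\delta + [\partial_i\partial_j T_\delta, a_{ij}]U$ and absorbing the commutator into a Grönwall term via integration by parts rather than estimating it pointwise.
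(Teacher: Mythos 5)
Your proposal follows essentially the same route as the paper: mollify $U$ with the heat kernel $T_\delta$, derive an evolution inequality for the $L^2$-norm of $T_\delta U(t)$ with constants uniform in $\delta$, close it by Gr\"onwall, and pass to the limit $\delta\to 0$ via Fatou's lemma; the only difference is that the paper imports the key drift and diffusion estimates as black boxes from \cite[Lemmas 3.2 and 3.3]{Kurtz1999} rather than re-deriving them. One caution if you carry out the commutator estimates yourself: a bound of the form $\norm{[T_\delta,f]g}_{L^2}\leq C\norm{f}_{\mathrm{Lip}}\norm{g}_{L^1\cap L^2}$ is circular here because $g=U(s)$ is a priori only a measure, so every remainder must be controlled by $\norm{T_\delta(|U(s)|)}_{L^2}$ (which for a positive measure equals $\norm{u_\delta(s)}_{L^2}$, so the Gr\"onwall argument still closes) --- this is precisely what the cited lemmas of \cite{Kurtz1999} provide.
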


\begin{cor}
    \label{cor:LinPDEunique}
    Under the same assumptions as Lemma \eqref{lem:UniBoundsLinear}, if, for the fixed initial data $U(0) >0$ and $U(0) \in L^2(\mathbb{R}^d)$, then \eqref{eq:FrozenPDE} has at most one $\mathcal{M}_+(\mathbb{R}^d)$ valued solution.
\end{cor}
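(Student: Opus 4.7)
The strategy is to combine linearity of the frozen equation \eqref{eq:FrozenPDE} with the a priori $L^2$-estimate of Lemma \ref{lem:UniBoundsLinear}, in order to conclude that the difference of any two $\mathcal{M}_+$-valued solutions vanishes identically.

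Let $U_1$ and $U_2$ be two $\mathcal{M}_+(\mathbb{R}^d)$-valued solutions of \eqref{eq:FrozenPDE} sharing the initial datum $U(0) \in L^2(\mathbb{R}^d)$. By Lemma \ref{lem:UniBoundsLinear} applied separately to each, both $U_1(t)$ and $U_2(t)$ admit $L^2(\mathbb{R}^d)$-densities almost surely, so their difference $W(t) := U_1(t) - U_2(t)$ is a signed-measure-valued adapted process possessing an $L^2$-density and satisfying $W(0) = 0$. Because the coefficients $b_s$ and $a_{ij,s}$ in \eqref{eq:FrozenPDE} are frozen (they depend on the fixed processes $Y$ and $V$ but not on $U$), the identity is linear in $U$, and hence $W$ itself satisfies the same weak equation for every test function $\varphi \in C_b^2(\mathbb{R}^d)$.

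The next step is to rerun the energy estimate underlying Lemma \ref{lem:UniBoundsLinear} for $W$ in place of a positive-measure solution. Concretely, I would regularise by the heat kernel $T_\delta$, so that $T_\delta W \in C^\infty(\mathbb{R}^d) \cap L^2(\mathbb{R}^d)$ is a classical function, derive from the weak formulation a classical evolution equation for $T_\delta W$, compute $\tfrac{d}{dt}\norm{T_\delta W(t)}_{L^2}^2$ via integration by parts, and close the estimate using the boundedness of $b$ and $\sigma_X$ from Assumption \ref{ass:Bound}. As every manipulation in the proof of Lemma \ref{lem:UniBoundsLinear} is linear in the underlying measure, the same Gr\"onwall-type argument yields
$$\mathbb{E}\bigl[\norm{T_\delta W(t)}_{L^2}^2\bigr] \;\leq\; C\,\mathbb{E}\bigl[\norm{T_\delta W(0)}_{L^2}^2\bigr] \;+\; \varepsilon(\delta),$$
where the remainder $\varepsilon(\delta) \to 0$ as $\delta \downarrow 0$ by standard mollifier commutator estimates. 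Since $W(0) = 0$, sending $\delta \downarrow 0$ gives $\mathbb{E}\bigl[\norm{W(t)}_{L^2}^2\bigr] = 0$ for every $t \in [0,T]$, and hence $U_1 \equiv U_2$.

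The main obstacle I anticipate is verifying that the proof of Lemma \ref{lem:UniBoundsLinear}, which is formulated for $\mathcal{M}_+$-valued solutions, transports verbatim to the signed object $W$. Positivity enters that lemma only to guarantee that $T_\delta U$ is a nonnegative density on which one can perform standard $L^2$ manipulations; once both $T_\delta U_1$ and $T_\delta U_2$ are known to lie in $L^2$ via the lemma itself, the subsequent integration-by-parts, Cauchy--Schwarz, and Gr\"onwall steps are purely linear and depend only on the boundedness of the frozen coefficients, so they apply directly to $W$ without modification.
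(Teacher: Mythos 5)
Your proposal follows essentially the same route as the paper: form the difference $W = U_1 - U_2$, use linearity of the frozen equation, and invoke the $L^2$ energy estimate of Lemma \ref{lem:UniBoundsLinear} together with Gr\"onwall to conclude $W \equiv 0$ (the paper simply applies \eqref{eq:LinUniBound} to the difference directly). Your extra care in transporting the estimate to the signed measure $W$ is warranted, though the place where positivity really enters the lemma is the identification $\norm{T_\delta |U_s|}_{L^2} = \norm{T_\delta U_s}_{L^2}$ in the Gr\"onwall closure rather than the nonnegativity of the mollified density; for a signed $W$ with $L^2$ density this identification is recovered only after sending $\delta \to 0$, which is exactly what your remainder term $\varepsilon(\delta)$ anticipates.
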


\begin{proof}
Let us assume two solutions with the same initial data $U_1$ and $U_2$ exist. Because the PDE is linear we have that $U_3(t) = U_2(t) - U_1(t)$ is also a solution, with $U_3(0) = 0$ and $U_3(t)\in L^2(\mathbb{R}^d)$ a.s. From \eqref{eq:LinUniBound} we have
\begin{equation}
    \mathbb{E} \left[ \norm{U_3(t)}^2_{L^2}   \right] \leq  \norm{U_3(0)}^2_{L^2}   e^{Ct} = 0 ,
\end{equation}
and that therefore $U_1(t) = U_2(t)$ a.s. for all $t$.
\end{proof}

We can now show the uniqueness of the solution of \eqref{eq:PDE_SDE}.

\begin{theorem}
    \label{thm:PDE_McKean_Equiv}
    Suppose that \ref{ass:Lipschitz}, \ref{ass:Coeff_square_bound}, \ref{ass:SDE_IC} and \ref{ass:Bound} hold and additionally $g_0 \in L^2(\mathbb{R}^d)$. Then \eqref{eq:PDE_SDE} has a unique analytically weak solution $(g,\bar{Y})$ in the sense of Definition \ref{def:PDE_sol}, such that for all $t\in [0,T]$, $g_t \in L^2(\mathbb{R}^d)$ $\mathbb{P}$-a.s. 
\end{theorem}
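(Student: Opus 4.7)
The plan is to handle existence and uniqueness separately, using Lemma \ref{lem:SolPDE} and Proposition \ref{prop:well_posedness} to produce a candidate solution, and then invoking the linear PDE uniqueness result Corollary \ref{cor:LinPDEunique} together with McKean--Vlasov uniqueness to rule out any other solution. Existence is immediate: Proposition \ref{prop:well_posedness} gives a unique strong solution $(\bar X,\bar Y)$ of \eqref{eq:BasicMcKeanSDE}, and Lemma \ref{lem:SolPDE} shows that $(g,\bar Y)$ with $g_t=\mathrm{Law}(\bar X_t\mid \mathcal F_t^Y)$ is an analytically weak solution of \eqref{eq:PDE_SDE}. Since $g_0\in L^2(\mathbb R^d)$, Lemma \ref{lem:UniBoundsLinear} applied to the linear PDE obtained by freezing the coefficients with $(g,\bar Y)$ itself yields $g_t\in L^2(\mathbb R^d)$ almost surely for every $t\in[0,T]$.

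For uniqueness I would argue as follows. Let $(g^1,\bar Y^1)$ and $(g^2,\bar Y^2)$ be two solutions of \eqref{eq:PDE_SDE} on the given filtered probability space. On a (possibly enlarged) space carrying an independent $m$-dimensional Brownian motion $B^X$ and an $\mathcal F_0$-measurable random variable $\bar X_0\sim g_0$ independent of $(B^Y,Y_0,B^X)$, solve for each $i=1,2$ the (now classical, non-McKean) SDE
\begin{equation*}
d\bar X^i_t = b(t,\bar X^i_t,\bar Y^i_t,g^i_t)\,dt + \sigma_X(t,\bar X^i_t,\bar Y^i_t,g^i_t)\,dB^X_t,\qquad \bar X^i_0=\bar X_0,
\end{equation*}
which admits a unique strong solution since the frozen coefficients are Lipschitz in $x$ by Assumption \ref{ass:Lipschitz}. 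Applying It\^o's formula to $\varphi(\bar X^i_t)$ for $\varphi\in C_b^2(\mathbb R^d)$ and conditioning on $\mathcal F_t^Y$ via Lemma \ref{lem:Cond_Fubini} (the $dB^X$-integral vanishes because $B^X$ is independent of $\mathcal F_t^Y$), one verifies that $\tilde g^i_t:=\mathrm{Law}(\bar X^i_t\mid \mathcal F_t^Y)$ is an $\mathcal M_+(\mathbb R^d)$-valued solution of the linear PDE \eqref{eq:FrozenPDE} with coefficients frozen by $(g^i,\bar Y^i)$ and initial datum $g_0$. The solution $g^i$ of \eqref{eq:PDE_SDE} is, by definition, also an $\mathcal M_+$-valued solution of the same linear PDE. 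Since $g_0\in L^2(\mathbb R^d)$ and is non-negative (as a probability density), Corollary \ref{cor:LinPDEunique} forces $\tilde g^i_t=g^i_t$ almost surely for all $t$.

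Consequently $(\bar X^i,\bar Y^i)$ satisfies the conditional McKean--Vlasov system \eqref{eq:BasicMcKeanSDE} for $i=1,2$, both driven by the same $(B^X,B^Y)$ and starting from the same $(\bar X_0,Y_0)$. Proposition \ref{prop:well_posedness} then yields $(\bar X^1,\bar Y^1)=(\bar X^2,\bar Y^2)$ $\mathbb P$-a.s., whence $g^1=g^2$ and $\bar Y^1=\bar Y^2$ as required.

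I expect the main obstacle to be the rigorous derivation of the linear PDE satisfied by $\tilde g^i$: one must carefully verify $\mathcal F_t^Y$-progressive measurability of $g^i$ and $\bar Y^i$ (built into Definition \ref{def:sol}) so that Lemma \ref{lem:Cond_Fubini} applies, and check that the conditional expectations of the drift and diffusion terms indeed collapse onto integration against $\tilde g^i_s$. A secondary technical point is that Corollary \ref{cor:LinPDEunique} is applied to $g^i$ and $\tilde g^i$ \emph{simultaneously} as solutions of the \emph{same} linear PDE (with coefficients depending only on $(g^i,\bar Y^i)$, not on the unknown); this is what allows the linear-PDE uniqueness to be leveraged without circularity, before McKean--Vlasov uniqueness is invoked at the end.
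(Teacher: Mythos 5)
Your proposal is correct and follows essentially the same route as the paper: existence via Proposition \ref{prop:well_posedness} and Lemma \ref{lem:SolPDE}, then uniqueness by freezing the coefficients along a putative solution, constructing the auxiliary SDE for $\hat X$, invoking Corollary \ref{cor:LinPDEunique} to identify its conditional law with the putative $g$, and closing with the uniqueness of the conditional McKean--Vlasov system. The only cosmetic difference is that you compare two arbitrary solutions to each other whereas the paper compares an arbitrary solution to the canonical one built from \eqref{eq:BasicMcKeanSDE}; the argument is the same.
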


\begin{proof}
    From the results of Lemma \ref{lem:SolPDE} we know that for the unique solution $(\bar{X},\bar{Y})$ of \eqref{eq:BasicMcKeanSDE}, letting $g_t:=Law(\bar{X}_t | \mathcal{F}_{t}^Y)$, $(g,\bar{Y})$, is a solution of \eqref{eq:PDE_SDE}. Let us therefore consider another solution of \eqref{eq:PDE_SDE}, $(\hat{g},\hat{Y})$. Define the stochastic process $\hat{X}$ as the unique solution to
    \begin{equation}
        \label{eq:McKeanFrozen}
        d\hat{X}_t = b(t,\hat{X}_t,\hat{Y}_t,\hat{g}_t)dt + \sigma_X(t,\hat{X}_t,\hat{Y}_t,\hat{g}_t) dB^X_t,
    \end{equation}
    and let $\tilde{g}_t := Law(\hat{X}_t|\mathcal{F}_t^Y)$. Then by an application of It\^{o}'s formula and Lemma \ref{lem:Cond_Fubini}, as in the proof of Lemma \ref{lem:SolPDE}, $\tilde{g}$ is a solution of
    \begin{equation}
        \label{eq:FrozeninProof}
        \begin{aligned}
        \inner{\varphi}{U(t)} = & \inner{\varphi}{U(0)} +  \int_0^t \inner{\nabla_x \varphi(x) \cdot b(s,x,\hat{Y}_s,\hat{g}_s) }{U(s)} ds \\
        & + \int_0^t \inner{ \frac{1}{2} \sum_{i,j = 1}^d a_{ij}(s,x,\hat{Y}_s,\hat{g}_s)  \partial_{x_i} \partial_{x_j} \varphi(x) }{U(s)} ds .
        \end{aligned}
    \end{equation}
    Moreover, by the definition of $(\hat{g},\hat{Y})$ as a solution of \eqref{eq:PDE_SDE}, $\hat{g}$ is also a solution of \eqref{eq:FrozeninProof}. We have however shown in Corollary \ref{cor:LinPDEunique} that \eqref{eq:FrozeninProof} has a unique solution, so $\hat{g} = \tilde{g}$ a.s. Substituting $Law(\hat{X}_t|\mathcal{F}_t^Y)=\tilde{g}_t$ for $\hat{g}$ in \eqref{eq:McKeanFrozen} we see that
    \begin{equation*}
        d\hat{X}_t = b(t,\hat{X}_t,\hat{Y}_t,Law(\hat{X}_t|\mathcal{F}_t^Y))dt + \sigma_X(t,\hat{X}_t,\hat{Y}_t,Law(\hat{X}_t|\mathcal{F}_t^Y)) dB^X_t,
    \end{equation*}
    i.e. $(\hat{X},\hat{Y})$ is a solution of \eqref{eq:BasicMcKeanSDE}. By Proposition \ref{prop:well_posedness} this solution is unique and therefore $(\hat{g},\hat{Y}) = (g,\bar{Y})$ a.s. 
\end{proof}

Since the cost functional $J$ \eqref{eq:McKeanCost} is only evaluated in terms of the state of the leader and the conditional law of the followers $(\bar{Y},g)$, by the results of Theorem \ref{thm:PDE_McKean_Equiv} we can compute these quantities and solve the mean-field control problem using the PDE/SDE system \eqref{eq:PDE_SDE} instead of the conditional McKean-Vlasov system \eqref{eq:BasicMcKeanSDE}. 
As we aim to solve these optimal control problems numerically, we discretise the spatial component of the PDE to obtain a finite system of ODEs coupled with the leader's SDE. For a uniform spatial grid $(x_i)_{i=1}^n$ on a fixed, finite domain $D$, the spatially discretised PDE/SDE system has the general form 
\begin{equation}
    \label{eq:Disc_PDE}   
    \begin{aligned}
            \Dot{g}^n(t) &= \mathcal{D}(t,g^n(t),Y_t) ,  \\
            dY_t & = (c+u)(t,Y_t,g^n(t)) dt  + \sigma_Y(t,Y_t,g^n(t))  dB^Y_{t} ,
    \end{aligned}
\end{equation}
where $g^n(t) = (g_i(t))_{i=1}^n$ is an approximation of $g_t$ at the grid-points $(x_i)_{i=1}^n$, and $\mathcal{D}$ depends on the specific discretisation scheme used. As \eqref{eq:Disc_PDE} is a finite system of ODEs, corresponding to the discretised PDE, coupled to an SDE on which the control acts, from the discussion in Remark \ref{rem:NoInvert} we can directly apply the results of Corollary \ref{Cor:LieAdapt} to solve optimal control problems constrained by these dynamics. Therefore, by first discretising the PDE we obtain a finite system for which we can apply a gradient descent scheme to minimise over a set of controls spanned by a finite basis.

\begin{remark}\label{rem:Girsanov4Vlasov}
 The derivatives of the mean-field cost functional can also be derived directly from the conditional McKean–Vlasov formulation by means of the Cameron–Martin–Girsanov theorem. The key observation is that the conditional law of the follower process given the leader filtration is preserved under the induced change of measure. This directly follows from Bayes’ formula, since the corresponding density process is measurable with respect to the leader filtration; alternatively, it can be established using uniqueness of the associated non-linear Fokker–Planck equation for the time evolution of the conditional law. As a consequence, the derivative formulas follow by the same arguments as in the classical SDE setting underlying Corollary 5.3. We do not pursue this alternative derivation here, for a complete treatment would require substantial additional analysis, and the coupled PDE/SDE formulation is more consistent with the overall approach in this article.
\end{remark}

\subsection{Choice of permissible controls}

For the simulations in Section \ref{sec:Application}, the admissible controls used to approximate the optimal control are chosen to be piecewise constant in time. This choice is mathematically justified by the result in \cite{Jakobsen2019}, which shows that optimal piecewise constant processes converge to the optimal control over all admissible controls, as the interval on which the processes are constant decreases to zero. We can therefore obtain better approximations by refining the time discretization, although this becomes computationally challenging as the time intervals become too small. The choice of piecewise constant controls also has a natural real-world interpretation in situations when a leader updates their strategy only at discrete time points, for example through periodic polling. However, applying this framework to real-world polling data would require addressing additional challenges, including noisy observations, partial information and irregular sampling times, which is beyond the scope of the present work.

To be consistent with the assumptions in the previous section, we regularise the piecewise constant in time controls, such that they are Lipschitz continuous. In particular, if the time-points in which the controls are adapted are $0 = t_1 < t_2 < \dots < t_m = T$, then the basis functions of the admissible control set, $e_k(t)$ for $k = 1, \dots ,m-1$, are given by 
\begin{align}
    \label{eq:FinColl}
    e_k(t) = 
    \begin{cases}
    (t - t_k)/c ,& \text{if } t \in [t_k, t_k + c) \\
    1 ,& \text{if } t \in [t_k + c, t_{k+1} -c ) \\
    (t_{k+1} -t)/c ,& \text{if } t \in [t_{k+1} -c , t_{k+1}) \\    
    0,              & \text{otherwise,}
\end{cases}
\end{align}
for some small constant $c=0.005$. The Newton's method for computing the optimal combination of these basis functions is outlined in Algorithm \ref{alg:one}.

\begin{algorithm}[tbh]
\caption{Newton's method iteration}\label{alg:one}
\KwData{initial guess for the optimal control $a^0$; initial system state $(X',Y')$; final time $T'$; tolerance $\epsilon$}
\KwResult{$(u,a^j)$ approximate solution to the optimal control problem \eqref{eq:ResOC} after $j$ iterations}
$j=0$\

\While{not converged}{
    $j = j +1$\;
    $\nabla_{a^{j-1}} J^N(a^{j-1} ) \gets$ \eqref{eq:FCDER} for system \eqref{eq:UncSys} with initial condition $(X',Y')$ for $T'$\;
    $H(a^{j-1}) \gets$ \eqref{eq:SCDER} \;
    $a^j \gets a^{j-1} - H(a^{j-1})^{-1} \nabla_{a^{j-1}} J^N(a^{j-1} )$\;
    $u \gets$ \eqref{eq:ResOC} for $u^{a^j}$ with \eqref{eq:FinColl}\;
    if $\norm{J^N(a^j) - J^N(a^{j-1})} < \epsilon$ then converged = True; 
}

\end{algorithm}

When computing the optimal control we first find the optimal strategy over the whole time horizon starting at the given initial conditions and then apply it until the time $t_2$, when the new constant strategy is applied. At $t_2$ we take note of the system state and find the optimal control over the remaining time horizon with the system state at $t_2$ as the new initial condition. This procedure is then repeated until the final time. In this way we guarantee that the optimal control computed is adapted to the noise at the time-points $t_i$, $i = 1, \dots, m$, when the control changes. This type of control is referred to as a discrete-time Markov control policy. We outline this process in Algorithm \ref{alg:two}. This algorithm can be used to compute optimal strategies for both the finite particle system and the discretized PDE/SDE mean-field system. Although its computational efficiency depends on the specific form of the interaction, we observe that, for the numerical results presented in the next section, the mean-field control is computationally more efficient than the particle system with $N=99$ followers.

\begin{algorithm}[tbh]
\caption{Discrete-time Markov control}\label{alg:two}
\KwData{initial guess for the optimal control $a_n^0$; initial system state $(X_0,Y_0)$; final time $T$; partition of the time interval $(I_k)_{k=1}^{n+1}$}
\KwResult{Approximate solution to the optimal control problem as a discrete-time Markov control}
$(X',Y') \gets (X_0,Y_0)$\;
\For{$k =  1 ,\dots , n-1 $}{
    $(u,a_k^*) \gets$ Algorithm \ref{alg:one} with initial guess $a_k^0$, $(X',Y')$ and final time $T-t_k$\;
    $(X',Y') \gets$ solution of \eqref{eq:UncSys} at $t=t_{k+1}$ for $Z_{t_k} = (X',Y')$ with control $u$\;
   $a_{k-1}^0 \gets a_k^*$ without its first entry\; 
}
\end{algorithm}

Our choice of finite basis is of course only one option to approximate the optimal control. Another popular choice is to use neural networks as approximating functions. For example, neural networks have been used to solve stochastic optimal control problems for, importance sampling \cite{RiberaBorrell2023}, stochastic reaction diffusion equations \cite{Stannat2023} and standard McKean-Vlasov control \cite{CarmonaLauriere2022} (without leaders).

\section{Application to the noisy Hegselmann-Krause model}
\label{sec:Application}

In this section we demonstrate the application of our optimal control method described in the previous sections to a model of opinion dynamics. To ensure that the theoretical assumptions are satisfied, we regularise the dynamics. Numerical experiments nevertheless show good qualitative behaviour even in situations where these assumptions are violated, suggesting that an extension of the theory may be possible. For a finite agent system, we consider the noisy Hegselmann-Krause model with only one leader \cite{Wang2017}. The classical Hegselmann-Krause model is a prototypical agent-based model describing the opinion evolution of a fixed agent population, where each agent continuously adjusts its opinion toward the average of those within a certain confidence bound given by the interaction radius $R$. The addition of noise accounts for random fluctuations in the process of opinion formation. The dynamics of the followers and the leader are given by
\begin{equation}
    \label{eq:NoisyHK}
    \begin{aligned}
    dX^{i,N}_t & = b(t,X_t^{i,N},Y_t^N,\mu_{X^N_t}) dt  + \sigma dB_{t}^i , \quad i = 1, \dots , N   \\
    dY^N_t & = u(t,Y_t^N,\mu_{X^N_t}) dt + \sigma dB_{t}^Y ,        
    \end{aligned}
\end{equation}
where the interaction between the followers and leader is specified by
\begin{equation*}
    \begin{aligned}
        b(t,x,y,\mu) = k \int_{\mathbb{T}} a \left(x' - x  \right)  \mu(dx') + k_L a \left(y - x \right) 
    \end{aligned}
\end{equation*}
for $k, k_L >0$. For the standard Hegselman-Krause model, each follower updates their opinion based on the opinions of their peers in the neighbourhood determined by the interaction radius $R$. In the numerical results that follow, the jump-discontinuities are regularised, as in \cite{Bicego2025}, by shifting the base by a small constant $c_r= 1/64$ such that 
\begin{equation*}
   a(r) = 
   \begin{cases}
        r,& \text{if } |r| \leq R \\
        r \, \left(R + c_r - |r| \right) / c_r ,& \text{if } R< |r| < R + c_r \\
        0,       & \text{otherwise} .
    \end{cases} 
\end{equation*}
The leader's opinion is independent of followers opinions and changes only in order to influence the followers by adapting the control $u$. The diffusion coefficient $\sigma$ is assumed to be a positive constant, representing the stochastic fluctuations in the opinion dynamics that here (in order to reduce the number of parameters) we assume to be the same for all followers and the leader.

In the left panel of Fig.~\ref{fig:HKsim} we show a simulation of the Hegselmann-Krause model in the time interval $t \in [0,1]$ for $N=99$ followers (dark-blue lines) without a leader present. The histograms of the follower's opinions at the initial (red) and final (black) times are presented in the right panel. We set the system parameters to $k= 10$, $\sigma = 0.05$ and $R=0.15$. For the state space we consider the flat unit torus $\mathbb{T}$. A comparison of the model's qualitative behaviour under different boundary conditions, including periodic and no-flux conditions, is provided in \cite{Goddard2022}. The initial data is sampled from the sum of two von Mises distributions on the unit torus, $f(x|0.65,4) + f(x|0.25,8)$, where $f(x|\mu, \kappa) := Z_\kappa^{-1} \exp\{\kappa \cos \left( x -  2 \pi \mu  \right)\}$ and $Z_\kappa = \int_\mathbb{T} \exp\{\kappa \cos \left( x \right)\} dx$. The choice of this initial condition corresponds to two clusters of the population. Unless otherwise stated we will use the same parameters and initial condition for all the simulations presented in this section. From this simulation we see that the two clusters become more distinct as the system evolves.

Generally, a desirable outcome is for the entire population to form a consensus, i.e., for all agents to have a similar opinion. It is well known that, for a sufficiently small diffusion coefficient (or alternatively high interaction strength) the Hegselmann-Krause model will form several clusters which will eventually merge and form a consensus (see the discussions in \cite{Wang2017,Garnier2016,Gerber2026}). However, we are interested in reaching a consensus in a short, finite time horizon $T$, rather than asymptotically. To formalize this goal, we define the cost as
\begin{equation*}
    \begin{aligned}
        J^N(u) := \mathbb{E} \left[ \int_0^T \int_{\mathbb{T}} |Y_t^N - x'|^2 \mu_{X^N_t}(dx') + \frac{\lambda}{2}  |u(t,Y_t^N,\mu_{X^N_t})|^2 dt  \right ],
    \end{aligned}
\end{equation*}
where the leader aims to minimise the distance to all followers, thereby bringing the system to a consensus state. 

\begin{figure}[tbh]
    \centering
    \includegraphics[width=0.75\linewidth]{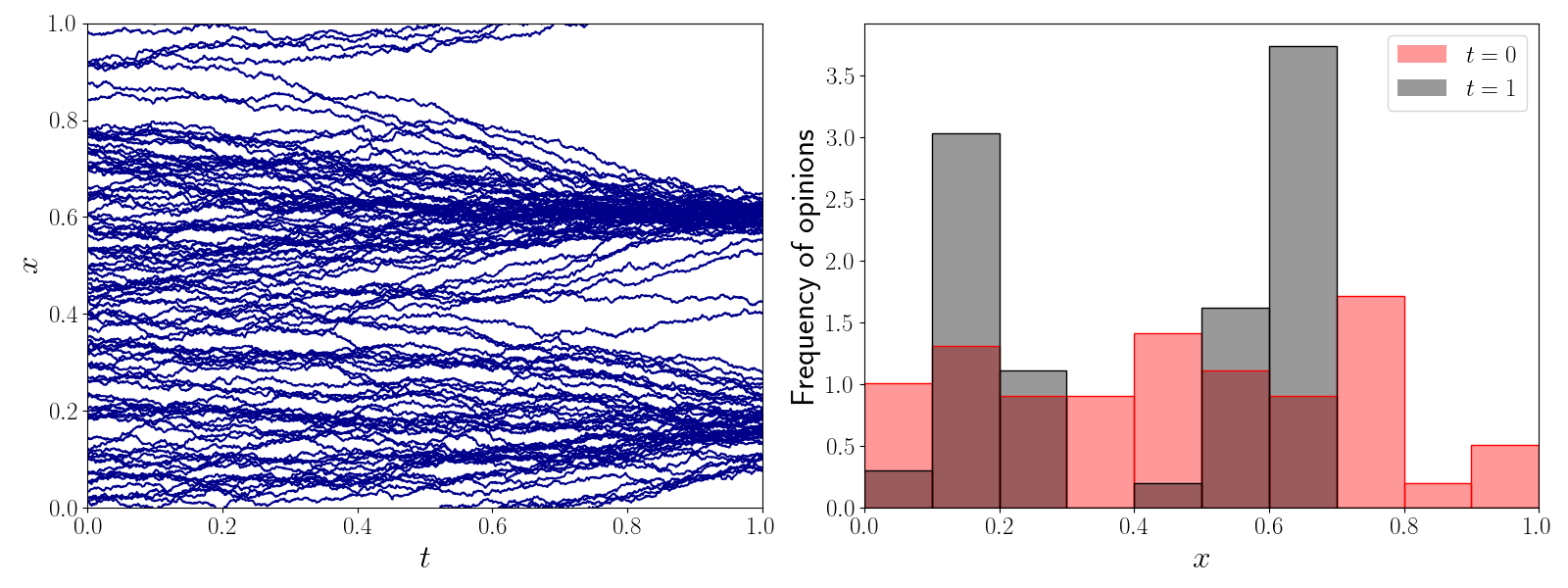}
    \caption{Left: Simulation of the noisy Hegselmann-Krause model for $N=99$ followers (dark-blue lines) without a leader present. Right: Histograms of the follower's opinions at $t=0$ (red) and $t=1$ (black).}
    \label{fig:HKsim}
\end{figure}

We implement Algorithm \ref{alg:one} to compute the optimal offline control for $\lambda = 0.01$ and five evenly spaced intervals over the total time interval $t \in [0,1]$ ($m=6$). The initial position of the leader is set to $Y_0 = 0.8$ and $k_L = 5$. A total of $M=10^4$ realisations were used in a Monte-Carlo scheme to compute the expectations (we found that further increasing the number of Monte-Carlo simulations had no significant effect on the results). 

The overall accuracy of the numerical approximation of the particle  system is affected by time discretisation and Monte Carlo  errors of the cost functional, its gradient, and its Hessian which, for the situation at hand, shows the usual $\mathcal{O}(\Delta t + M^{-1/2})$ decay; cf.~\cite{bossy1996convergence}. Note that the Euler-Maruyama method for our SDE with additive noise has both weak and strong order of convergence 1.  Assuming that the regression error due to the approximation of the optimal control by piecewise constant controls is sufficiently small, Newton's methods enjoys the usual quadratic convergence behaviour \cite{gobet2022newton}; the same goes for the positivity-preserving finite difference scheme for the PDE/SDE system mentioned below. We refrain from going into further details here and leave the numerical analysis for future work.

In the left panel of Fig.~\ref{fig:m5_iterations} the successive updates of the control vector $u^i$ are shown for $200$ total iterations of the gradient descent based algorithm. Since the control is a vector we show all five components at each iteration. The total cost of the successive controls $J^N(u^i)$, is presented in the right panel of Fig.~\ref{fig:m5_iterations}. We can see that the first two components of the control undergo the greatest change in the first $150$ iterations after which further iterations do not have a large impact. For the cost, we see a similar behaviour as there is initially a great decrease in the cost, which levels out as we approach the optimal control. 

\begin{figure}[tbh]
    \centering
    \includegraphics[width=0.75\linewidth]{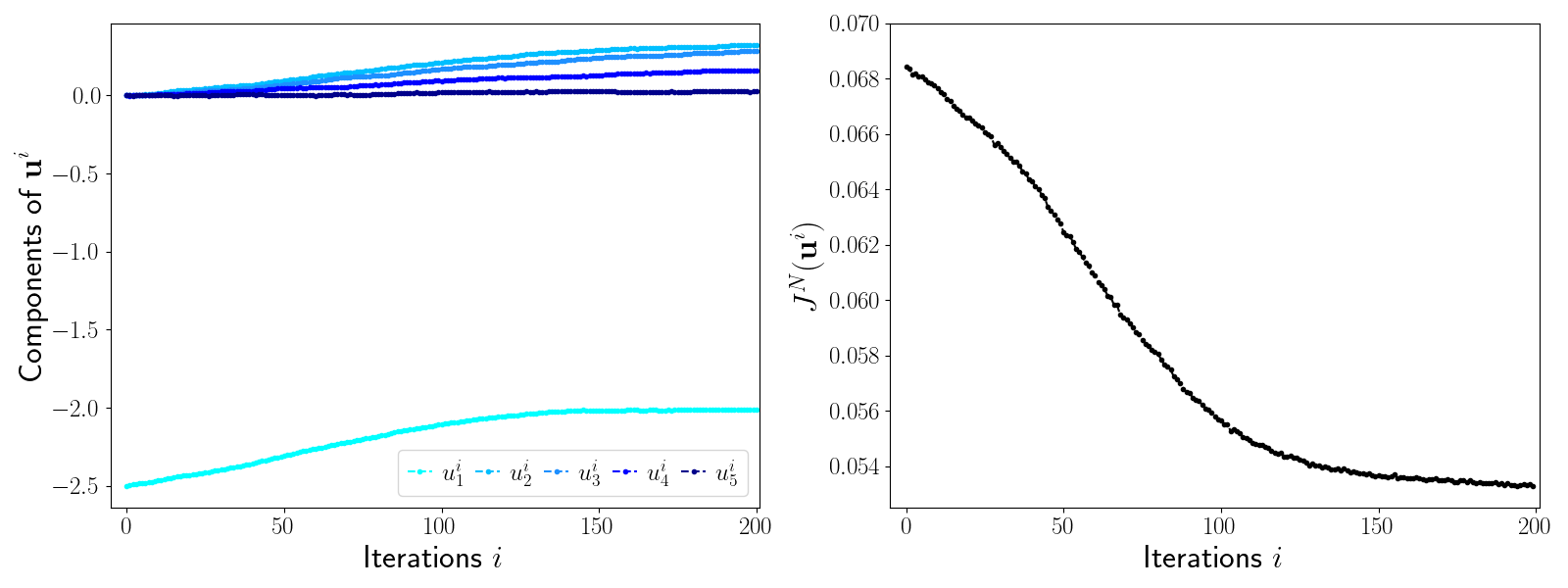}
    \caption{Results obtained by implementing the gradient descent based on Algorithm \ref{alg:one} for the noisy Hegselmann-Krause model with one leader and $99$ followers. The optimal control is approximated by functions which are piecewise constant over $m=5$ evenly space intervals \eqref{eq:FinColl} on the time interval $t \in [0,1]$. Left: The five components of the control $u^i$ for each iteration $i$ of the minimisation scheme. The subscript $u_k^i$ denotes the $k$-th component of $u^i$. Right: The total cost $J^N(u^i)$ for each control shown in left panel.}
    \label{fig:m5_iterations}
\end{figure}

We compute the dynamics for the optimal discrete-time Markov control using Algorithm \ref{alg:two}. The time-evolution of the optimally controlled system is shown in the left panel of Fig.~\ref{fig:m5_results}. The red line corresponds to the leader's dynamics and the black, dashed, vertical lines indicate when the piecewise constant control changes. Histograms of the follower's initial and final opinions are presented in the right panel of Fig.~\ref{fig:m5_results}. Comparing Figs.~\ref{fig:HKsim} and \ref{fig:m5_results} we can see that the leader succeeds in bringing the majority of followers to a consensus, exemplified by the single peak of the histogram at the final time.

\begin{figure}[tbh]
    \centering
    \includegraphics[width=0.75\linewidth]{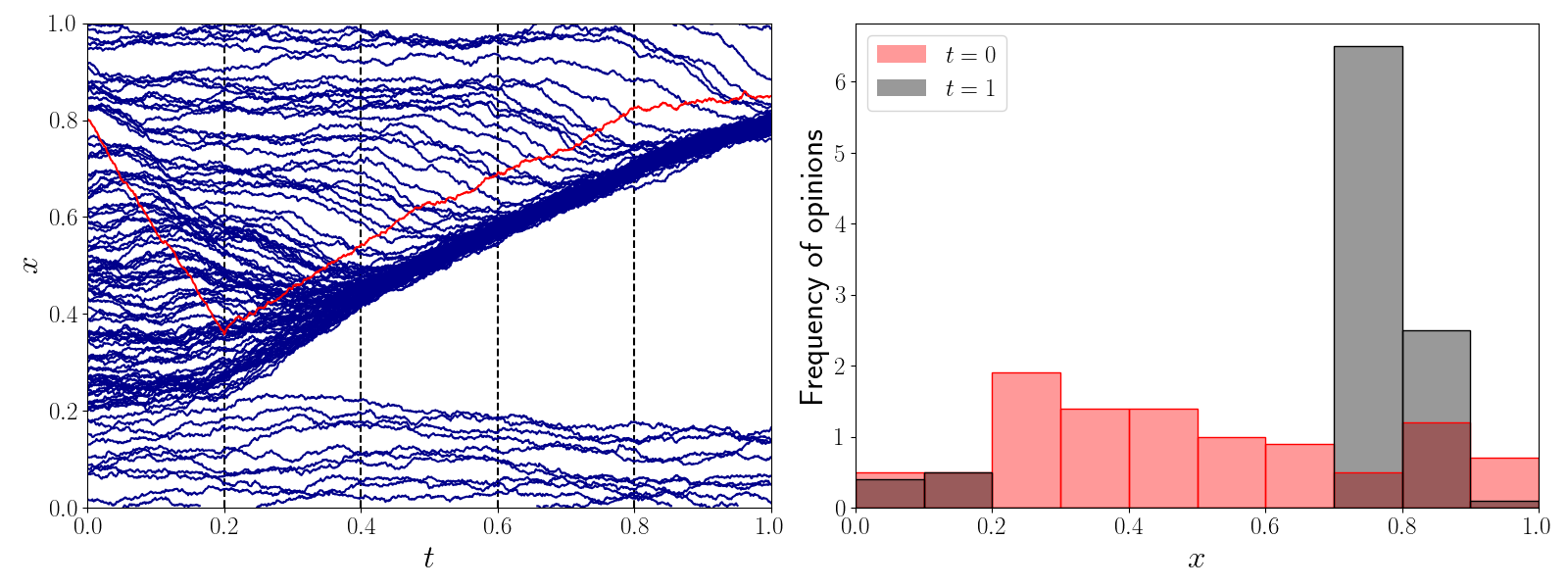}
    \caption{Left: System dynamics under the optimal discrete-time Markov control computed using Algorithm \ref{alg:two}, for $N=99$ followers (dark-blue lines) and one leader (red line). The black, dashed, vertical lines indicate when the piecewise constant control changes. Right: Histograms of the follower's opinions at $t=0$ (red) and $t=1$ (black).} 
    \label{fig:m5_results}
\end{figure}

So far, we have only solved the optimal control problem for relatively weak noise, $\sigma = 0.05$, as it means that we can verify our scheme without worrying about large fluctuations. To demonstrate that our method also works when the fluctuations due to the noise are greater, we show in Fig.~\ref{fig:HK_additional} the optimal dynamics for $\sigma=0.1$ (top-row) and $\sigma=0.2$ (bottom-row). The remaining system parameters are the same as before. By comparing the histograms at $t=1$ presented in Figs.~\ref{fig:m5_results} and \ref{fig:HK_additional}, we can see that as the noise increases the final cluster contains less agents and is more spread out. Nonetheless, the leader is still able to bring the system close to a consensus. 

\begin{figure}[tbh]
    \centering
    \includegraphics[width=0.75\linewidth]{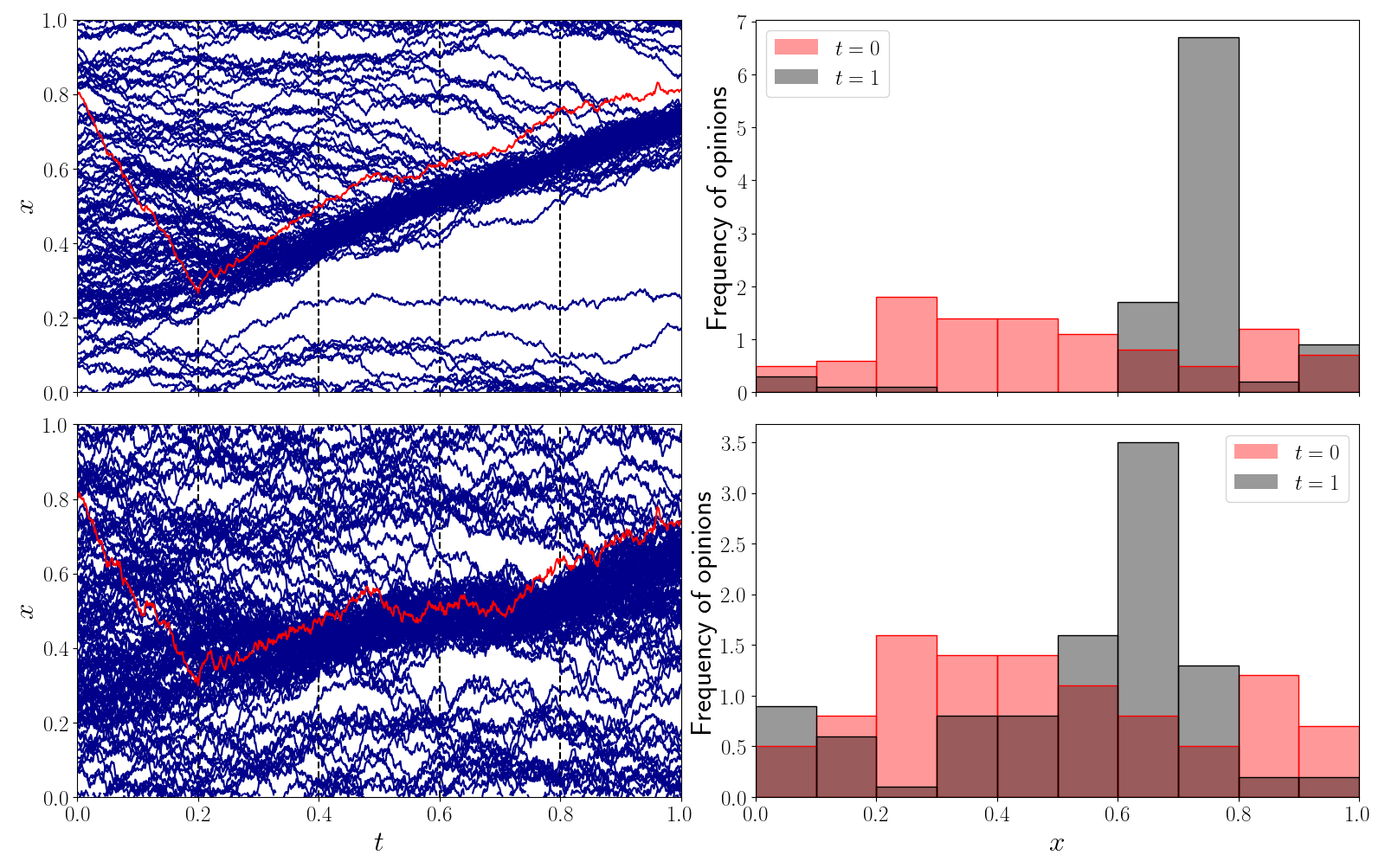}
    \caption{Similar to  Fig.~\ref{fig:m5_results}, but for $\sigma = 0.1$ (top-row) and $\sigma=0.2$ (bottom-row).}
    \label{fig:HK_additional}
\end{figure}

The PDE/SDE system for the mean-field limit of the Hegselmann-Krause model with a leader is 
\begin{equation*}
\begin{aligned}
        \partial_t g_t &= - \nabla_x ( b(t,x, \bar{Y}_t,g_t)g_t) + \frac{\sigma^2}{2} \Delta_x  g_t  ,  \\[5pt]
        d\bar{Y}_t & = u(t,\bar{Y}_t,g_t) dt + \sigma dB^Y_{t} ,
\end{aligned}
\end{equation*}
and the associated cost for the mean-field control problem is
\begin{equation*}
    J(u) := \mathbb{E} \left[ \int_0^T \int_{\mathbb{T}} \norm{\bar{Y}_t - x'}^2 g_t(dx') + \frac{\lambda}{2}  |u(t,\bar{Y}_t,g_t)|^2 dt  \right ] .
\end{equation*}

To solve the discretised PDE/SDE system, that in general is given by \eqref{eq:Disc_PDE}, we use the numerical scheme for non-linear Fokker-Planck equations introduced in \cite{Pareschi2018}. This scheme is second order accurate in space and first order accurate in time (cf.~\cite{duan2021structure}); if the PDE/SDE system is considered an approximation of the $N$-particle system, another $\mathcal{Q}(N^{-1/2})$ error for the approximation of the PDE solution by the empirical measure comes into play \cite{bossy1996convergence}. The finite-difference scheme of \cite{Pareschi2018} preserves non-negativity if the step-sizes of the time $\Delta t$ and spatial $\Delta x$ discretisations are chosen such that
\begin{equation*}
    \Delta t \leq \frac{\Delta x^2}{2\Delta x (k +k_L)R  + \sigma^2} \,.
\end{equation*}
As the state space of particle system in the previous subsection was selected to be the unit torus, the domain of the PDE is $[0,1]$ with periodic boundary conditions. We discretise this spatial domain into $n=64$ evenly spaced points. As discussed at the end of Section \ref{sec:MFLcontrol} the ODEs of the discretised PDE are coupled to the SDE of the leader allowing us to apply the same method as in the finite particle case to solve the optimal control problem. We again restrict ourselves to controls which are piecewise constant in time.

The system parameters are the same as for the results presented in Fig.~\ref{fig:m5_results} and $m=5$ evenly spaced intervals are used for the piecewise constant control. The initial data for the PDE is given by the sum of two von Mises distributions on the unit torus, $g_0(x) = f(x|0.65,4) + f(x|0.25,8)$ and initial position of the leader is $Y_0 = 0.8$ as before. The system dynamics when the optimal discrete-time Markov control is implemented is shown in left panel of Fig.~\ref{fig:CC} with $g_0(x)$ and $g_T(x)$ shown in the right panel. We can see that the leader is able to bring about a consensus, characterised by the single peak of $g_T(x)$. 

\begin{figure}[tbh]
    \centering
    \includegraphics[width=0.75\linewidth]{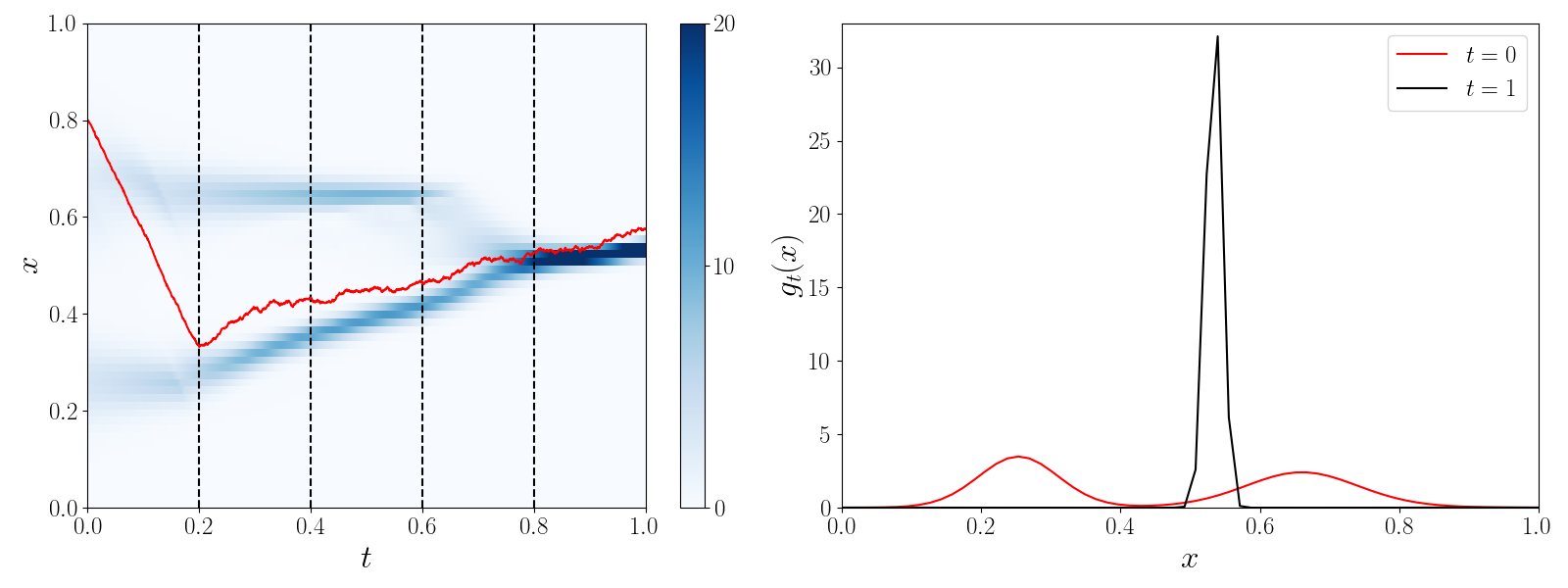}
    \caption{Left: System dynamics of the limiting non-linear Fokker-Planck equation coupled to the SDE of the leader when the optimal discrete-time Markov control is applied. The position of the leader is indicated by the red line and the density of followers $g_t(x)$ coloured according to the heatmap on the right of the plots. Right: The profile of $g_t(x)$ at the initial (red) and final (black) times.}
    \label{fig:CC}
\end{figure}

\section[]{Conclusion and outlook}
\label{sec:Conclusion}

In this work we studied the stochastic optimal control of finite particle (agent) systems, as well as their mean-field limit, through the intervention of a leader. We generalised existing theoretical results on the derivation of the mean-field limit as well as the connection between finite and mean-field optimal controls and developed a numerical method for computing the optimal control in the mean-field limit.

The mean-field limit is a conditional McKean-Vlasov SDE coupled to the SDE of the leader, which we showed in a propagation of chaos proof, for non-constant diffusion coefficients. The conditional law of the solution of the conditional McKean-Vlasov SDE was then proved to be the unique solution of a non-linear Fokker-Planck equation coupled to an SDE. Concluding our analysis we showed that when the state space of the particles is compact and the leader is driven by an independent Brownian motion, the optimal control of the finite system converges to that of the mean-field limit. 

The method we presented to approximate the optimal mean-field control consists of first discretising (in space) the non-linear Fokker-Planck equation to obtain a system of ODEs coupled to an SDE, and then applying a gradient descent-based minimisation scheme to find an optimal, finitely based, control. We utilised this method to find the optimal discrete-time Markov control for a leader to bring the noisy Hegselmann-Krause model to a consensus. While we only used piecewise constant basis functions, we expect that our method will also work for a different choice of finitely based approximation, e.g., neural networks.

There are several different directions for future work. Firstly, we aim to give a convergence analysis of the numerical scheme we have developed that would give more precise conditions on how well the optimal control is approximated using a given basis, spatial discretisation, number of Monte-Carlo simulations and gradient evaluations. Secondly, it is well known that to represent the fluctuations in large (but still finite) particle systems it is necessary to study a Dean-Kawasaki type SPDE and not the Fokker-Planck equation \cite{Helfmann2021}. While  there is little theoretical work on the optimal control of Dean-Kawasaki type SPDEs, it is interesting to study the problem from a numerical perspective by first discretising the SPDE, similar to what we have done for the PDE. In this case the numerical scheme used to guarantee positivity of the numerical solution is of particular interest, see e.g. \cite{Djurdjevac2025}. Lastly, an explicit expression for the Fr\'echet derivatives, with respect to a perturbation of the drift, of the expected value of functionals depending on the solution of conditional McKean-Vlasov SDEs remains open. 

The study of opinion dynamics and control inevitably touches on ethically sensitive questions, particularly regarding influence and manipulation in digital communication spaces. While our work is motivated by mathematical and computational insights, its implications extend to understanding how collective opinions can be shaped by a small number of agents or external drivers. We hope that these findings will contribute to developing transparent frameworks for detecting, analysing, and ultimately mitigating attempts to steer or distort public opinion. By offering a quantitative foundation for such analyses, this research seeks to support responsible governance and ethical use of data-driven social modelling.\\

{\bfseries Funding.} This work has been partially funded by the Deutsche Forschungsgemeinschaft (DFG, German Research Foundation) under Germany´s Excellence Strategy – The Berlin Mathematics Research Center MATH+ (EXC-2046/1, EXC-2046/2, project ID: 390685689). The research of CH has been partially funded by the German Federal Government, the Federal Ministry of Education and Research and the State of Brandenburg within the framework of the joint project EIZ: Energy Innovation Center (project numbers 85056897 and 03SF0693A).

\bibliographystyle{abbrv}
\bibliography{ref}

\section{Appendix}

\subsection{Proofs for Section \ref{sec:MFLderivation}}
\label{sec:Appendix1}

To begin let us recall the following two theorems which give the convergence of the empirical measure of $N$ i.i.d. particles to the common distribution as $N \to \infty$. The first follows from the Glivenko-Cantelli theorem, see \cite[Section 5.1.2]{Carmona2018}, and the second gives explicit rates of convergence.
\begin{theorem}
    \label{thm:GlivenkoCantelli}
    For a given i.i.d. sequence of $\mathbb{R}^d$-valued random variables $(X_n)_{n\in \mathbb{N}}$ with a common distribution $\mu \in \mathcal{P}_2(\mathbb{R}^d)$, $\lim_{N \to \infty} \mathbb{E} \left[ W_2(\mu_{X^N},\mu)^2  \right] = 0$, where $\mu_{X^N} := N^{-1} \sum_{i=1}^N \delta_{X_i}$ is the empirical measure of $(X_n)_{n \leq N }$.
\end{theorem}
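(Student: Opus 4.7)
The plan is to reduce the statement to three classical ingredients: (i) a.s.\ weak convergence of the empirical measure $\mu_{X^N}$ to $\mu$; (ii) a.s.\ convergence of the second moment $M_2(\mu_{X^N}) = N^{-1}\sum_{i=1}^N |X_i|^2$ to $M_2(\mu) = \mathbb{E}[|X_1|^2]$; and (iii) a generalised dominated convergence argument that upgrades a.s.\ convergence of $W_2(\mu_{X^N},\mu)^2$ to convergence in $L^1(\mathbb{P})$.

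For step (i), one applies the strong law of large numbers to $N^{-1}\sum_{i=1}^N \varphi(X_i)$ for each $\varphi$ in a countable convergence-determining family of bounded continuous test functions and takes a union bound over the countable family; this produces a set of full probability on which $\mu_{X^N} \to \mu$ weakly (Varadarajan's theorem). For step (ii), since $\mu \in \mathcal{P}_2(\mathbb{R}^d)$ one has $\mathbb{E}[|X_1|^2] < \infty$, so the classical SLLN applied to the i.i.d.\ sequence $(|X_i|^2)_{i\in\mathbb{N}}$ gives $M_2(\mu_{X^N}) \to M_2(\mu)$ almost surely. A standard characterisation of Wasserstein-$p$ convergence then states that, on $\mathcal{P}_p(\mathbb{R}^d)$, convergence in $W_p$ is equivalent to weak convergence together with convergence of the $p$-th moments; combining (i) and (ii) therefore gives $W_2(\mu_{X^N},\mu) \to 0$ almost surely.

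For step (iii), I would exploit the deterministic a.s.\ bound
\begin{equation*}
    W_2(\mu_{X^N},\mu)^2 \leq 2 W_2(\mu_{X^N},\delta_0)^2 + 2 W_2(\delta_0,\mu)^2 = 2 M_2(\mu_{X^N}) + 2 M_2(\mu) =: Z_N,
\end{equation*}
which follows from the triangle inequality for $W_2$ together with the identity $W_2(\nu,\delta_0)^2 = M_2(\nu)$. We have $\mathbb{E}[Z_N] = 4 M_2(\mu)$ for every $N$, and $Z_N \to 4 M_2(\mu)$ almost surely by step (ii). Pratt's lemma (generalised dominated convergence with a convergent envelope whose integral converges to that of its limit) applied to $0 \leq W_2(\mu_{X^N},\mu)^2 \leq Z_N$ then yields $\mathbb{E}[W_2(\mu_{X^N},\mu)^2] \to 0$, which is the claim.

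The main technical obstacle is step (iii): the natural dominating envelope $Z_N$ is itself random and $N$-dependent, so Lebesgue's dominated convergence theorem does not apply directly. Pratt's lemma (equivalently, a Vitali-type uniform integrability argument) side-steps this by converting the equality $\mathbb{E}[Z_N] = 4 M_2(\mu)$, valid for every $N$, into uniform integrability of $(W_2(\mu_{X^N},\mu)^2)_{N\in\mathbb{N}}$. The remaining ingredients — the weak law for empirical measures and the scalar SLLN — are classical and require nothing beyond the standing hypothesis $\mu \in \mathcal{P}_2(\mathbb{R}^d)$.
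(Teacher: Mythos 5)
Your proof is correct: the paper does not prove this statement itself but cites \cite{Carmona2018}, Section 5.1.2, and your argument (a.s.\ weak convergence of the empirical measures via Varadarajan, a.s.\ convergence of second moments via the SLLN, the equivalence of $W_2$-convergence with weak convergence plus moment convergence, and the upgrade to $L^1(\mathbb{P})$ via Pratt's lemma with the envelope $Z_N = 2M_2(\mu_{X^N}) + 2M_2(\mu)$) is precisely the standard proof given there. No gaps.
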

\begin{theorem}[Theorem $5.8$ \cite{Carmona2018}]
    \label{thm:ExplicitRates}
    Given an i.i.d. sequence of $\mathbb{R}^d$-valued random variables $(X_n)_{n\in \mathbb{N}}$ with a common distribution $\mu \in \mathcal{P}_q(\mathbb{R}^d)$ for $q>4$, then for any $d \geq 1$ there exists a constant $C=C(d,q,M_q(\mu))$ such that for all $N \geq 2$,
    \begin{equation*}
        \mathbb{E} \left[ W_2(\mu_{X^N},\mu)^2  \right] \leq \epsilon_N := C 
        \begin{cases}
            N^{- \frac{1}{2}}, \quad & \text{if  $d< 4$}, \\
            N^{- \frac{1}{2}} \log N, \quad &\text{if  $d= 4$}, \\
            N^{- \frac{2}{d}}, \quad &\text{if  $d> 4$}.
        \end{cases}
    \end{equation*}
\end{theorem}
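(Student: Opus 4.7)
The plan is to establish this Fournier--Guillin-type rate by combining a truncation argument with a dyadic multiscale decomposition of space, controlling the transport cost scale by scale. Throughout, I use the moment assumption $M_q(\mu)<\infty$ with $q>4$ to handle tails, and standard binomial concentration to handle fluctuations of the empirical mass in each cell.

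The first step is a tail truncation. Writing $\mu=\mu|_{B_R}+\mu|_{B_R^c}$ for a ball $B_R$ of radius $R$, the contribution of points outside $B_R$ to $\mathbb{E}[W_2^2(\mu_{X^N},\mu)]$ can be bounded through an explicit sub-optimal coupling by $C\,\mathbb{E}[|X_1|^2\mathbf{1}_{|X_1|>R}]$, which by H\"older's inequality is of order $R^{2-q}M_q(\mu)^{2/q}$. Since $q>4$ this decays sufficiently fast in $R$, so choosing $R$ to grow slowly with $N$ (for instance as a power of $\log N$) absorbs the tail into the final rate.

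On the bounded region $B_R$, I apply a dyadic decomposition: at scale $k=0,1,\dots,K$, partition $B_R$ into cubes $\{Q_j^{(k)}\}$ of side length $\ell_k=2^{-k}R$, with $N_k\sim 2^{kd}$ cubes at scale $k$. The classical partition bound for the squared Wasserstein distance yields
\begin{equation*}
W_2^2(\mu_{X^N}|_{B_R},\mu|_{B_R}) \;\lesssim\; \sum_{k=0}^{K-1} \ell_k^{2}\sum_{j}\bigl|\mu_{X^N}(Q_j^{(k)})-\mu(Q_j^{(k)})\bigr| + \ell_K^{2},
\end{equation*}
since mass can be transported within a single cube at cost $\ell_k$ per unit mass, and excess mass transferred between neighboring cubes travels distance $\lesssim \ell_k$. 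For each fixed cube $Q$, $N\mu_{X^N}(Q)$ is binomial with mean $N\mu(Q)$, so $\mathbb{E}|\mu_{X^N}(Q)-\mu(Q)|\leq\sqrt{\mu(Q)/N}$. Applying Cauchy--Schwarz across the cubes at scale $k$ gives $\sum_j\mathbb{E}|\mu_{X^N}(Q_j^{(k)})-\mu(Q_j^{(k)})|\leq\sqrt{N_k/N}$, hence a per-scale contribution of order $R^2\,2^{k(d/2-2)}/\sqrt{N}$.

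The final step is to sum over scales and optimize the finest scale $K$. The geometric sum in $k$ has common ratio $2^{d/2-2}$: if $d<4$ the series converges and the total is $O(R^2/\sqrt{N})$, independently of $K$; if $d=4$ a logarithmic factor $\log N$ is picked up by choosing $K\sim\log_2 N$; if $d>4$ the sum is dominated by $k=K$ and must be balanced against the residual $\ell_K^{2}=R^2 2^{-2K}$, yielding $O(N^{-2/d})$ after optimization. Combining these three regimes with the truncation bound produces the stated cases. The main technical obstacle is establishing the partition-based upper bound on $W_2^2$ with sharp constants, which requires constructing an explicit transport plan that first equalizes marginals on each cube and then transfers the remaining mass between adjacent cubes at the relevant scale, and carefully trading off the truncation radius $R$ against the multiscale sum so that the tail contribution does not dominate the claimed rate.
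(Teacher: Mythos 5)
The paper does not actually prove this statement: it is imported verbatim as Theorem 5.8 of \cite{Carmona2018}, which is in turn the Fournier--Guillin empirical-measure rate, so your attempt can only be compared with the standard proof of that result. Your multiscale skeleton for the compactly supported part is the right one and produces the correct exponents: the partition bound $W_2^2\lesssim \sum_k \ell_k^2\sum_j|\mu_{X^N}(Q_j^{(k)})-\mu(Q_j^{(k)})|+\ell_K^2$, the binomial variance estimate followed by Cauchy--Schwarz giving $\sqrt{N_k/N}$ per scale, and the three regimes of the geometric ratio $2^{d/2-2}$ are all exactly as in Fournier--Guillin.

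The genuine gap is the tail. A single truncation radius $R$ cannot deliver the stated rate: your bulk term scales as $R^2N^{-1/2}$ (for $d<4$) and your tail term as $M_q\,R^{2-q}$, and optimizing over $R$ forces $R\sim N^{1/(2q)}$ and a total of order $N^{1/q-1/2}$, which is strictly worse than $N^{-1/2}$ for every finite $q$; the choice $R=(\log N)^a$ that you propose is worse still, since then $R^{2-q}$ decays only polylogarithmically and dominates the claimed rate outright. The correct argument replaces the single ball by a decomposition of $\mathbb{R}^d$ into dyadic annuli $B_{2^{n+1}}\setminus B_{2^n}$ and runs the multiscale estimate on each annulus separately: the key point is that the fluctuation of the empirical mass of a cube lying in the $n$-th annulus is controlled by $\sqrt{\mu(B_{2^n}^c)/N}\le 2^{-nq/2}M_q(\mu)^{1/2}N^{-1/2}$ rather than by $\sqrt{\mu(Q)/N}$ with total mass $1$, so the $n$-th annulus contributes at most $2^{2n}\cdot 2^{-nq/2}$ times the flat-space rate, and $\sum_n 2^{n(2-q/2)}<\infty$ precisely when $q>4$. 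That is where the hypothesis $q>4$ actually enters; in your write-up it is never used in a load-bearing way (any $q>2$ would make $R^{2-q}$ ``decay sufficiently fast''), which is the telltale sign that the tail handling as proposed would not close.
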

To prove Theorem \ref{thm:PropChaos} we will use the following probabilistic setup from \cite[Section 2.1.3]{Carmona2018a} which will let us separate the effects of the noise on the leader and followers. We begin with two complete probability spaces $(\Omega^0, \mathcal{F}^0,\mathbb{P}^0)$ and $(\Omega^1, \mathcal{F}^1,\mathbb{P}^1)$, respectively endowed with the right-continuous and complete filtrations $\mathbb{F}^0 = (\mathcal{F}^0_t)_{t\geq 0}$ and $\mathbb{F}^1 = (\mathcal{F}^1_t)_{t\geq 0}$. We assume that the noise acting on the leader $B^Y$ and $Y_0$ are constructed on the space $(\Omega^0, \mathcal{F}^0,\mathbb{P}^0)$, while the randomness affecting the followers $(B^{i})_{i \in \mathbb{N}}$ and $(X_0^i)_{i \in \mathbb{N}}$ are constructed on $(\Omega^1, \mathcal{F}^1,\mathbb{P}^1)$. The product probability space is defined as $(\Omega,\mathcal{F},\mathbb{F},\mathbb{P})$, where $\Omega = \Omega^0 \times \Omega^1$, $(\mathcal{F},\mathbb{P})$ is the completion of $(\mathcal{F}^0 \otimes \mathcal{F}^1,\mathbb{P}^0 \otimes \mathbb{P}^1)$ and $\mathbb{F} = (\mathcal{F}_t)_{t\geq0}$ is the complete and right-continuous augmentation of $( \mathcal{F}_t^0 \otimes \mathcal{F}_t^1 )_{t\geq 0}$. Generic elements of $\Omega$ are denoted by $w = (w^0,w^1)$ with $w^0 \in \Omega^0$ and $w^1 \in \Omega^1$. 

Given this setup we then have the following results. For a random variable $X$ on $\Omega$ the mapping $\mathcal{L}^1(X) : \Omega^0 \ni w^0 \mapsto Law(X(w^0,\cdot))$ is a.s. well-defined under $\mathbb{P}^0$ and provides a conditional law of $X$ given $\mathcal{F}^0$ \cite[Lemma 2.4]{Carmona2018a}. Moreover, from \cite[Lemma 2.5]{Carmona2018a} we know that for an $\mathbb{R}^d$-valued process $(X_t)_{t\geq0}$ which is adapted to $\mathbb{F}$, the $\mathcal{P}(\mathbb{R}^d)$-valued process $(\mathcal{L}^1(X_t))_{t\geq0}$ is adapted to $\mathbb{F}^0$. If $(X_t)_{t\geq0}$ has continuous paths and for all $T\geq 0$, $\mathbb{E} [\sup_{t \leq T} |X_t|^2  ] < \infty$, then we can find a version of $\mathcal{L}^1(X_t)$ such that $(\mathcal{L}^1(X_t))_{t\geq0}$ is $\mathbb{F}^0$-adapted and has continuous paths. For the solution $(\bar{X}^N,\bar{Y} )$ of the conditional McKean-Vlasov SDE \eqref{eq:McKeanSDE} by \cite[Proposition 2.9]{Carmona2018a}, $\mathcal{L}^1(\bar{X}_t^{i,N})$ is a version of the conditional law of $\bar{X}_t^{i,N}$ given $(B^Y,Y_0)$, i.e. $Law(\bar{X}_t^{i,N}| \mathcal{F}_t^Y)$. We are now ready to prove the propagation of chaos result.

\begin{proof}[Proof of Theorem \ref{thm:PropChaos}]
Working under the probabilistic setup defined above we will use $\mathcal{L}^1(\bar{X}_t^{1,N})$ instead of $g_t$ in this proof. We will let $C$ be a generic constant, independent of $N$, which may change from line to line. By the definition of the strong solution, Jensen's inequality, and the Burkholder-Davis-Gundy inequality, we have
\begin{align*}
     \mathbb{E} \left[ \sup_{t \leq T} |X_t^{1,N} - \bar{X}^{1,N}_t |^2 \right]  \leq & C \mathbb{E} \left[ \int_0^T (b(t,X^{1,N}_t,Y_t^N,\mu_{X_t^N}) - b(t,\bar{X}^{1,N}_t,\bar{Y}_t,\mathcal{L}^1(\bar{X}_t^{1,N})))^2 dt \right]\\
     + &  C \mathbb{E} \left[ \int_0^T  (\sigma_X(t,X^{1,N}_t,Y_t^N,\mu_{X_t^N}) - \sigma_X(t,\bar{X}^{1,N}_t,\bar{Y}_t,\mathcal{L}^1(\bar{X}_t^{1,N})))^2  dt \right] .
\end{align*}
Let us only consider the first term in the integral, as the second can be treated in the exact same manner, due to the regularity assumptions on $\sigma_X$. By the triangle inequality
\begin{align*}
    &  \mathbb{E} \left[ \int_0^T (b(t,X^{1,N}_t,Y_t^N,\mu_{X_t^N}) - b(t,\bar{X}^{1,N}_t,\bar{Y}_t,\mathcal{L}^1(\bar{X}_t^{1,N})))^2  dt \right] \\
    & \leq C  \mathbb{E} \left[ \int_0^T (b(t,X^{1,N}_t,Y_t^N,\mu_{X_t^N}) - b(t,\bar{X}^{1,N}_t,\bar{Y}_t,\mu_{\bar{X}_t^N}))^2 dt \right] \\
    & + C \mathbb{E} \left[ \int_0^T (b(t,\bar{X}^{1,N}_t,\bar{Y}_t, \mu_{\bar{X}_t^N}) - b(t,\bar{X}^{1,N}_t,\bar{Y}_t,\mathcal{L}^1(\bar{X}_t^{1,N})))^2 dt \right] .
\end{align*}

From the definition of $W_2$ and since the pairs $(X^{i,N},\bar{X}^{i,N})_{1 \leq i\leq N}$ are identically distributed, we have
\begin{equation}
    \label{eq:Wass_to_part}
    \mathbb{E} \left[W_2(\mu_{X_t^N}, \mu_{\bar{X}_t^N})^2 \right] \leq \mathbb{E} \left[ \frac{1}{N} \sum_{i=1}^N |X_t^{i,N} - \bar{X}^{i,N}_t |^2 \right]  =\mathbb{E} \left[ |X_t^{1,N} - \bar{X}^{1,N}_t |^2 \right] .
\end{equation}
Due to Fubini's theorem, the global Lipschitz assumptions on the coefficient $b$ (see Assumption \ref{ass:Lipschitz}) and \eqref{eq:Wass_to_part} we therefore obtain
\begin{equation*}
    \begin{aligned}
     & \mathbb{E} \left[ \int_0^T (b(t,X^{1,N}_t,Y_t^N,\mu_{X_t^N}) - b(t,\bar{X}^{1,N}_t,\bar{Y}_t,\mathcal{L}^1(\bar{X}_t^{1,N})))^2  dt \right]  \\
     & \leq  C \int_0^T \mathbb{E} \left[ |X_t^{1,N} - \bar{X}^{1,N}_t |^2 + |Y_t^N - \bar{Y}_t|^2 + W_2(\mu_{\bar{X}_t^N},\mathcal{L}^1(\bar{X}_t^{1,N}))^2  \right] dt .         
    \end{aligned}
\end{equation*}

Following the exact same arguments for $\sigma_X$, we find that,
\begin{align*}
    \mathbb{E} \left[ \sup_{t \leq T} |X_t^{1,N} - \bar{X}^{1,N}_t |^2 \right] \leq  C \int_0^T \mathbb{E} \left[ |X_t^{1,N} - \bar{X}^{1,N}_t |^2 + |Y_t^N - \bar{Y}_t|^2 + W_2(\mu_{\bar{X}_t^N},\mathcal{L}^1(\bar{X}_t^{1,N}))^2  \right] dt ,
\end{align*}
and similarly due to the regularity assumptions on $c$, $u$ and $\sigma_Y$,
\begin{align*}
    \mathbb{E} \left[ \sup_{t \leq T} |Y_t^N - \bar{Y}_t|^2 \right] \leq  C \int_0^T \mathbb{E} \left[ |X_t^{1,N} - \bar{X}^{1,N}_t |^2 + |Y_t^N - \bar{Y}_t|^2 + W_2(\mu_{\bar{X}_t^N},\mathcal{L}^1(\bar{X}_t^{1,N}))^2  \right] dt .
\end{align*}
Combining these two inequalities, by the monotonicity of the expectation and Gr\"onwall's lemma, we have
\begin{equation*}
    \mathbb{E} \left[ \sup_{t \leq T} |X_t^{1,N} - \bar{X}^{1,N}_t |^2 + \sup_{t \leq T} |Y_t^N - \bar{Y}_t|^2 \right] \leq C \int_0^T \mathbb{E} \left[W_2(\mu_{\bar{X}_t^N},\mathcal{L}^1(\bar{X}_t^{1,N}))^2  \right] dt .
\end{equation*}
Since $\bar{X}^{i,N}$ are conditionally i.i.d. given $\mathcal{F}^0$ with common distribution $\mathcal{L}^1(\bar{X}_t^{i,N}) = \mathcal{L}^1(\bar{X}_t^{1,N})$ we have by Theorem \ref{thm:GlivenkoCantelli} that for any $t \leq T$,
\begin{equation*}
    \mathbb{P}^0 \left[ \lim_{N \to \infty}  \mathbb{E}^1 \left[ W_2(\mu_{\bar{X}_t^N},\mathcal{L}^1(\bar{X}_t^{1,N}))^2  \right] = 0 \right] = 1 .
\end{equation*}

By the triangle inequality and since $(\bar{X}_t^{i,N})_{1 \leq i \leq N}$ are conditionally i.d.d. given $\mathcal{F}_t^Y$,
\begin{equation*}
    \begin{aligned}
    \mathbb{E}^1 \left[ W_2(\mu_{\bar{X}_t^N},\mathcal{L}^1(\bar{X}_t^{1,N}))^2  \right] &\leq C \mathbb{E}^1 \left[ W_2(\mu_{\bar{X}_t^N},\delta_0)^2 \right]  + C \mathbb{E}^1 \left[ W_2(\delta_0,\mathcal{L}^1(\bar{X}_t^{1,N}))^2  \right] \\
    &\leq C \mathbb{E}^1 \left[ |\bar{X}_t^{1,N}|^2 \right] .
    \end{aligned}
\end{equation*}
Taking expectations with respect to the measure $\mathbb{P}^0$, we obtain by Proposition \ref{prop:well_posedness} that $\mathbb{E}^0 \left[ \mathbb{E}^1 \left[ |\bar{X}_t^{1,N}|^2 \right] \right] = \mathbb{E} \left[ |\bar{X}_t^{1,N}|^2 \right]  < \infty $ and therefore by the dominated convergence theorem 
\begin{equation}
    \label{eq:Lim_q2}
    \lim_{N \to \infty}  \mathbb{E} \left[ W_2(\mu_{\bar{X}_t^N},\mathcal{L}^1(\bar{X}_t^{1,N}))^2  \right] = 0 .
\end{equation}

If $\mathbb{E}\left[|X_0|^q \right] < \infty$ for $q> 4$, then from Proposition \ref{prop:well_posedness} we have that $\mathbb{E}\left[\sup_{t\leq T} |\bar{X}_t|^q \right] < \infty$ and we can use Theorem \ref{thm:ExplicitRates} to obtain
\begin{equation}
    \label{eq:Lim_q4}
    \mathbb{E} \left[ W_2(\mu_{\bar{X}_t^N},\mathcal{L}^1(\bar{X}_t^{1,N}))^2  \right] \leq C \epsilon_N,
\end{equation}
where $\epsilon_N$ is as in the statement of the theorem. Since, for any $s,t \in [0,T]$,
\begin{equation*}
    \left| \mathbb{E} \left[ W_2(\mu_{\bar{X}_t^N},\mathcal{L}^1(\bar{X}_t^{1,N}))^2  \right] - \mathbb{E} \left[ W_2(\mu_{\bar{X}_s^N},g_s)^2  \right]  \right| \leq C \mathbb{E} \left[ |\bar{X}^1_t - \bar{X}^1_s  |^2 \right]^{\frac{1}{2}} \leq C |t-s|^{\frac{1}{2}} ,
\end{equation*}
the expressions \eqref{eq:Lim_q2}, for $q\geq 2$, and \eqref{eq:Lim_q4}, for $q >4$, are uniformly equicontinuous on $[0,T]$. Together with the pointwise convergence, the Arzel\`a-Ascoli theorem implies uniform convergence, which completes the proof.
\end{proof}


\subsection{Proofs for Section \ref{sec:MFLcontrol}}
\label{sec:Appendix2}

\begin{proof}[Proof of Lemma \ref{lemma:ConvMcKean}]
    Similarly to the proof of Theorem \ref{thm:PropChaos} we have by the definition of the strong solution, Jensen's inequality, the Burkholder-Davis-Gundy inequality and the Lipschitz assumptions on the coefficients that 
    \begin{equation}
    \label{eq:Lem_bound_X_in_proof}
    \begin{aligned}
        \mathbb{E} \left[ \sup_{t \leq T} W_2^2 (g_{N,t}, g_t) \right] & \leq
        \mathbb{E} \left[ \sup_{t \leq T} \left| \bar{X}_{N,t} - \bar{X}_{t} \right|^2 \right]   \\  
        & \leq C \mathbb{E} \left[ \int_0^T \left| \bar{X}_{N,t} - \bar{X}_{t} \right|^2 + \left| \bar{Y}_{N,t} - \bar{Y}_{t} \right|^2 dt \right] ,
    \end{aligned}
    \end{equation}
    and
    \begin{equation}
    \label{eq:eq:Lem_bound_Y_in_proof}
    \begin{aligned}
        \mathbb{E} \left[ \sup_{t \leq T} \left| \bar{Y}_{N,t} - \bar{Y}_{t} \right|^2  \right] \leq & C \mathbb{E} \left[ \int_0^T\left| \bar{X}_{N,t} - \bar{X}_{t} \right|^2 + \left| \bar{Y}_{N,t} - \bar{Y}_{t} \right|^2 dt\right] \\
        &+  C \mathbb{E} \left[ \int_0^T \left|u_N(t,\bar{Y}_{N,t},g_{N,t})  - u(t,\bar{Y}_t,g_t) \right|^2 dt  \right] .
    \end{aligned}
    \end{equation}
    For the last term, by the definitions of $u$ and $u_N$, we have
    \begin{align*}
     \mathbb{E} \left[ \int_0^T \left|u_N(t,\bar{Y}_{N,t},g_{N,t})  - u(t,\bar{Y}_t,g_t) \right|^2 dt  \right]  \leq C \mathbb{E} \left[ \int_0^T  \left| \bar{Y}_{N,t} - \bar{Y}_{t} \right|^2 + \left| \bar{X}_{N,t} - \bar{X}_{t} \right|^2  dt + R_N  \right] ,
    \end{align*}
    where 
    \begin{equation*}
        R_N := \int_0^T  \left| f_N(\bar{Y}_{t},g_{t}) - f(\bar{Y}_{t},g_{t}) \right|^2 +  \left| (h_N(t)- h(t) ) f(\bar{Y}_t,g_t) \right|^2 dt .
    \end{equation*}
   From the uniform convergence of $f_N$ to $f$ and $h_N$ to $h$, we have that $\lim_{N \to \infty}  R_N  = 0$ almost surely. Since both $h_N$ and $f_N$ are bounded, by the dominated convergence theorem $\lim_{N \to \infty} \mathbb{E} \left[R_N  \right] = 0$. Combining this, \eqref{eq:Lem_bound_X_in_proof}, \eqref{eq:eq:Lem_bound_Y_in_proof}, and  using Gr\"onwall's lemma the claim follows.
    
\end{proof}
\begin{proof}[Proof of Lemma \ref{lem:liminf}]
    Since $r$ is uniformly continuous and bounded there exists a concave modulus of continuity $w_1$ such that 
    \begin{equation}
    \begin{aligned}
         \mathbb{E} \left[ \int_0^T \left| r(\bar{Y}_{N,t},g_{N,t}) - r(\bar{Y}_t,g_t) \right|  dt \right] \leq  T w_1 \left( \mathbb{E} \left[  \sup_{t \leq T} \left| \bar{Y}_{N,t} - \bar{Y}_t \right|^2 + \sup_{t \leq T} W_2^2(g_{N,t},g_t)    \right]^{\frac{1}{2}} \right) , \label{eq:liminfStep1}
    \end{aligned}
    \end{equation}
    where we have applied Jensen's inequality and used the fact that $w_1$ is increasing and positive. From Lemma \ref{lemma:ConvMcKean} we have that the right-hand side of the inequality converges to zero as $N \to \infty$. In general if $|a|,|b| \leq c$, then $w_2(|a-b|) := 4c|a-b| - |a-b|^2 \geq |a^2-b^2|$. By the triangle inequality and since $h_N$ and $f_N$ are both bounded, 
    \begin{equation*}
        \begin{aligned}
        & \mathbb{E} \left[ \int_0^T \left| (h_N(t)f_N(\bar{Y}_{N,t},g_{N,t}))^2 - (h_N(t)f(\bar{Y}_t,g_t))^2  \right| dt \right] \\
        & \leq C w_2 \left( \mathbb{E} \left[ \sup_{t \leq T} \left| f_N(\bar{Y}_{N,t},g_{N,t}) - f_N(\bar{Y}_t,g_t)   \right|   \right]   \right)  + C \mathbb{E} \left[ \int_0^T w_2\left(  \left| f_N(\bar{Y}_t,g_t) - f(\bar{Y}_t,g_t) \right| \right) dt \right] .
        \end{aligned}
    \end{equation*}  
    The first term in the last inequality converges to zero by the Lipschitz assumptions and Lemma \ref{lemma:ConvMcKean}. Since $f_N$ converges uniformly to $f$, 
    \begin{equation*}
    \int_0^T w_2\left(  \left| f_N(\bar{Y}_t,g_t) - f(\bar{Y}_t,g_t) \right| \right) dt \to 0,
    \end{equation*}
    almost surely. As $f_N$ is bounded, we have 
    \begin{equation}
        \label{eq:liminfStep2}
        \lim_{N \to \infty} \mathbb{E} \left[ \int_0^T \left| (h_N(t)f_N(\bar{Y}_t,g_t))^2 - (h_N(t)f(\bar{Y}_t,g_t))^2  \right| dt \right] = 0 ,
    \end{equation}
    by the dominated convergence theorem. From the uniform convergence of $h_N$ to $h$ and the dominated convergence theorem
    \begin{equation*}
        \lim_{N \to \infty} \mathbb{E} \left[ \int_0^T \left( h_N(t) f(\bar{Y}_t,g_t)   \right)^2 dt  \right] = \mathbb{E} \left[ \int_0^T \left( h(t) f(\bar{Y}_t,g_t)   \right)^2 dt  \right] ,
    \end{equation*}
    and therefore we obtain
    \begin{equation}
        \label{eq:liminfStep3}
        \liminf_{N \to \infty} \mathbb{E} \left[ \int_0^T (h_N(t)f(\bar{Y}_t,g_t))^2 dt \right] \geq \mathbb{E} \left[ \int_0^T (h(t)f(\bar{Y}_t,g_t))^2 dt \right] .
    \end{equation}
    We conclude by combining \eqref{eq:liminfStep1}, \eqref{eq:liminfStep2} and \eqref{eq:liminfStep3}.
\end{proof}

\begin{remark}
    Since $(\bar{Y}_t,g_t)$ are not on a compact space, to guarantee the existence of a concave modulus of continuity $w_1$, $r$ needs to be bounded.
\end{remark}

\begin{proof}[Proof of Theorem \ref{thm:GammaConvergence}]
    To prove $\Gamma$-convergence we need to show the liminf inequality and the existence of a recovery sequence. We have, similar to the derivation of \eqref{eq:liminfStep1}, that
    \begin{align*}
        & \left|  \mathbb{E} \left[ \int_0^T  r(Y_{N,t},\mu_{X_t^N}) - r(\bar{Y}_{N,t},g_{N,t}) dt \right]   \right| \leq  T w_1 \left( \mathbb{E} \left[  \sup_{t \leq T} \left| Y_{N,t} - \bar{Y}_{N,t} \right|^2 + \sup_{t \leq T} |X_t^{1,N} - \bar{X}_{N,t} |^2    \right]^{\frac{1}{2}} \right) .
    \end{align*}
    Similarly,
    \begin{align*}
        & \left|  \mathbb{E} \left[ \int_0^T  (h_N(t)f_N(Y_{N,t}, \mu_{X_t^N}))^2 - (h_N(t)f_N(\bar{Y}_{N,t},g_{N,t}))^2 dt \right]   \right| \\
        & \leq T w_2\left( C \mathbb{E} \left[  \sup_{t \leq T} \left| Y_{N,t} - \bar{Y}_{N,t} \right|^2 + \sup_{t \leq T} |X_t^{1,N} - \bar{X}_{N,t} |^2    \right]^{\frac{1}{2}} \right) .
    \end{align*}
    By the propagation of chaos result from Theorem \ref{thm:PropChaos}, which applies since the assumptions in Lemma \ref{lemma:ConvMcKean} ensure that the uniform bounds and Lipschitz constants are independent of the sequence, we therefore have
    \begin{equation}
        \label{eq:Flim}
        \lim_{N \to \infty} \left| J^N(u_N) - J(u_N)  \right| = 0.
    \end{equation}
    From Lemma \ref{lem:liminf} and \eqref{eq:Flim}  we therefore have the liminf inequality
    \begin{equation*}
        \liminf_{N \to \infty} J^N(u_N) = \liminf_{N \to \infty} \left(J^N(u_N) - J(u_N) \right) + \liminf_{N \to \infty} J(u_N) \geq J(u) .
    \end{equation*}
    To show the existence of a recovery sequence let, for any $u$, $u_N$ be the constant sequence $u_N = u$. From \eqref{eq:Flim} we therefore have the pointwise convergence $J(u) = \lim_{N \to \infty} J^N (u)$.
\end{proof}

\subsection{Proofs for Section \ref{sec:NumAlg}}
\label{sec:Appendix3}

\begin{proof}[Proof of Lemma \ref{lem:UniBoundsLinear}]
For an $\mathcal{M}(\mathbb{R}^d)$ valued solution $U$ to \eqref{eq:FrozenPDE} let $Z_\delta(s) = T_\delta U(s)$. Following the methodology of \cite{Kurtz1999}, since $T_\delta \varphi \in C_b^2 (\mathbb{R}^d)$ for all $\varphi \in C_b^2 (\mathbb{R}^d)$, we then have
\begin{equation*}
    \begin{aligned}
        \inner{Z_\delta(t)}{\varphi}_{L^2} = & \inner{Z_\delta(0)}{\varphi}_{L^2}  - \sum_{i=1}^d \int_0^t \inner{   \partial_{x_i} T_\delta ( b_{i,s} U(s) ) }{\varphi}_{L^2}   ds \\
        & + \frac{1}{2} \sum_{i,j=1}^d  \int_0^t \inner{  \partial_{x_i} \partial_{x_j} T_\delta(a_{ij,s} U(s)) }{\varphi}_{L^2}   ds   ,
    \end{aligned}
\end{equation*}
by the symmetry of $G_\delta$, the definition of the PDE \eqref{eq:FrozenPDE} and integration by parts. By the chain rule 
\begin{equation*}
    \begin{aligned}
        \inner{Z_\delta(t)}{\varphi}^2_{L^2} = & \inner{Z_\delta(0)}{\varphi}_{L^2}^2 - 2 \sum_{i=1}^d \int_0^t \inner{Z_\delta(s)}{\varphi}_{L^2} \inner{ \partial_{x_i} T_\delta ( b_{i,s} U(s) ) }{\varphi}_{L^2}   ds \\
        & +  \sum_{i,j=1}^d  \int_0^t \inner{Z_\delta(s)}{\varphi}_{L^2} \inner{  \partial_{x_i} \partial_{x_j} T_\delta(a_{ij,s} U(s)) }{\varphi}_{L^2}   ds.
    \end{aligned}
\end{equation*}
Summing over a complete, orthonormal basis $\varphi$  of $L^2(\mathbb{R}^d)$ and taking expectations, we obtain
\begin{equation*}
\begin{aligned}
    \mathbb{E} \left[ \norm{Z_\delta(t)}^2_{L^2}   \right] = &  \norm{Z_\delta(0)}^2_{L^2} - 2 \sum_{i=1}^d \mathbb{E} \left[ \int_0^t \inner{Z_\delta(s)}{\partial_{x_i} T_\delta ( b_{i,s} U(s) )}_{L^2}  ds   \right] \\
    & + \sum_{i,j=1}^d \mathbb{E} \left[ \int_0^t \inner{Z_\delta(s)}{ \partial_{x_i} \partial_{x_j} T_\delta(a_{ij,s} U(s))}_{L^2}   ds   \right] .   
\end{aligned}
\end{equation*}
From \cite[Lemma 3.3.]{Kurtz1999} and \cite[Lemma 3.2.]{Kurtz1999}, which hold under Assumption \ref{ass:Bound}, we find that, for some constant $C$,
\begin{equation*}
    \mathbb{E} \left[ \norm{Z_\delta(t)}^2_{L^2}   \right] \leq  \norm{Z_\delta(0)}^2_{L^2} + C \int_0^t \mathbb{E} \left[ \norm{T_{\delta}(|U_s| ) }_{L^2}^2 \right] .
\end{equation*}
Applying Gr\"onwall's lemma to the above inequality gives $    \mathbb{E} \left[ \norm{Z_\delta(t)}^2_{L^2}   \right] \leq \norm{Z_\delta(0)}^2_{L^2} e^{Ct} $. Taking a complete orthonormal basis of $L^2(\mathbb{R}^d) $, $\{ \phi_j \}$ such that $\phi_j \in C_b(\mathbb{R}^d)$, we have from the fact that $\lim_{\delta \to 0} G_\delta(x) = \delta(x)$ and Fatou's lemma that
\begin{equation*}
    \begin{aligned}
        \mathbb{E} \left[ \sum_j \inner{\phi_j}{U(t)}^2   \right] & \leq  \mathbb{E} \left[\liminf_{\delta \to 0} \sum_j \inner{\phi_j}{Z_\delta(t)}^2_{L^2} \right] \leq \liminf_{\delta \to 0} \mathbb{E} \left[ \sum_j \inner{\phi_j}{Z_\delta(t)}^2_{L^2} \right] \\
        &\leq \liminf_{\delta \to 0} \norm{Z_\delta(0)}^2_{L^2} e^{Ct} \leq \norm{U(0)}^2_{L^2} e^{Ct} < \infty ,         
    \end{aligned}
\end{equation*}
from which we can reach our conclusion.
\end{proof}

\end{document}